\newtheorem{theorem}{Theorem}[section]
\newtheorem{lemma}[theorem]{Lemma}
\newtheorem{proposition}[theorem]{Proposition}
\newtheorem{claim}[theorem]{Claim}
\newtheorem{conj}[theorem]{Conjecture}
\numberwithin{equation}{section}
\def\eps{\varepsilon}
\DeclareMathOperator{\dist}{dist}
\title{Long properly coloured cycles in edge-coloured graphs.}
\author{Allan Lo}
\thanks{The research leading to these results was partially supported by EPSRC, grant no. EP/P002420/1.}
\address{School of Mathematics, University of Birmingham,\\Birmingham, B15 2TT, UK}
\email{s.a.lo@bham.ac.uk}
\date{\today}
\keywords{color degree, proper edge-coloring, cycle}
\begin{document}

\begin{abstract}
Let $G$ be an edge-coloured graph. 
The minimum colour degree $\delta^c(G)$ of $G$ is the largest integer $k$ such that, for every vertex~$v$, there are at least $k$ distinct colours on edges incident to~$v$.
We say that $G$ is properly coloured if no two adjacent edges have the same colour.
In this paper, we show that, for any $\varepsilon >0$ and $n$ large, every edge-coloured graph $G$ with $\delta^c(G) \ge  (1/2+\eps)n$ contains a properly coloured cycle of length at least $\min\{ n , \lfloor 2 \delta^c(G)/3 \rfloor\}$.
\end{abstract}

\maketitle

\section{Introduction}

An \emph{edge-coloured} graph is a graph~$G$ with an edge-colouring~$c$ of~$G$.
We say that $G$ is \emph{properly coloured} if no two adjacent edges of $G$ have the same colour. 
If all edges have the same (or distinct) colour, then $G$ is \emph{monochromatic} (or \emph{rainbow}, respectively).

Finding properly coloured subgraphs in edge-coloured graphs $G$ has a long and rich history. 
Grossman and H{\"a}ggkvist~\cite{MR701173} are the first to give a sufficient condition on the existence of properly coloured cycles in edge-coloured graphs with two colours.
Later on, Yeo~\cite{MR1438622} extended the result to edge-coloured graphs with any number of colours.
A natural question is to ask what guarantees the existence of properly coloured Hamiltonian paths and cycles.

In particular, the case when $G$ is an edge-coloured $K_n$ has been receiving the most attention. 
Given $k \in \mathbb{N}$, an edge-coloured graph $G$ is \emph{locally $k$-bounded} if for all vertices $v \in V(G)$, no colour appears more than $k$ times on the edges incident to~$v$ for all vertices~$v$.
A conjecture of Bollob\'as and Erd\H{o}s~\cite{MR0411999} states that every locally $(\lfloor n/2 \rfloor-1)$-bounded edge-coloured $K_n$ contains a properly coloured Hamilton cycle.
There is a series of partial results toward this conjecture by Bollob\'as and Erd\H{o}s~\cite{MR0411999}, Chen and Daykin~\cite{MR0422070}, Shearer~\cite{MR523092}, and Alon and Gutin~\cite{MR1610269}.
In~\cite{LoPCHCinKn} the author showed that the conjecture of Bollob\'as and Erd\H{o}s holds asymptotically, that is, for any $\eps >0$ and $n$ sufficiently large, every locally $(1/2 -\eps) n$-bounded edge-coloured $K_n$ contains a properly coloured Hamilton cycle.
A hypergraph generalisation of finding properly coloured Hamilton cycle in locally $k$-bounded edge-coloured complete graphs has also been studied by Dudek, Frieze and Ruci{\'n}ski~\cite{MR2900421} as well as Dudek and Ferrara~\cite{MR3338019}.
Recently, Sudakov and Volec~\cite{SudakovVolec} proved that every locally ${n}/(500 r^{3/4})$-bounded edge-coloured~$K_n$ contains all properly coloured graphs with at most $r$ paths of length two.
This proved a conjecture of Shearer~\cite{MR523092} as well as improves results of Alon, Jiang, Miller, Pritikin~\cite{MR2016871} and B{\"o}ttcher, Kohayakawa and Procacci~\cite{MR2925305}.
For a survey regarding properly coloured subgraphs in edge-coloured graphs, we recommend Chapter~16 of~\cite{MR2472389}.
Also see~\cite{KanoLiSurvey} for a survey regarding monochromatic and rainbow subgraphs in edge-coloured graphs.

Consider an edge-coloured (not necessarily complete) graph~$G$. 
Given a vertex $v \in V(G)$, the \emph{colour degree} $d^c_G(v)$ is the number of distinct colours of edges incident to $v$.
The \emph{minimum colour degree $\delta^c(G)$} is the minimum $d^c_G(v)$ over all vertices~$v$ in~$G$.
Li and Wang~\cite{MR2519172} showed that every edge-coloured graph~$G$ with $\delta^c(G) \ge d$ contains a properly coloured path of length $2 d$ or a properly coloured cycle of length at least $2 d/3$.
In~\cite{Lo10}, the author improved $2 d/3$ to $d +1$, which is best possible.
In the same paper, the author conjectured the following. 
\begin{conj} \label{conj}
Every edge-coloured connected graph $G$ with $\delta^c(G) \ge d$ contains a properly coloured Hamilton cycle or a properly coloured path of length $\lfloor 3 d/2 \rfloor$.
\end{conj}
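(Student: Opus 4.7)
My plan is a P\'osa-style rotation-extension argument adapted to properly coloured (PC) paths. Suppose for contradiction that the connected edge-coloured graph $G$ with $\delta^c(G)\ge d$ contains neither a PC Hamilton cycle nor a PC path of length $\lfloor 3d/2\rfloor$. Let $P=v_0v_1\cdots v_\ell$ be a longest PC path, so $\ell\le\lfloor 3d/2\rfloor-1$. The maximality of $P$ forces every edge $v_0u$ with $u\notin V(P)$ to have colour $c(v_0v_1)$, since otherwise $uv_0v_1\cdots v_\ell$ would be a strictly longer PC path. Combined with $d^c_G(v_0)\ge d$, this yields at least $d-1$ distinct colours different from $c(v_0v_1)$ on edges from $v_0$ into $V(P)\setminus\{v_1\}$; the analogous statement holds at $v_\ell$.

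I would next grow an endpoint set $S$ via rotations fixing $v_\ell$. Call $i$ \emph{rotatable at $v_0$} if $c(v_0v_i)\notin\{c(v_0v_1),c(v_{i-1}v_i)\}$: reversing the initial segment along $v_0v_i$ then yields a PC path of the same length with new endpoint $v_{i-1}$. Let $S$ be the closure of $\{v_0\}$ under such rotations, and let $T$ be the symmetric closure at $v_\ell$. Every $u\in S$ sends at least $d-1$ useful colours back along its own longest PC path. The counting step I have in mind aims to force $|S|,|T|\gtrsim d/2$ by charging each blocked rotation to a colour repetition along a short segment of $P$; given that, a pigeonhole on colours should produce an edge between some $u\in S$ and $u'\in T$ whose colour is compatible with both rotated paths, closing a PC cycle on $\ell+1$ vertices. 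By connectedness and one further extension step this cycle is either a PC Hamilton cycle or yields a PC path longer than $P$, the desired contradiction. The author's earlier $d+1$ bound for PC cycles supplies a base case, and the gap to $\lfloor 3d/2\rfloor$ is exactly what the iterated rotations must close.

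The principal obstacle, and presumably the reason Conjecture~\ref{conj} remains open, is that PC rotations are far more fragile than their uncoloured analogues. Each rotation must avoid two forbidden colours simultaneously, and the forbidden colour $c(v_0v_1)$ itself changes as soon as the first edge of $P$ is replaced; a single heavily used colour at $v_0$ can therefore block a large fraction of potential rotations, and $S$ may saturate well below $d/2$. Overcoming this seems to require a finer structural invariant on $S$---perhaps an auxiliary ``colour-type'' digraph on $S$ that records which colours block which rotations---rather than the set-theoretic closure used in the classical argument. The paper's main theorem, restricted to the dense regime $\delta^c(G)\ge(1/2+\eps)n$, almost certainly sidesteps these combinatorial difficulties via a regularity or absorbing approach, at the cost of settling for the weaker target length $\lfloor 2d/3\rfloor$ and only a PC cycle rather than either a PC Hamilton cycle or a long PC path.
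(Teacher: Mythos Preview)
This statement is Conjecture~\ref{conj}, which the paper leaves open in general; there is no proof of it in the paper to compare against. The paper's contribution is Theorem~\ref{thm:pccycle}, valid only for $\delta^c(G)\ge(1/2+\eps)n$, and it yields a properly coloured \emph{cycle} of length at least $\min\{\lfloor 3\delta^c(G)/2\rfloor,n\}$; the paper notes explicitly that this implies Conjecture~\ref{conj} in that regime. Your final paragraph misreads the result on two counts: the bound achieved is $\lfloor 3d/2\rfloor$, not $\lfloor 2d/3\rfloor$ (the $2\delta^c(G)/3$ in the abstract is a typo, contradicted by the statement of Theorem~\ref{thm:pccycle} itself), and a cycle of that length is not a weakening of the conjecture's alternative but at least as strong.

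The paper's route to Theorem~\ref{thm:pccycle} is not a direct P\'osa rotation but absorption plus stability: an absorbing cycle (Lemma~\ref{lma:abscycle}), a path-cover of the remainder in the non-extremal case (Lemma~\ref{lma:pathcovers}), and an explicit construction in the extremal case (Lemma~\ref{lma:extremal}). Rotation-type switchings do appear inside Section~\ref{sec:1-path-cycle}, but embedded in precisely the refinement you speculate is needed: the sets $X_1(H),X_2(H)$ record pairs (vertex, forbidden colour) reachable by one or two edge-switchings of a maximal $1$-path-cycle, and Lemma~\ref{lma:X1X2} shows that if these sets stay below $(\delta+\alpha)n$ then a near-extremal partition can be extracted. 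This dichotomy genuinely requires $\delta>1/2$, so your self-diagnosed obstruction---that a dominant colour at an endpoint can freeze the rotation closure $S$ well below $d/2$---is real and is exactly why the conjecture remains open below $n/2$. Your base-case remark does not help either: the earlier $d+1$ result produces a cycle, rotations preserve path length, and nothing in your sketch increases length beyond that of the initial longest path.
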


If this conjecture holds, then the bound is sharp by the following example. 
Let $d, n \in \mathbb{N}$ with $n \ge 3d/2$.
Let $c_1, c_2, \dots, c_d$ be distinct colours. 
Let $X, Y$ be disjoint sets of vertices such that $X = \{x_1, x_2,\dots, x_d\}$ and $|Y| = n- d$. 
For each $1 \le i \le d$, join $x_i$ to each vertex of $Y$ with colour $c_i$.
For $1 \le i< j \le d$, join $x_i$ to $x_j$ with a new distinct colour. 
Let $G$ be the resulting edge-coloured graph. 
Note that $G$ has $n$ vertices and $\delta^c(G) = d$.
Every properly coloured path in~$G$ with both endpoints in $Y$ must contain at least two vertices in~$X$.
Thus, every properly coloured path in~$G$ is of length at most $|X| + \lfloor |X|/2 \rfloor = \lfloor 3d/2 \rfloor$.

In~\cite{LoDirac}, the author proved that the conjecture holds when $d \ge (2/3+\eps)n$ for $\eps >0$ and $n$ large, that is, every edge-coloured graph~$G$ on $n$ vertices with $\delta^c(G) \ge (2/3 +\eps) n$ contains a properly coloured Hamilton cycle. 

In this paper, we prove the following results.
\begin{theorem} \label{thm:pccycle}
For $\eps >0$, there exists $n_0 \in \mathbb{N}$ such that every edge-coloured graph~$G$ on $n \ge n_0$ vertices with $\delta^c(G) \ge (1/2 + \eps) n$ contains a properly coloured cycle of length at least $\min\{ \lfloor 3 \delta^c(G) /2 \rfloor, n\}$.
\end{theorem}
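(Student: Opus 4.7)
The plan is to argue by contradiction. Write $d := \delta^c(G)$ and $L := \min\{\lfloor 3d/2\rfloor, n\}$, and suppose $G$ contains no properly coloured cycle of length at least $L$. The overall strategy is the standard stability dichotomy: either $G$ is close, in a suitable sense, to the extremal example given just after Conjecture~\ref{conj}, or it is not.

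In the non-extremal case I would start from a longest properly coloured cycle $C$ in $G$. By the earlier bound from~\cite{Lo10} we have $|C| \ge d + 1$, and by the contradiction hypothesis $|C| < L \le n$, so $U := V(G)\setminus V(C)$ is nonempty with $|U| \le n - d - 1 \le (1/2-\eps)n - 1$. Each $u \in U$ has colour degree $\ge d$, of which at most $|U|$ colours are used on edges inside $U$, so every $u \in U$ has at least $2\eps n$ colours available on edges to $V(C)$. I would then run a colour-aware rotation/insertion argument: try to find vertices $u_1,\ldots,u_k \in U$ and an arc $v_i v_{i+1}\ldots v_{i+k+1}$ of $C$ so that replacing $v_{i+1}\ldots v_{i+k}$ by $u_1\ldots u_k$ gives a longer properly coloured cycle, contradicting maximality. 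When $G$ is quantitatively far from the extremal structure, a counting argument over colour-compatible pivots should yield such a substitution.

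In the extremal case, $G$ is close in edit distance to a graph on $X \cup Y$ with $|X| = d \pm o(n)$ and a function $\phi \colon X \to \mathbb{N}$ such that almost every $X$-$Y$ edge $xy$ has colour $\phi(x)$, while edges inside $X$ carry ``generic'' colours. Here I would build the cycle directly: concatenating blocks of the form $x_{2k-1} x_{2k} y_k$ with $\phi(x_{2k-1}) \ne \phi(x_{2k})$ and pairwise distinct $y_k$ yields a properly coloured cycle of length exactly $\lfloor 3d/2\rfloor = L$, provided the edges chosen inside $X$ use pairwise distinct colours and no colour-clash occurs at the $y_k$ pivots. The small number of vertices not conforming to the $\phi$-pattern can be absorbed as exceptional blocks, or simply avoided because only roughly $2d/3$ vertices of $X$ and $d/2$ vertices of $Y$ are needed in total.

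The main obstacle is the colour-aware rotation step in the non-extremal case. A classical P\'osa rotation only needs to avoid repeating a vertex; here each pivot must additionally respect the two colours already used on $C$ at the pivot vertex, and at the endpoints of any replaced edge. Making the counting succeed seems to require showing that a suitably randomised rotation fails its colour constraints with small probability, which in turn depends on first ruling out several structured colour-patterns on $C$. This is precisely why the stability split is necessary: the extremal configuration is exactly the obstruction that prevents a uniform rotation lemma from working for all $d \ge (1/2+\eps)n$, and once it is excluded I expect the colour-degree reservoir of $2\eps n$ free colours at each $u \in U$ to drive the rotation through.
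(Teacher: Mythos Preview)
Your stability dichotomy matches the paper's overall plan, and your extremal-case sketch is in the right spirit (the paper's Lemmas~3.2--3.4 build the cycle as $x_1 b_1 y_1 x_2 b_2 y_2 \cdots$, two $A$-vertices per $B$-vertex, joined via a directed Hamilton cycle in an auxiliary tournament; the ``good edge'' condition that $c(x_{2k-1}x_{2k})$ avoids both $\phi$-colours is handled carefully there).

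The genuine gap is the non-extremal case. The paper does \emph{not} extend a longest properly coloured cycle by rotation; it uses absorption (Lemma~\ref{lma:abscycle}). First one builds a short absorbing cycle $C$ with the property that \emph{any} collection of at most $\gamma n$ vertex-disjoint properly coloured paths in $G\setminus V(C)$ can be spliced into $C$ to form one properly coloured cycle. Then, in the non-extremal case, one finds a path system covering $\min\{(3\delta+\beta)n/2,n\}$ vertices with few components (Lemma~\ref{lma:pathcovers}, proved via Lemma~\ref{lma:1-path-cycle}) and absorbs it. The path-cover lemma does use switching arguments, but on a maximal \emph{$1$-path-cycle} (one path plus long cycles), not on a single cycle; this extra slack is exactly what lets the rotation analysis (Proposition~\ref{rotation}, Lemmas~\ref{lma:X1X2} and~\ref{lma:H}) go through. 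Your plan of substituting arcs into a longest cycle runs straight into the obstacle you yourself flag: each pivot must respect two colours already on $C$ plus the endpoint colours of the replaced arc, and nothing in the sketch shows the $2\eps n$ colour reservoir at each $u\in U$ is enough. In the paper, the analogue of ``rotation is blocked'' is Lemma~\ref{lma:X1X2}, which extracts the extremal structure from a bound on the set $X_1(H)\cup X_2(H)$ of reachable parameter pairs after one or two switches; none of that machinery is present in your outline. Without either an absorbing cycle or a worked-out switching lemma of this kind, the non-extremal step is not a proof.
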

Note that Theorem~\ref{thm:pccycle} implies Conjecture~\ref{conj} when $d \ge (1/2+ \eps)n$ and $n$ large.
By analysing the proof of Theorem~\ref{thm:pccycle}, one might be able to prove Conjecture~\ref{conj} when $d \ge n/2$.
Therefore, it would be interesting to know whether Conjecture~\ref{conj} hold for $d <n/2$.

\section{Notation and sketch proof} \label{sec:notation}

For a graph~$G$, we denote $V(G)$ and $E(G)$ for the vertex set and edge set of $G$, respectively. 
Write $|G|$ for $|V(G)|$.
For (edge-coloured) graphs $G$ and $H$, we write $G-H$ for the graph with vertex set $V(G)$ and edge set $E(G) \setminus E(H)$.
For $W \subseteq V(G)$, we write $G \setminus W$ for the subgraph of $G$ induced by the vertex set $V(G) \setminus W$, and write $G \setminus H$ for $G \setminus V(H)$.
For disjoint $X,Y \subseteq V(G)$, let $G[X]$ be the (edge-coloured) subgraph induced by~$X$ and let $G[X,Y]$ be the induced bipartite subgraph with vertex classes $X$ and $Y$.
For a set of edges~$E$, we write $G \cup E$ for the graph with vertex set $V(G) \cup V(E)$ and edge set $E(G) \cup E$.
For a singleton set $\{v\}$, we sometimes write $v$ for short. 

For an edge-coloured graph~$G$, let $C(G) : = \{ c(uv) : uv \in E(G)\}$, that is, the set of colours appeared in~$G$. 
For a vertex $v \in V(G)$, let $C_G(v) : = \{ c(uv) : u \in N_G(v)\}$.
Thus $d^c_G(v) = |C_G(v)|$.
For $V \subseteq V(G)$, define $d^c_G(v,V) : = | C_{G[V \cup v]} (v)|$.
Let ${\bf x} = (x,c_x)$ be a pair with vertex $x \in V(G)$ and colour $c_x \in C_G(x)$.
We write $N_G({\bf x})$ be the set of vertices $v \in N_G(x)$ such that $c(xv) \ne c_x$.
For distinct $x,y \in V(G)$, we denote by \emph{${\rm dist}_G(x,y)$} the shortest distance between $x$ and~$y$. 
If $x$ and $y$ are not connected, then we say ${\rm dist}_G(x,y) = \infty$.
If $G$ is known from the context, then we omit $G$ in the subscript.

For a path $P = x_1 x_2 \dots x_k$ from $x_1$ to $x_{k}$ and a vertex $y \notin V(P)$, we write $Py$ for the path $x_1 x_2 \dots x_k y$.
If $P' = y_1 \dots y_{\ell}$ is a path with $y_1 = x_k$ and $V(P) \cap V(P') = \{x_k\}$, then we write $PP'$ for the concatenated path $x_1 x_2 \dots x_k y_2 \dots y_{\ell}$.

An edge-coloured graph~$G$ is \emph{critical}, if for every edge~$uv$, $d^c_{G}(u) > d^c_{G-uv}(u)$ or $d^c_{G}(v) > d^c_{G-uv}(v)$.
Note that if $G$ is critical, then any monochromatic subgraph~$H$ of~$G$ is a union of vertex-disjoint stars.
Since we are only concerning about properly coloured subgraphs, we may assume further that any two vertex-disjoint monochromatic component in~$G$ have distinct colours. 
Thus, from now on, we assume that every monochromatic subgraph~$H$ of any critical edge-coloured graph~$G$ is a star.

Let $F$ be a direct graph. 
For $u,v \in V(F)$, we write $uv$ for the directed edge from $u$ to~$v$.
For $Z,W \subseteq V(F)$, denote by $e_F(Z, W)$ the number of directed edges from $Z$ to $W$ in~$F$.

The constants in the hierarchies used to state our results are chosen from right to left.
For example, if we claim that a result holds whenever $0<1/n\ll a\ll b\ll c\le 1$ (where $n$ is the order of the graph), then there is a non-decreasing function $f:(0,1]\to (0,1]$ such that the result holds
for all $0<a,b,c\le 1$ and all $n\in \mathbb{N}$ with $b\le f(c)$, $a\le f(b)$ and $1/n\le f(a)$. 
Hierarchies with more constants are defined in a similar way.

\subsection{Sketch proof of Theorem~\ref{thm:pccycle}}

Here we present an outline of the proof of Theorem~\ref{thm:pccycle}, which naturally splits into three lemmas. 
First, we consider the case when $G$ is close to the extremal example in Section~\ref{sec:extremalcase}.
More precisely, for $\delta, \eps >0$, we say that an edge-coloured graph $G$ on $n$ vertices is \emph{$(\delta, \eps)$-extremal} if there exist disjoint $A,B \subseteq V(G)$ such that
\begin{enumerate}[label={\rm(A\arabic*)}]
	\item 
		\label{itm:A1}
	$|A| \ge (\delta - \eps) n $ and $|B| \ge (1-\delta - \eps )n$;
	\item 
		\label{itm:A2}
	for each $ a \in A$, there exists a distinct colour $c_a$ such that there are at least $|B| - \eps n$ vertices $b \in B$ such that $c(ab) = c_a$;
	\item 
		\label{itm:A3}
	for each $b \in B$, $d_G(b) \le (\delta + \eps ) n $ and $b$ has at least $|A|- \eps n$ neighbours $a \in A$ such that $c(ab) = c_a$. 
\end{enumerate}
Throughout this paper, we will always assume that $\eps \ll \delta$.
In this case, we will find a properly coloured cycle (of the desired length) directly (see Section~\ref{sec:extremalcase}). 
\begin{lemma} \label{lma:extremal}
Let $0 < 1/n \ll \eps \ll \delta \le 1$.
Let $G$ be a $(\delta, \eps)$-extremal critical edge-coloured graph on $n$ vertices with $\delta^c(G) \ge \delta n$.
Then $G$ contains a properly coloured cycle of length $\min\{ \lfloor 3 \delta n /2 \rfloor , n \}$.
\end{lemma}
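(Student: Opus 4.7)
The plan is to build the cycle explicitly as an alternating sequence of ``pairs'' of $A$-vertices and single $B$-vertex connectors, exploiting the bipartite structure given by $(\delta,\eps)$-extremality. Concretely, I aim for a cycle of the form $a_1 a_2 b_1 a_3 a_4 b_2 \cdots a_{m-1} a_m b_{m/2} a_1$, where each pair $a_{2i-1}a_{2i}$ is an edge of $G[A]$ with colour $\notin \{c_{a_{2i-1}}, c_{a_{2i}}\}$, and each connector satisfies $b_i \in B_{a_{2i}} \cap B_{a_{2i+1}}$ (writing $B_a := \{b \in B : c(ab)=c_a\}$). Since the $c_a$ are pairwise distinct and each pair-edge avoids the two special colours at its endpoints, consecutive edges of the cycle receive distinct colours, so it is properly coloured.

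To realise this, I first introduce the auxiliary graph $H$ on $A$ with edge set $\{aa' : aa'\in E(G),\ c(aa')\notin\{c_a,c_{a'}\}\}$, and aim to prove $\delta(H) \ge (\delta-O(\eps))n \ge |A|/2$. Indeed, $d^c_G(a) \ge \delta n$ gives at least $\delta n - 1$ colours at $a$ besides $c_a$; using \ref{itm:A3} (each $b\in B$ spends most of its $\le(\delta+\eps)n$ degree on special edges to $A$) and $|Z|\le 2\eps n$ for $Z := V(G) \setminus (A\cup B)$, all but $O(\eps n)$ of these colours must be witnessed by edges of $G[A]$; and by criticality each $c_{a'}$-star is centred at $a'$, so only $O(\eps n)$ such $A$-edges from $a$ fail the colour condition defining $H$. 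Dirac's theorem then produces a Hamilton cycle of $H$, and taking every second edge yields a matching $M$ of size $r := \lfloor |A|/2 \rfloor$.

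Next I assemble the cycle: order the pairs of $M$ cyclically as $(p_1, q_1), \ldots, (p_r, q_r)$, set $S_i := B_{q_i} \cap B_{p_{i+1}}$, and note $|S_i| \ge |B| - 2\eps n \gg r$ by \ref{itm:A2}--\ref{itm:A3}. Greedily pick distinct $b_i \in S_i$ and concatenate. This yields a properly coloured cycle $C_0$ of length exactly $3r$, which is within $O(\eps n)$ of the target $L := \min\{\lfloor 3\delta n/2\rfloor, n\}$.

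The main obstacle is closing the remaining $O(\eps n)$-gap between $|C_0|$ and $L$. I would do this by iterated single-vertex insertions: replace a cycle edge $xy$ with a path $xvy$ for some $v \notin V(C_0)$, provided the colours $c(xv), c(vy)$ differ from each other and from the colours of the two adjacent cycle edges. Candidate absorbing vertices come from the unused portion of $B$ (plentiful when $\delta < 2/3$) and from $Z$; the supply of useable insertion edges is $\Omega(\eps n^2)$ thanks to the non-special $A$--$B$ edges permitted by the slack in \ref{itm:A3}, while only $O(\eps n)$ insertions are needed. The most delicate sub-case is the Hamilton regime $\delta \ge 2/3$, where every vertex (in particular each $z\in Z$) must lie on the cycle; this forces a more careful absorption scheme, possibly lengthening some runs within $G[A]$ or swapping connectors $b_i$ to accommodate awkward $Z$-vertices while maintaining proper colouring. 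This exact-length extension, especially its Hamilton instance, is the main technical hurdle.
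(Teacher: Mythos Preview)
Your basic skeleton --- a cycle $a_1 a_2 b_1 a_3 a_4 b_2 \cdots$ with pair-edges in $G[A]$ avoiding both special colours and $B$-connectors joined by special-coloured edges --- is exactly the construction the paper uses (its Lemma~\ref{lma:extremalHcycle}). But two steps in your plan do not go through as written.

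\textbf{The minimum-degree claim for $H$ is false.} Criticality tells you the $c_{a'}$-star is centred at $a'$, but that does \emph{not} bound, for a fixed $a$, the number of $a'$ with $c(aa')=c_{a'}$. Nothing prevents a single vertex $a_0\in A$ from satisfying $c(a_0a')=c_{a'}$ for every $a'\in A\setminus\{a_0\}$; this is consistent with criticality and with \ref{itm:A1}--\ref{itm:A3}, and gives $d_H(a_0)=0$. What is true is an averaging statement: in the digraph $D$ on $A$ with $a\to a'$ iff $c(aa')\ne c_a$, every out-degree is at least $|A|-O(\eps n)$, so $D$ has at least $|A|(|A|-O(\eps n))/2$ two-cycles, whence all but $O(\sqrt{\eps}\,n)$ vertices of $A$ have $\ge |A|-O(\sqrt{\eps}\,n)$ good neighbours. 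The paper does precisely this (Lemma~\ref{lma:extremalH}) and discards the exceptional vertices. Your Dirac step survives after this fix, but only on a subset of size $(\delta-O(\sqrt{\eps}))n$, so the shortfall $L-|C_0|$ is $O(\sqrt{\eps}\,n)$ rather than $O(\eps n)$.

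\textbf{The extension to exact length is where essentially all the work lies, and your insertion scheme is not shown to succeed.} Note for instance that inserting an unused $b\in B$ into a pair-edge $a_{2i-1}a_{2i}$ via the special colours fails: the new edge $ba_{2i-1}$ would have colour $c_{a_{2i-1}}$, the same as the adjacent cycle edge $b_{i-1}a_{2i-1}$. So insertions must use non-special edges, whose supply is governed by the $O(\eps n)$ slack in \ref{itm:A3}, not by $|B|$; and once $\delta$ is near $2/3$ you must simultaneously absorb every vertex of $Z$. The paper handles all of this by a genuinely different mechanism. It refines $V(G)\setminus(A\cup B)$ into $Y$ (vertices with either $\ge 10\eps n$ colours into $B$ or $\ge 10\eps n$ non-special colours into $A$) and $Z$ (which behaves like $B$); it then locates matchings $M\subseteq G\setminus A$ and $M'\subseteq G[A,B\cup Z]$ of carefully calibrated sizes, extends $M\cup M'$ to a path system $\mathcal{P}$ with all endpoints in $B\cup Z$ that covers $Y$ (Claim~\ref{clm2}), contracts each path to a single $B$-like vertex, and only then applies the base cycle construction (Lemmas~\ref{lma:pathsystem} and~\ref{lma:matching1}). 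The resulting length is $\lfloor 3|A|/2+|M|+|M'|/2+|Y|-|V(M)\cap Y|/2\rfloor$, and a case analysis on the size of a maximum matching in $G\setminus A$ (using Vizing and K\"onig bounds) shows one can always choose $M,M'$ to make this at least $\lfloor 3\delta n/2\rfloor$. Your proposal would need to replicate this accounting, and at present it does not.
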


Note that Lemma~\ref{lma:extremal} does not require that $\delta \ge 1/2+\eps$. 
Thus Lemma~\ref{lma:extremal} implies that Conjecture~\ref{conj} holds if $G$ is $(\delta,\eps)$-extremal with $1/n \ll \eps \ll \delta\le 1$.

If $G$ is not close to the extremal, then we proceed using the \emph{absorption technique} introduced by R\"odl, Ruci\'{n}ski and Szemer\'{e}di~\cite{MR2399020}, which was used to tackle Hamiltonicity problems in hypergraphs.
The absorption technique has been adapted for finding properly coloured Hamilton cycles in~\cite{LoDirac,LoPCHCinKn}. 
First we find  a small `absorbing cycle'~$C$ in~$G$ using the following lemma, which is proved in Section~\ref{sec:abs}.

\begin{lemma} \label{lma:abscycle}
Let $0 < 1/n \ll \gamma \ll \eps < 1/2$.
Suppose that $G$ is an edge-coloured graph on $n$ vertices with $\delta^c(G) \ge (1/2 + \eps) n$.
Then there exists a properly coloured cycle~$C$ of length at most $ \eps n /2$ such that for any collection $P_1,\dots,P_k$ of vertex-disjoint properly coloured paths in $G \setminus V(C)$ with $k \le \gamma n$, there exists a properly coloured cycle with vertex set $V(C) \cup \bigcup_{1 \le i \le k} V(P_i)$.
\end{lemma}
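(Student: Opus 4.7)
The plan is to adapt the absorption method of R\"odl, Ruci\'nski and Szemer\'edi, in the colour-degree variant developed in~\cite{LoDirac, LoPCHCinKn}. For each ordered pair $(u,v)$ of distinct vertices of~$G$ I would define a $(u,v)$-\emph{absorber} to be a properly coloured subpath $a_1 a_2 a_3 a_4$ of~$G$ whose vertices avoid $\{u,v\}$, together with edges $a_2 u$ and $v a_3$, such that the local colour constraints $c(a_1 a_2)\ne c(a_2 u)\ne c(a_2 a_3)$ and $c(a_2 a_3)\ne c(a_3 v)\ne c(a_3 a_4)$ all hold. When embedded as a contiguous segment of a larger properly coloured cycle, such a gadget can absorb any properly coloured path $P = u w_1 \cdots w_\ell v$ satisfying $c(u w_1)\ne c(a_2 u)$ and $c(w_\ell v)\ne c(a_3 v)$, simply by replacing the middle edge $a_2 a_3$ with $a_2\, u\, P\, v\, a_3$. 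Because these last two colour conditions depend on~$P$, my strategy is to secure many absorbers per pair covering a variety of colour-types at the endpoints.

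Using $\delta^c(G)\ge (1/2+\eps)n$, I would show by direct counting that every pair $(u,v)$ has $\Omega(n^4)$ absorbers: pick $a_2\in N_G(u)$ and $a_3\in N_G(v)$ (each avoiding a bounded number of forbidden colours), choose a compatible colour for the central edge $a_2 a_3$, and extend to $a_1, a_4$ using the colour-degree hypothesis. A standard Chernoff-based random sampling (taking each absorber independently with probability $\Theta(\eps/n^3)$) then produces a family $\mathcal{F}$ of size $O(\eps n)$ such that every pair $(u,v)$ retains $\gg\gamma n$ absorbers in $\mathcal{F}$, with many of every relevant colour-type at the endpoints. I would then link the absorbers of $\mathcal{F}$ into a single properly coloured cycle~$C$ of length at most $\eps n/2$ by joining consecutive absorbers through short properly coloured connecting paths; these can be built greedily using the colour-degree hypothesis (essentially a properly coloured analogue of Dirac-type connectivity), while leaving each absorber's four-edge segment intact inside~$C$.

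Finally, given properly coloured paths $P_1,\dots,P_k$ in $G\setminus V(C)$ with $k\le\gamma n$, I would absorb them one at a time. For each $P_i$ with endpoints $u_i,v_i$, select an absorber in $\mathcal{F}\cap C$ for $(u_i,v_i)$ that has not been used yet and whose colours $c(a_2 u_i)$ and $c(a_3 v_i)$ differ from the first and last edge colours of~$P_i$; such an absorber exists because previous steps have consumed at most $k-1 = o(n)$ absorbers from the $\gg\gamma n$ available. Splicing $P_i$ in by the local move $a_2 a_3\mapsto a_2 u_i P_i v_i a_3$ preserves proper colouring at $a_2, u_i, v_i, a_3$ by the defining conditions of the absorber, and the remaining colour interactions are untouched; after $k$ steps the resulting cycle has vertex set $V(C)\cup\bigcup_i V(P_i)$, as required. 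The main obstacle is engineering the absorber to be sufficiently robust to arbitrary endpoint colours of $P_i$ — ensuring that the supply in $\mathcal{F}$ of each colour-type at every pair is large — and a secondary technical difficulty is the greedy linking step, which must simultaneously respect the $\eps n/2$ length budget and preserve the proper colouring of each embedded absorber.
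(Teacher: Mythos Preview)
Your overall framework is the right one and matches the paper's: build a family of small absorbing gadgets, link them into a short cycle, then splice the given paths in one by one. Two points, one a difference of design and one a genuine gap, are worth flagging.

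First, the paper sidesteps your colour-type bookkeeping entirely by indexing absorbers not by the \emph{endpoint pair} $(u,v)$ but by the \emph{first and last oriented edges} of the path to be absorbed. An absorbing path for $(x_1,x_2;y_1,y_2)$ is a properly coloured $z_1z_2z_3z_4$ such that $z_1z_2x_1x_2$ and $y_1y_2z_3z_4$ are both properly coloured; then for any properly coloured $P'=x_1x_2\cdots x_{\ell-1}x_\ell$ one gets that $z_1z_2x_1\cdots x_\ell z_3z_4$ is properly coloured automatically, because the awkward constraints $c(z_2x_1)\ne c(x_1x_2)$ and $c(x_{\ell-1}x_\ell)\ne c(x_\ell z_3)$ are part of the absorber's definition. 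This reduces the union bound to $O(n^4)$ ordered edge pairs rather than $O(n^4)$ quadruples $(u,v,c_u,c_v)$, and removes any need to argue that many colour-types survive the sampling. (Very short paths are handled separately: paths on at most three vertices are broken into isolated vertices and absorbed via a second gadget type, an absorbing path for a single vertex~$x$.) Your scheme is workable, but the edge-pair indexing is cleaner and is what the cited lemma from \cite{LoDirac} actually provides.

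Second, the linking step is not a greedy argument, and this is where your sketch has a real gap. What is needed is: given two prescribed vertices $x,y$, two forbidden colours $c_x,c_y$, and a set $W$ of $O(\eps n)$ vertices to avoid, find a properly coloured $x$--$y$ path of length $O_\eps(1)$ with $C_P(x)\ne\{c_x\}$ and $C_P(y)\ne\{c_y\}$. Under a minimum \emph{degree} condition $\delta(G)\ge(1/2+\eps)n$ this is a two-step triviality, but under a minimum \emph{colour} degree condition one must rule out the possibility that the properly coloured neighbourhoods from $x$ fail to expand because every extension repeats a colour. The paper proves this as a separate lemma (its Lemma~\ref{lma:connect}) via a reachability argument: it tracks, for each $\ell$, the set of vertices that are $(\le\ell,\eta^\ell)$-reachable and those that are \emph{strongly} reachable (meaning reachable even after forbidding any one terminal colour), shows a potential function $2|S_\ell|+|T_\ell|$ must grow by $\Omega(\eps^2 n)$ at each step until $|S_\ell|\ge n/2$, and derives a contradiction from an auxiliary digraph on the ``weakly but not strongly'' reachable vertices. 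This is the one nontrivial ingredient beyond Chernoff-style sampling, and your proposal should either invoke it or sketch a comparable argument.
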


Remove the vertices of $C$ from $G$ and call the resulting graph~$G'$. 
Since $G$ is not extremal, neither is~$G'$.
(Indeed, if $G'$ is $(\delta, \eps)$-extremal with vertex subsets $A,B$, then $G$ is $(\delta, 2 \eps)$-extremal with vertex subsets $A,B$ as $\eps \ll 1$.)
We find vertex-disjoint properly coloured paths by the next lemma (which is implied by Lemma~\ref{lma:1-path-cycle}).

\begin{lemma} \label{lma:pathcovers}
Let $0 < 1/n \ll \beta \ll \eps \ll 1/2 < \delta$.
Suppose that $G$ is a critical edge-coloured graph on $n$ vertices with $\delta^c(G)  \ge \delta n+1 $.
If $G$ is not $(\delta, \eps)$-extremal, then $G$ contains vertex-disjoint properly coloured paths $P_1, \dots, P_{k}$  with $k \le 100 \beta^{-1}$ covering $\min \{ ( 3 \delta + \beta )n/2, n \}$ vertices. 
\end{lemma}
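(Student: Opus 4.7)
The plan is to argue by contradiction. Suppose every collection of at most $100 \beta^{-1}$ vertex-disjoint properly coloured paths in $G$ covers fewer than $m_0 := \min\{(3\delta + \beta)n/2, n\}$ vertices. Choose such a collection $\mathcal{P} = \{P_1, \dots, P_k\}$ covering the maximum number of vertices $m < m_0$, and among all such collections take one with $k$ minimum. Let $U := V(G) \setminus V(\mathcal{P})$, so $|U| \ge \beta n / 2$ when $\delta < 2/3$ and $U$ is nonempty when $\delta \ge 2/3$. The aim is to show that any obstruction to improving $\mathcal{P}$ forces $G$ to satisfy~\ref{itm:A1}--\ref{itm:A3}, contradicting the non-extremality hypothesis.

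Next I would set up the rotation machinery using the pair notation of Section~\ref{sec:notation}. For each endpoint $x$ of a path $P_i$, let $c_x$ be the colour of the unique edge of $P_i$ at $x$ and set $\mathbf{x} = (x, c_x)$; the candidates for extending $P_i$ past $x$ lie in $N_G(\mathbf{x})$, which has size essentially $d^c_G(x) \ge \delta n + 1$ after accounting for repeated colours (controlled using criticality: every monochromatic subgraph of $G$ is a star). A standard rotation along $P_i$ turns each admissible edge from $\mathbf{x}$ into the interior of $P_i$ into a new properly coloured path on the same vertex set with a different endpoint pair; iterating yields a large set $\mathcal{E}_i$ of reachable endpoint pairs for $P_i$, of size $(\delta - o(1))n$.

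The contradiction is then driven by a trichotomy applied to the auxiliary directed graph $F$ that records extensions $\mathbf{v} \to u$ for $\mathbf{v} \in \bigcup_i \mathcal{E}_i$ and $u \in N_G(\mathbf{v})$. \textbf{First}, if $e_F(\mathcal{E}_i, U) > 0$ for some $i$, we extend $P_i$ into $U$, strictly increasing coverage and contradicting maximality of $m$. \textbf{Second}, if two pairs from different $\mathcal{E}_i, \mathcal{E}_j$ can be joined by a properly coloured edge or by a short properly coloured bridge through $U$, we merge the two paths, reducing $k$ without decreasing coverage and contradicting minimality of $k$. \textbf{Third}, assuming neither improvement is available, the extensions of the reachable pairs are confined to $V(\mathcal{P}) \setminus V(P_i)$ and obey severe colour restrictions; counting edges of $F$ together with criticality forces the pair structure to concentrate on a single distinguished colour $c_a$ per ``endpoint-type'' vertex $a$, producing sets $A, B$ with $|A| \approx \delta n$, $|B| \approx (1 - \delta) n$, and the dominant-colour pattern required in~\ref{itm:A2} and~\ref{itm:A3}.

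The main obstacle is executing the third step cleanly. The subtlety is that colour degree does not translate into ordinary degree -- a single colour can saturate many edges from the same vertex -- so the counting inside $F$ must distinguish carefully between pairs $(v, c)$ and their base vertices~$v$; this is precisely where the critical-graph reduction (monochromatic components are stars, and distinct components carry distinct colours) does its work. A further delicacy is that the available slack is only $\beta n / 2$, so every inequality chain must be carried out with error of order $o(\eps n)$ rather than $O(\beta n)$; calibrating the hierarchy $\beta \ll \eps$ so that the extremal bounds $|A| \ge (\delta - \eps)n$ and $|B| \ge (1 - \delta - \eps)n$ emerge with the correct constants, rather than weakened versions, is where the argument is most delicate.
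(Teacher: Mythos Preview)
Your proposal diverges from the paper's route and has a real gap in the third step.

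\textbf{What the paper does.} The paper does not prove Lemma~\ref{lma:pathcovers} directly; it is deduced in one line from Lemma~\ref{lma:1-path-cycle}, which finds a properly coloured $1$-path-cycle (one path plus cycles, each cycle of length $\ge \beta n/100$) of the right size whenever $G$ is not $(\delta,\eps)$-extremal. Deleting one edge from each cycle yields the required path system with $k \le 100\beta^{-1}$. The substance lies in Lemma~\ref{lma:1-path-cycle}, and crucially that proof runs a rotation analysis on a \emph{single} path component (sets $X_1(H), X_2(H), Y_1(H), Y_2(H)$ of reachable endpoint pairs under one or two edge switches), not on a multi-path system.

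\textbf{The gap.} Your step three asserts that the obstruction to extending or merging directly yields sets $A,B$ with $|A|\approx \delta n$ and $|B|\approx (1-\delta)n$. This is the place where the paper's argument is far more elaborate than you anticipate. The rotation/counting analysis (Lemma~\ref{lma:X1X2} and Lemma~\ref{lma:AB}) only produces a ``$B$-like'' set $Z^*$ of size roughly $(2\delta-1)n$, because the confinement one obtains is $\delta n$ reachable pairs whose $\delta n$ neighbours must sit inside a set of size about $3\delta n/2$; the overlap forcing dominant colours yields $(2\delta-1)n$, not $(1-\delta)n$. For $\delta$ close to $1/2$ this is far too small to certify $(\delta,\eps)$-extremality. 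The paper bridges this via an \emph{iterative} argument (Lemma~\ref{lma:1-path-cycle2}): one removes $Z^*$, observes that the remaining graph has a strictly larger effective colour-degree ratio $\delta' = (\delta - o(1))/(3/2-\delta) > \delta$, and re-applies the whole analysis on $G\setminus Z^*$; after $s_\phi(\delta)$ rounds one reaches the regime $\delta' \ge 2/3-o(1)$ where $(2\delta'-1)\ge (1-\delta')$ and extremality follows. Your sketch contains no hint of this bootstrapping, and without it the calibration you worry about (``error of order $o(\eps n)$'') cannot close.

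A secondary issue: by working with a genuine path system you carry $2k$ endpoints and must control rotations on several paths simultaneously, including interactions where a rotation on $P_i$ lands in $V(P_j)$. The paper avoids this entirely by working with a $1$-path-cycle, so that rotations happen on a single path and cycles are absorbed into it one at a time (Proposition~\ref{rotation}, Lemma~\ref{lma:parameter}). If you insist on the path-system framework you will need an analogue of this absorption, or else the sets $\mathcal{E}_i$ will leak into one another in ways your trichotomy does not cover.
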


We now prove Theorem~\ref{thm:pccycle} using Lemmas~\ref{lma:extremal}--\ref{lma:pathcovers}.

\begin{proof}[Proof of Theorem~\ref{thm:pccycle}]
Without loss of generality, we may assume that $G$ is critical edge-coloured with $\delta^c(G) = \delta n$ and that $\eps$ is sufficiently small. 
Let $\gamma, \eps'$ be such that $1/n \ll \gamma \ll \eps \ll \eps' \ll 1/2 <\delta$.

Apply Lemma~\ref{lma:abscycle} and obtain a properly coloured cycle $C$ of length at most $\eps n /2$ such that for any collection $P_1,\dots,P_k$ of vertex-disjoint properly coloured paths in $G \setminus V(C)$ with $k \le \gamma n$, there exists a properly coloured cycle with vertex set $V(C) \cup \bigcup_{1 \le i \le k} V(P_i)$.

Let $G':= G \setminus C$, $n' := |G'|$ and $\delta' :=  (\delta n - |C|-1)/n'$.
Note that $\delta^c (G) \ge \delta' n' +1$ and $1/n' \ll \eps \ll \eps' \ll 1/2 < \delta'$.
If $G'$ is not $(\delta', \eps')$-extremal, then apply Lemma~\ref{lma:pathcovers} (with $\eps, \eps',\delta', n'$ playing the roles of $\beta,\eps,\delta,n$) and obtain vertex-disjoint properly coloured paths $P_1, \dots, P_{k}$ such that  $k \le 100 \eps^{-1} \le \gamma n $ and 
\begin{align*}
\bigcup_{i \le k } |V (P'_i) |  \ge \min \{ 3(\delta - |C|-1)n + \eps n' ) /2 , n - |C| \} \ge  \min \{ 3 \delta n/2 , n \}- |C|
\end{align*}
as $|C| \le \eps n/2 \le \eps n'$.
Thus, by the property of~$C$, there exists a properly coloured cycle $C'$ with vertex set $V(C) \cup \bigcup_{ i \le k} V(P'_i)$.
So $|C'| \ge \min \{ 3 \delta n/2 , n \}$ as desired.

On the other hand, if $G'$ is $(\delta', \eps')$-extremal, then there exist disjoint $A,B \subseteq V(G') = V(G) \setminus V(C) $ satisfying 
\begin{enumerate}[label={\rm(A\arabic*)}]
	\item 

	$|A| \ge (\delta' - \eps') n' \ge ( \delta - 2 \eps' ) n$ and $|B| \ge (1-\delta' - \eps' )n' \ge (1- \delta - 2 \eps' )n$;
	\item
	for each $ a \in A$, there exists a colour $c_a$ such that there are at least $|B| - \eps' n' \ge |B| - 2 \eps' n$ vertices $b \in B$ such that $c(ab) = c_a$;
	\item
	for each $b \in B$, 
	\begin{align*}
	d_G(b) \le d_{G'}(b) + |C| 
	\le (\delta' + \eps' ) n' + |C| 
	= \delta n -1  + \eps' n' 
	< (\delta + 2 \eps')n
	\end{align*}
	and $b$ has at least $|A|- \eps' n' \ge |A| - 2\eps' n $ neighbours $a \in A$ such that $c(ab) = c_a$. 
\end{enumerate}
Therefore $G$ is $(\delta, 2\eps')$-extremal.
By Lemma~\ref{lma:extremal}, $G$ contains a properly coloured cycles of length at least $\min \{ \lfloor 3 \delta n/2 \rfloor, n \}$.
\end{proof}

\section{Extremal case} \label{sec:extremalcase}

In this section, we prove Lemma~\ref{lma:extremal}, that is, Theorem~\ref{thm:pccycle} when $G$ is critical and $(\delta, \eps)$-extremal.
We would need the following definition. 
Let $G$ be an edge-coloured graph on $n$ vertices.
Let $A,B \subseteq V(G)$ be disjoint.
We say that the ordered pair $(A,B)$ is \emph{$\eps$-extremal} if the following holds:
\begin{enumerate}[label={\rm(E\arabic*)}]
	\item for each $ a \in A$, there exists a distinct colour $c_a$; \label{itm:E1}
	\item for each $a \in A$, there are at least $|B| - \eps n$ vertices $b \in B \cap N(a)$ such that $c(ab) = c_a$, and at least $|A| - \eps n $ vertices $a' \in A \cap N(a)$ such that $c_a \ne c(aa') \ne c_{a'}$; \label{itm:E2}
	\item for each $ b \in B$, there are at least $|A| - \eps n$ vertices $a \in A \cap N(b)$ such that $c(ab) = c_a$. \label{itm:E3}
\end{enumerate}
Next we show that if $G$ is $(\delta, \eps)$-extremal, then there exists $4 \sqrt{ \eps }$-extremal pair in~$G$.

\begin{lemma} \label{lma:extremalH}
Let $0 < 1/n \ll \eps \ll 1$ and let $\delta > 4 \sqrt{\eps}$.
Let $G$ be a critical edge-coloured graph on $n$ vertices with $\delta^c(G) \ge \delta n$.
Suppose that $G$ is $(\delta, \eps)$-extremal.
Then there exist disjoint $A,B \subseteq V(G)$ such that $(A,B)$ is $4 \sqrt{\eps}$-extremal, $|A| \ge (\delta - 4 \sqrt{\eps}) n $, $ |B|  \ge (1-\delta - \eps )n$ and, for each $b \in B$, $d_{G} ( b ) \le ( \delta + \eps ) n$.
\end{lemma}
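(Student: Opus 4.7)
The plan is to set $B := B_0$ and to define $A$ by removing a small set of ``bad'' vertices from $A_0$, where $(A_0,B_0)$ is the pair witnessing the $(\delta,\eps)$-extremality of $G$. With $B=B_0$, the bound $|B|\ge (1-\delta-\eps)n$, the degree bound on $b\in B$, \ref{itm:E1}, and the first half of \ref{itm:E2} are all immediate from \ref{itm:A1}--\ref{itm:A3}; and \ref{itm:E3} will follow from \ref{itm:A3} after absorbing an error of $|A_0\setminus A|$. The only substantive task is therefore the second half of \ref{itm:E2}: for each $a\in A$, at least $|A|-4\sqrt{\eps}n$ vertices $a'\in A\cap N(a)$ should satisfy $c_a\ne c(aa')\ne c_{a'}$.

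The quantitative core is a single degree count at each $a\in A_0$. Write $s_a := |\{a'\in A_0\setminus a: c(aa')=c_a\}|$. I would decompose the edges at $a$ into three disjoint contributions: the at least $|B_0|-\eps n$ edges of colour $c_a$ from $a$ to $B_0$ guaranteed by \ref{itm:A2}, the $s_a$ edges of colour $c_a$ from $a$ to $A_0\setminus a$, and at least $\delta n-1$ further edges, one for each of the remaining colours at $a$ (using $d^c_G(a)\ge \delta n$). Combining this with $d_G(a)\le n-1$ and $|B_0|\ge(1-\delta-\eps)n$ forces $s_a\le 2\eps n$. The same estimate also gives $d_G(a)\ge n-2\eps n-1$, hence $a$ has at most $2\eps n$ non-neighbours in $V(G)$.

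Let $t_a := |\{a'\in A_0\setminus a: c(aa')=c_{a'}\}|$. Because the colours $c_a$ are pairwise distinct, $s_a+t_a$ counts exactly the neighbours of $a$ in $A_0$ violating the colour condition, and swapping the roles of $a$ and $a'$ gives $\sum_{a\in A_0}t_a=\sum_{a\in A_0}s_a\le 2\eps n\,|A_0|$, whence $\sum_{a\in A_0}(s_a+t_a)\le 4\eps n\,|A_0|$. Define
\[ A := \{a\in A_0: s_a+t_a\le 3\sqrt{\eps}\,n\}. \]
Markov's inequality yields $|A_0\setminus A|\le \tfrac{4}{3}\sqrt{\eps}\,n$, so $|A|\ge (\delta-4\sqrt{\eps})n$. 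For $a\in A$, at most $s_a+t_a\le 3\sqrt{\eps}n$ vertices of $A\cap N(a)$ violate the colour condition and $a$ has at most $2\eps n$ non-neighbours in $A$, so the number of good $a'\in A\cap N(a)$ is at least $|A|-1-2\eps n-3\sqrt{\eps}n\ge |A|-4\sqrt{\eps}n$. A similar one-line accounting, losing only $|A_0\setminus A|$ neighbours, establishes \ref{itm:E3}.

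I do not foresee a conceptual obstacle. The sole step that really uses the extremal hypothesis is the uniform bound $s_a\le 2\eps n$, forced by pushing the degree sum at $a$ against the trivial ceiling $n-1$; the rest is a one-paragraph Markov / swap-of-variables argument, where the only care required is to track the three scales $1/n\ll \eps\ll \sqrt{\eps}\ll 1$ so that each clause of \ref{itm:E2} and of \ref{itm:E3} closes with slack $4\sqrt{\eps}n$.
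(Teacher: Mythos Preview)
Your proposal is correct and follows the same overall architecture as the paper: take $B=B_0$ unchanged, prune a small set of ``bad'' vertices from $A_0$ via an averaging/Markov argument, and verify \ref{itm:E1}--\ref{itm:E3}. The only real difference is in the core estimate. The paper frames the count with an auxiliary digraph $D$ on $A_0$ (an arc $a\to a'$ when $c(aa')\ne c_a$), bounds each outdegree from below by observing that at most $3\eps n+1$ colours from $a$ escape to $V(G)\setminus A_0$, and then obtains the number of ``good'' edges as the number of $2$-cycles in~$D$. You instead bound $s_a$ uniformly by pushing the total degree at $a$ against $n-1$, then obtain the mirror quantity $t_a$ via the swap identity $\sum_a t_a=\sum_a s_a$. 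Both routes arrive at the same Markov-style pruning with equivalent numerics; your degree-ceiling argument is slightly more elementary, while the paper's $2$-cycle count avoids ever looking at $d_G(a)$ and uses only colour degree.
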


\begin{proof}
Let $\eps' := 4 \sqrt{\eps} $.
Since $G$ is $(\delta, \eps)$-extremal, there exist disjoint $A^* , B^* \subseteq V(G)$ satisfying~\ref{itm:A1}--\ref{itm:A3}.

Note that $|V(G) \setminus (A^* \cup B^*)| \le 2 \eps n$.
We say that an edge $aa'$ in $G[A^*]$ is \emph{good} if $c_a \ne c(a a') \ne c_{a'}$.
We bound the number of good edges from below as follows.
Define a directed graph~$D$ on~$A^*$ such that there is a directed edge from $a$ to $a'$ if and only if $c_a \ne c(a a')$.
For each $a \in A^*$, $a$ sends at most $1+\eps n + |V(G) \setminus (A^* \cup B^*)| \le 3 \eps n+1$ distinct colours (including the colour $c_a$) to $V(G) \setminus A^*$ by~\ref{itm:A2}.
So the outdegree of~$a$ in~$D$ is at least $\delta n - 3 \eps n-1 \ge |A^*| - 5 \eps n -1$.
Since the number of good edges equals the number of $2$-cycles in~$D$, the number of good edges is at least $( |A^*| - 5 \eps n  -1) |A^*| - \binom{|A^*|}2= |A^*| ( |A^*| - 10 \eps n-1)/2$.
Let $A'$ be the set of $a \in A^*$ that is incident with at most $ |A^*| - \eps' n $ good edges.
Note that $|A'| \le 3 \sqrt{\eps} n $.

Let $A := A^* \setminus A'$.
Thus $|A| \ge |A^*| - 3 \sqrt{\eps} n \ge (\delta - \eps') n$ by~\ref{itm:A1}.
Moreover, every $a \in A$ is incident with at least $|A| - \eps' n $ good edges in~$G[A]$ implying~\ref{itm:E2}.
Set $B := B^*$. 
So $|B| \ge (1- \delta- \eps) n $.
Also, \ref{itm:A3} implies that~\ref{itm:E3} holds and that, for each $b \in B$, $d_{G} ( b ) \le ( \delta + \eps ) n$.
Therefore $(A,B)$ is $\eps'$-extremal.
\end{proof}

In the next two lemma, we find properly coloured cycles spanning $A \cup B$, when $(A, B)$ is $\eps$-extremal.

\begin{lemma} \label{lma:extremalHcycle}
Let $\eps < 1/36$.
Let $G$ be an edge-coloured graph on $3m$ vertices.
Suppose that there is a partition $A,B$ of $V(G)$ such that $(A,B)$ is $\eps$-extremal, $|A| = 2m$ and $|B| = m $.
Then $G$ has a properly coloured Hamilton cycle. 
\end{lemma}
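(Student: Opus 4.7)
The plan is to construct a properly coloured Hamilton cycle with the rigid block structure
\[
u_1 v_1\, b_{\sigma(1)}\, u_2 v_2\, b_{\sigma(2)}\, \dots\, u_m v_m\, b_{\sigma(m)}\, u_1,
\]
where $\{u_1v_1,\dots,u_mv_m\}$ is a perfect matching of $A$ using only \emph{good} edges (those $aa' \in E(G[A])$ with $c_a \ne c(aa') \ne c_{a'}$), and every $A$--$B$ crossing is a \emph{typical} edge (meaning its colour is $c_a$, where $a$ is the $A$-endpoint). Given this structure, proper colouring follows automatically: at each $b_{\sigma(i)}$ the two incident cycle-edges have colours $c_{v_i}$ and $c_{u_{i+1}}$, which differ by \ref{itm:E1}; at each $u_i$ (and symmetrically $v_i$) they have colours $c_{u_i}$ from typicality and some colour outside $\{c_{u_i},c_{v_i}\}$ from goodness.

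To produce the matching, I observe that by \ref{itm:E2} the subgraph of good edges in $G[A]$ has minimum degree at least $|A| - \eps n = (2-3\eps)m$, which comfortably exceeds $|A|/2 = m$ since $\eps < 1/36$, so Dirac's theorem supplies a Hamilton cycle in this subgraph and hence a perfect matching $M = \{p_1,\dots,p_m\}$ with $p_i = \{u_i, v_i\}$. Next I set up an auxiliary balanced bipartite graph $H$ on parts $\mathcal{P} := \{p_1,\dots,p_m\}$ and $B$, with $p_i b \in E(H)$ exactly when \emph{both} $u_i b$ and $v_i b$ are typical edges of $G$. This stronger condition (rather than just requiring one endpoint) is what frees me from having to track pair orientations later.

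Using \ref{itm:E2}, each pair $p_i$ fails the typicality requirement for at most $2\eps n = 6\eps m$ vertices of $B$; using \ref{itm:E3}, each $b \in B$ is non-typical to at most $\eps n = 3\eps m$ vertices of $A$, hence to at most that many pairs of $M$. Therefore $\delta(H) \ge (1-6\eps)m > m/2$ since $\eps < 1/36$, and the bipartite form of Dirac's theorem (Moon--Moser) delivers a Hamilton cycle in $H$, which after relabelling I may write as $p_1 b_{\sigma(1)} p_2 b_{\sigma(2)} \cdots p_m b_{\sigma(m)} p_1$. Lifting to $G$ by expanding each $p_i$ to the traversal $u_i v_i$ (in the fixed order), the definition of $H$ guarantees that both $b_{\sigma(i-1)} u_i$ and $v_i b_{\sigma(i)}$ are typical edges of $G$ for every $i$, so all edges of the lifted cycle are present and the colour verification from the first paragraph finishes the proof.

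The only delicate step is the minimum-degree bookkeeping for $H$, since a looser definition of the auxiliary graph would force me to choose a consistent orientation on each pair and then invoke a Hamilton cycle theorem for a more complicated auxiliary structure. With the \emph{both-endpoints-typical} definition the orientation issue evaporates and bipartite Dirac applies directly, turning what could have been the main obstacle into a short calculation.
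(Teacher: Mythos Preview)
Your proof is correct and produces the same $AAB$-block Hamilton cycle as the paper, but assembles it in the opposite order. The paper first splits $A$ arbitrarily into halves $X$ and $Y$, uses Hall's theorem to find typical-edge perfect matchings $M_X \subseteq G[X,B]$ and $M_Y \subseteq G[Y,B]$ (so that $M_X \cup M_Y$ is a set of $m$ properly coloured paths $x_i b_i y_i$), and then links these paths through good $A$-edges by applying Ghouila-Houri's theorem to the auxiliary digraph on $\{z_1,\dots,z_m\}$ with $z_i \to z_j$ whenever $y_i x_j$ is good. You instead first fix the good $A$-edges via a Dirac matching and only afterwards attach $B$ through a bipartite Hamilton cycle. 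Your \emph{both-endpoints-typical} definition of $H$ is precisely the device that lets you sidestep the orientation issue the paper handles with a digraph; the paper's arbitrary split $A = X \cup Y$ plays the analogous role there. Both routes use the same arithmetic slack (degree well above $m/2$ in the auxiliary graph), and neither is materially shorter, though yours has the mild advantage of invoking only undirected Hamiltonicity results.
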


\begin{proof}
Partition $A$ into $X$ and $Y$ each of size~$m$.
Let $H_X$ be the subgraph of $G[X,B]$ induced by edges of colour in~$\{ c_a : a \in A \}$.
By \ref{itm:E2} and~\ref{itm:E3}, $H_X$ is a bipartite graph with $\delta(H_X) \ge m - 3 \eps m$.
Hence by Hall's theorem, there exists a perfect matching $M_X$ in~$H_X$.

Similarly, let $H_Y$ be the subgraph of $G[Y,B]$ induced by edges of colour in~$\{ c_a : a \in A \}$ and there exists a perfect matching $M_Y$ in~$H_Y$.
Note that $M_X \cup M_Y$ is a union of $m$ vertex-disjoint path of length~$2$ each with midpoint in~$B$.
By \ref{itm:E1}, $M_X \cup M_Y$ is properly coloured.
Let $M_X \cup M_Y = \{ x_ib_iy_i : x_i \in X, b_i \in B, y_i \in Y \text{ and } i \le m\}$.

Now define an oriented graph $F$ on vertex set $Z = \{ z_1, \dots, z_m\}$ such that there is a directed edge from $z_i$ to $z_j$ if and only if $y_i x_j$ is an edge (in $G$) with $c_{y_i} \ne c(y_i x_j) \ne c_{x_j}$.
By~\ref{itm:E2}, each $z_i$ has indegree and outdegree at least $m-  3 \eps m \ge m/2$.
Therefore $F$ contains a directed Hamilton cycle by a result of Ghouila-Houri~\cite{GH}, $z_1 z_2 \dots z_m z_1$ say.
Then $x_1 b_1 y_1 x_2 b_2 y_2 \dots z_m x_1$ is a properly coloured Hamilton cycle in~$G$ as desired.
\end{proof}

\begin{lemma} \label{lma:pathsystem}
Let $\ell \in \mathbb{N}$ and $0 < 1/n \ll  \eps \ll \alpha<  1/3$ with $\ell < \alpha  n$.
Let $G$ be a critical edge-coloured graph on $n$ vertices.
Suppose that $(A,B)$ is $\eps$-extremal such that $\alpha n + \ell +1 \le |B| \le |A|/2 +\ell$.
Suppose that $\mathcal{P}$ is a union of $\ell$ vertex-disjoint properly coloured paths such that each path has both of its endpoints in~$B$ and $| ( A \cup B ) \cap V ( \mathcal{P} ) | = 2 \ell$.
Then $G$ contains a properly coloured cycle with vertex set $V(C) = A \cup B \cup V(\mathcal{P})$.
\end{lemma}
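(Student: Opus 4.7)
My plan is to adapt the proof of Lemma~\ref{lma:extremalHcycle} to incorporate the given path system $\mathcal{P}$ and to handle the potential imbalance $|A|>2(|B|-\ell)$ (only the inequality, not equality, is assumed here). Write $B_0:=B\setminus\{b_i,b'_i:i\le\ell\}$, where $b_i,b'_i$ denote the endpoints of $P_i$, and set $k:=|B|-\ell=\ell+|B_0|$. The strategy is to build $k$ properly coloured \emph{super-segments}---one for each $P_i$ and one for each $b\in B_0$---each with both endpoints in $A$, and then to splice them together cyclically via properly coloured $A$-paths whose interiors partition the remaining vertices of $A$.

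First, I would greedily extend each $P_i$ to $\hat P_i=a_i\, b_i\, P_i\, b'_i\, a'_i$ by picking distinct $a_i,a'_i\in A$ with $c(a_i b_i)=c_{a_i}$ (so good $A$-$A$ edges at $a_i$ will not create colour clashes) and $c_{a_i}\ne c(b_i u_i)$, where $u_i$ is the $P_i$-neighbour of $b_i$ (symmetrically for $a'_i$). By~\ref{itm:E3} applied at $b_i$, at least $|A|-\eps n-1$ vertices of $A$ qualify, so the greedy choice succeeds because $|A|\gg\eps n+2\ell$. To process $B_0$, I would then invoke Hall's theorem on the bipartite graph between the $2|B_0|$ slots $B_0\times\{L,R\}$ and $A\setminus\{a_i,a'_i\}$ (edges encoding $c(ab)=c_a$); each slot has degree at least $|A|-2\ell-\eps n$ by~\ref{itm:E3}, and the Hall condition follows from $|A|-2\ell\ge 2|B_0|$. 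This produces super-segments $S_b=x_b\, b\, y_b$ for each $b\in B_0$, yielding $k$ super-segments $T_1,\dots,T_k$ in total and using $2k$ vertices of $A$. Let $A_{\mathrm{ex}}$ be the remaining $e:=|A|-2k$ vertices of $A$.

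The last step is to arrange $T_1,\dots,T_k$ in some cyclic order and to connect consecutive super-segments by properly coloured $A$-paths whose interiors partition $A_{\mathrm{ex}}$. Fix sizes $m_1,\dots,m_k\in\{\lfloor e/k\rfloor,\lceil e/k\rceil\}$ with $\sum m_j=e$, so the $j$-th connecting path is intended to have $m_j$ interior vertices. Construct an auxiliary directed graph $F$ on $\{z_1,\dots,z_k\}$ with $z_i\to z_j$ iff there is a properly coloured path of length $m_j+1$ in $G[A]$ from the right endpoint of $T_i$ to the left endpoint of $T_j$ whose interior lies in $A_{\mathrm{ex}}$ and whose terminal edges avoid the colours $c_{r_i}$ and $c_{l_j}$. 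By~\ref{itm:E2}, the subgraph of ``good'' $A$-$A$ edges has minimum degree at least $|A|-\eps n$, so each vertex of $F$ has in- and out-degree at least $k/2$ (since $k\ge\alpha n\gg\eps n$). Ghouila-Houri's theorem then yields a directed Hamilton cycle in $F$, which unwinds to the desired properly coloured cycle in~$G$ with vertex set $A\cup B\cup V(\mathcal{P})$.

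The main obstacle is the interdependence in the final step between the cyclic ordering of the super-segments and the partition of $A_{\mathrm{ex}}$ among connecting paths: a Hamilton cycle in $F$ individually witnesses each $z_i\to z_j$, but the witnessing paths may share vertices of $A_{\mathrm{ex}}$. I expect to resolve this either (a) by pre-assigning the extras to positions using a Hall-type matching on the dense bipartite good subgraph of $G[A]$ before invoking Ghouila-Houri, or (b) by running Ghouila-Houri on a coarser digraph (requiring only the existence of \emph{some} connecting $A$-path of the correct length) and then greedily refining the connecting paths along the resulting cycle, using the density of good $A$-$A$ edges to enforce disjointness.
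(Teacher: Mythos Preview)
Your overall plan is in the same spirit as the paper's---build ``super-segments'' centred on $B$-vertices (or on the paths of $\mathcal{P}$), then splice them cyclically using Ghouila-Houri on good $A$--$A$ edges---but Step~3 as written has a genuine gap that you yourself flag. The edges of your auxiliary digraph $F$ only certify the \emph{existence} of a connecting path of the right length; a Hamilton cycle in $F$ therefore does not produce vertex-disjoint connecting paths whose interiors partition $A_{\mathrm{ex}}$. Neither suggested fix is close to a proof: in (a) it is unclear what Hall-type matching you would run before the cyclic order is fixed (the ``positions'' are determined by that order), and in (b) ``greedily refining'' along a fixed Hamilton cycle of $F$ forces you to find $k$ internally disjoint $A$-paths of prescribed lengths between prescribed endpoints that together exhaust $A_{\mathrm{ex}}$---itself a Hamiltonicity-type statement, not a greedy one.

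The paper sidesteps this interdependence by decoupling the two tasks. First it \emph{balances}: if $|B|<|A|/2+\ell$ it uses one greedy properly coloured path $b\,a_1\cdots a_p\,b'$ with $b,b'\in B\setminus V(\mathcal{P})$ and $p=|A|-2(|B|-\ell-1)$ to absorb \emph{all} excess $A$-vertices at once (this works because the good $A$--$A$ graph has minimum degree $\ge|A|-\eps n$), adds this path to $\mathcal{P}$, and thereby reduces to the case $|A|=2m$, $|B|=m+\ell$. Second, it \emph{contracts} each $P_i$ to a single new vertex $x_i$, joined with colour $c_a$ to every $a$ in the common good neighbourhood $N_i\subseteq A$ of the two endpoints $b_i,b'_i$ (with the two colours in $C_{P_i}(b_i)\cup C_{P_i}(b'_i)$ excluded). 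The resulting graph $H$ has parts of sizes $2m$ and $m$, the pair $(A,V(H)\setminus A)$ is $\sqrt{\eps}$-extremal, and Lemma~\ref{lma:extremalHcycle} applies verbatim---with no leftover $A$-vertices to distribute, so the Ghouila-Houri step needs only \emph{single edges} between super-segments and no disjointness issue arises. Uncontracting gives the desired cycle. Your argument becomes correct if you replace Step~3 by this balance-then-contract manoeuvre; trying to spread $A_{\mathrm{ex}}$ across all $k$ gaps is the harder route and would need a substantially new argument.
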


\begin{proof}
First suppose that $|B| < |A|/2 +\ell $.
Let $p := |A|  -  2 ( |B| - \ell-1 )$, so $3 \le p \le |A| - 2 \alpha n $.
By~\ref{itm:E2} and a greedy argument, $G$ contains a properly colour path $b a_1 a_2 \dots a_{p} b'$ such that $a_1,\dots, a_p \in A$ and $b,b' \in B \setminus V ( \mathcal{P} )$.  
We add the path $b a_1 a_2 \dots a_{p} b'$ to $\mathcal{P}$ and call the resulting set $\mathcal{P}'$.
Let $A' = A \setminus \{a_1, \dots, a_p \}$, so $|A'| = |A|  - p = 2  ( |B|  - \ell  - 1)$.
Furthermore $(A', B)$ is $\eps$-extremal.
Therefore by replacing $A,B,\mathcal{P}$ with $A',B,\mathcal{P}'$, we may assume that without loss of generality that $|A| = 2m$ and $|B| = m + \ell$ for some integer $m \ge \alpha n$ with $\ell \le m$.

Consider $G[A \cup B] \cup \mathcal{P}$.
Suppose that $P_1, \dots, P_{\ell}$ are the paths of~$\mathcal{P}$.
We now contract each $P_i$ as follows. 
Let $b_i$ and $b_i'$ be the end vertices of~$P_i$, so $b_i,b_i' \in B$. 
Let $N_i$ be the common neighbours $a \in A$ of $b_i$ and $b_i'$ such that $c(a b_i ) = c(a b_i' ) =  c_a \notin C_{P_i}(b_i) \cup C_{P_i}(b_i')$.
Note that $|N_i| \ge |A| - 2 \eps n - 2 \ge 2m - 3 \eps \alpha^{-1} m \ge 2m - 3 \sqrt{\eps} m $ by~\ref{itm:E3}. 
We replace each $V(P_i)$ with a new vertex $x_i$ and join $x_i$ to each vertices $a \in N_i$ with colour $c_a$.
Call the resulting graph~$H$.
So $A \subseteq H$ and $|H| = 3m$.
Note that, for each $i \le \ell$, $d_H(x_i,A)= |N_i|\ge 2m - 3\sqrt{\eps} m$.
Since $V(H) \setminus A = B \setminus V( \mathcal{P} ) \cup \{ x_1, \dots, x_{\ell} \}$, it is easy to see that $(A,V(H) \setminus A)$ is $\sqrt{\eps}$-extremal in~$H$.
Lemma~\ref{lma:extremalHcycle} implies that $H$ has a properly coloured Hamiltonian cycle~$C$.
By replacing each $x_i$ in~$C$ with~$P_i$ we obtain a properly coloured cycle in $G$ with vertex set $A \cup B \cup V( \mathcal{P})$ as required.
\end{proof}

By Lemmas~\ref{lma:extremalH} and~\ref{lma:pathsystem}, to prove Lemma~\ref{lma:extremal} it suffices to find a union of suitable properly coloured paths.
We would need a finer partition $V(G) \setminus (A \cup B)$ into $Y$ and $Z$ as follows.
Let $Y$ be the set of $v \in V(G) \setminus ( A \cup B)$ such that $d^c_G(v,B) \ge 10 \eps n $ or $| \{  c(av) : a \in N_G(v) \cap A $ and $c(av) \ne c_a \} | \ge 10 \eps n $.
Let $Z := V(G) \setminus (A \cup B \cup Y)$.

\begin{proposition} \label{prop:BZ}
Let $\eps,\delta >0$.
Let $G$ be a critical edge-coloured graph on $n$ vertices with $\delta^c(G) \ge \delta n$.
Suppose that $(A,B)$ is $\eps$-extremal such that $|A| \ge (\delta - \eps) n $ and $ |B|  \ge (1-\delta - \eps )n$.
Let $Y,Z$ be a partition of $V(G) \setminus (A \cup B)$ as above.
For each $v \in Z$, there are at least $|A| - 24 \eps n$ vertices $a \in N_G(v) \cap  A$ such that $c(av) = c_a$.
Moreover, $(A,B \cup Z)$ is $24 \eps$-extremal.
\end{proposition}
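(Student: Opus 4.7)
The plan is to exploit criticality: since $G$ is critical, every monochromatic subgraph is a star, so for each $a\in A$ the colour $c_a$ is concentrated at a single vertex. Because by~\ref{itm:E2} there are at least $|B|-\eps n\ge 2$ edges of colour $c_a$ joining $a$ to $B$, the star of colour $c_a$ must be centred at $a$. Consequently, if $c(vu)=c_a$ then either $v=a$ or $u=a$; in particular for $v\in Z$ (so $v\notin A$), the colour $c_a$ appears on an edge at $v$ if and only if $va\in E(G)$ and $c(va)=c_a$, and moreover this can never happen for an edge from $v$ to $B$.

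Next I would extract the size constraints from extremality. From $|A|\ge(\delta-\eps)n$, $|B|\ge(1-\delta-\eps)n$ and $|A|+|B|\le n$, we get both $|V(G)\setminus(A\cup B)|\le 2\eps n$ and the key bound $|A|\le(\delta+\eps)n$. The latter is what turns the lower bound on the colour-$c_a$-degree at $v$ into the desired lower bound in terms of $|A|$.

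Now fix $v\in Z$ and bound $d^c_G(v)$ by partitioning $C_G(v)$ into colours of the form $c_a$ and all others. By the star argument, the first class has size exactly $|A_v|:=|\{a\in N_G(v)\cap A:c(av)=c_a\}|$. The second class is bounded by summing: edges from $v$ to $A$ with colour not of the form $c_a$ contribute fewer than $10\eps n$ colours (by definition of $Z$); edges from $v$ to $B$ contribute fewer than $10\eps n$ colours (again by definition of $Z$, and none of these colours is any $c_a$); and edges to $V(G)\setminus(A\cup B)$ contribute at most $|V(G)\setminus(A\cup B)|\le 2\eps n$. Summing, the second class has fewer than $22\eps n$ colours, so
\[
|A_v|\ge d^c_G(v)-22\eps n\ge \delta n-22\eps n\ge |A|-23\eps n\ge |A|-24\eps n,
\]
using $|A|\le(\delta+\eps)n$ in the second-to-last inequality. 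This gives the main statement.

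For the ``moreover'' clause, I would verify \ref{itm:E1}--\ref{itm:E3} for $(A,B\cup Z)$ with parameter $24\eps$. \ref{itm:E1} and the intra-$A$ half of \ref{itm:E2} are inherited verbatim. For the first half of \ref{itm:E2}, since $|Z|\le 2\eps n$ we have $|B\cup Z|-24\eps n\le |B|-22\eps n\le|B|-\eps n$, and \ref{itm:E2} for $(A,B)$ already supplies at least $|B|-\eps n$ vertices $b\in B\cap N(a)$ with $c(ab)=c_a$. For \ref{itm:E3}, the case $b\in B$ is immediate since $\eps\le24\eps$, and the case $b\in Z$ is precisely what was just established. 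The only substantive step is the colour-degree counting at a single $v\in Z$; the rest is bookkeeping, and I expect no real obstacle beyond making sure the ``star centred at $a$'' argument is invoked cleanly to rule out colour $c_a$ appearing on edges of $v$ to $B$.
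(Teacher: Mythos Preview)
Your proof is correct and follows essentially the same approach as the paper: bound the colour degree of $v\in Z$, subtract off the contributions from edges to $B$ (at most $10\eps n$ colours), to $Y\cup Z$ (at most $2\eps n$ colours), and to $A$ with the wrong colour (at most $10\eps n$ colours), and use $|A|\le(\delta+\eps)n$ to convert the resulting bound into one in terms of $|A|$. The paper phrases this as first bounding $d^c_G(v,A)\ge(\delta-12\eps)n$ and then subtracting the $10\eps n$ bad colours, whereas you partition $C_G(v)$ into colours equal to some $c_a$ and the rest; these are the same count organised differently. Your explicit invocation of the star/criticality argument to justify that $\{c_a:a\in A\}\cap C_G(v)$ is in bijection with $A_v$ is a nice clarification, though the paper's version gets the same conclusion using only the distinctness of the $c_a$'s from \ref{itm:E1}.
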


\begin{proof}
Note that $|Y| + |Z| \le 2 \eps n $.
Consider any $v \in Z$.
Since $d^c_G(v,B) < 10 \eps n $, we have 
\begin{align*}
d^c_G(v,A) \ge d^c_G(v) - d^c_G(v,B ) - |Y| - |Z| \ge (\delta - 12 \eps) n \ge |A| - 14 \eps n .
\end{align*}
On the other hand, $| \{  c(av) : a \in N_G(v) \cap A $ and $c(av) \ne c_a \} | < 10 \eps n $.
Thus there are at least $|A| - 24 \eps n$ vertices $a \in N_G(v) \cap  A$ such that $c(av) = c_a$.
\end{proof}

Instead of finding a union of suitable properly coloured paths, the next lemma shows that finding a suitable matching is sufficient.

\begin{lemma} \label{lma:matching1}
Let $0 < 1/n \ll  \eps \ll \alpha <  1/3$.
Let $G$ be a critical edge-coloured graph on $n$ vertices.
Suppose that $(A,B)$ is $\eps$-extremal such that $|A| \ge (2 \alpha  + 6 \eps) n+2  $ and $ |B|  \ge (\alpha +4 \eps) n + 1$.
Let $Y$ be the set of $v \in V(G) \setminus ( A \cup B)$ such that $d^c_G(v,B) \ge 10 \eps n $ or $| \{  c(av) : a \in N_G(v) \cap A $ and $c(av) \ne c_a \} | \ge 10 \eps n $.
Let $Z:= V(H) \setminus (A \cup B \cup Y)$.
Suppose that $M$ and $M'$ are vertex-disjoint matchings such that 
\begin{enumerate}[label = {\rm (\roman*)}]
	\item there are at most $2 \eps n $ edges in $M \cup M'$;
	\item $M \subseteq G \setminus A $;
	\item $M'  \subseteq G[A , B \cup Z]$ and for each edges $av \in M'$ with $a \in A$, $c(av) \ne c_a$.
\end{enumerate}
Then $G$ contains a properly coloured cycle $C$ such that
\begin{align*}
|C|\ge \min \left\{ n, \left\lfloor \frac{3|A|}2 + |M| +\frac{|M'|}2 + |Y| - \frac{| V(M) \cap Y|}2 \right\rfloor \right\}.
\end{align*}
\end{lemma}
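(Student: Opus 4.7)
The plan is to construct the cycle $C$ explicitly. First, partition $A$ into pair-segments using good $A$-$A$ edges (which exist in abundance by~\ref{itm:E2}), producing $|A|/2$ size-2 $A$-segments that in $C$ alternate with $|A|/2$ ``slots'' drawn from $B\cup Y\cup Z$. In the baseline (empty $M,M',Y$), each slot would hold a single $B$-vertex $b$ satisfying $c(ab)=c_a$ and $c(ba')=c_{a'}$ at both adjacent segment endpoints $a,a'$; a Hall-type SDR in the $c_a$-coloured bipartite subgraph of $G[A,B]$ (with minimum degrees $\ge |B|-\eps n$ on the $A$-side and $\ge |A|-\eps n$ on the $B$-side by~\ref{itm:E2}--\ref{itm:E3}) produces such fillers, yielding a properly coloured cycle of length $3|A|/2$ as in Lemma~\ref{lma:extremalHcycle}.

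I would then fold $M,M',Y$ into this slot structure. (a) Pair up the edges of $M'$: for each pair $(a_1v_1,a_2v_2)$, arrange the two $M'$-edges as the boundary edges of a common slot, so the slot contains $v_1,v_2\in B\cup Z$ instead of a baseline $B$-vertex (contributing $+\lfloor|M'|/2\rfloor$). (b) Place each $M$-edge with $0$ $Y$-vertices in its own slot of size 2, and each $M$-edge $y_1y_2$ with $2$ $Y$-vertices in its own slot of size 3 together with an extra baseline $B$-vertex (contributing $+|M|$ plus $+|V(M)\cap Y|/2$ from the 2-$Y$ slots). For $M$-edges with exactly one $Y$-vertex, pair them up and combine each pair into a single ``super-slot'' $u_1y_1by_2u_2$ of size 5, absorbing the two corresponding pair-segments into one good-coloured size-4 $A$-segment; this contributes $+3/2$ per such 1-$Y$ edge on average, with any unpaired leftover contributing only $+1$ (accounting for the $\lfloor\cdot\rfloor$ when $|V(M)\cap Y|$ is odd). (c) Insert the remaining $|Y|-|V(M)\cap Y|$ $Y$-vertices one per free slot as extras, each adding $+1$. (d) Fill the rest with baseline $B$-fillers by Hall's theorem. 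Proper colouring at every transition is verified using the extremal $c_a$-structure of $(A,B)$, Proposition~\ref{prop:BZ} for $Z$-vertices, and the defining colour flexibility of $Y$-vertices (which provides either $\ge 10\eps n$ distinct colours on edges to $B$ or $\ge 10\eps n$ non-$c_a$-coloured edges to $A$ for each $y\in Y$).

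Enumerating slot sizes and segment counts verifies that the cycle has length exactly $\lfloor 3|A|/2+|M|+|M'|/2+|Y|-|V(M)\cap Y|/2\rfloor$. If this target exceeds $n$, the construction is modified by further merging slots (so that the corresponding $A$-segments become good-coloured $A$-paths of length $\ge 3$ rather than just edges) until the slot demand matches the available $B\cup Y\cup Z$, yielding a Hamilton cycle of length $n$.

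The hardest step is (b), the combined super-slots: realising $u_1y_1by_2u_2$ as a properly coloured path requires choosing a baseline $B$-vertex $b$ whose colours $c(y_1b),c(by_2)$ are distinct from each other and from $c(u_1y_1),c(y_2u_2)$, and simultaneously lengthening the absorbed pair of $A$-segments into a single good-coloured $A$-path of length 3. The colour flexibility of $Y$-vertices, together with the near-completeness of the good-edge graph on $A$, supplies $\Theta(n)$ candidates at each step, so all the choices can be made greedily (or by iterated Hall arguments) despite the previously reserved vertices; the remaining steps ($M'$-pairing, handling of 0-$Y$ and 2-$Y$ $M$-edges, $Y$-insertion, baseline-$B$ SDR) are essentially bookkeeping given $|M|+|M'|+|Y|\le O(\eps n)\ll |A|$.
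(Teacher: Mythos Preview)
Your direct slot/segment construction differs substantially from the paper's route, which is a clean two-stage reduction. The paper first extends $M\cup M'$ into a path system $\mathcal{P}$ (Claim~\ref{clm2}): each $M'$-edge $av$ is extended \emph{individually} to a length-$2$ path $bav$ with $b\in B$ and $c(ab)=c_a$; then each $y\in Y$ is covered by extending to a path with both endpoints in $B\cup Z$, either $byb'$ (if $d^c_G(y,B)\ge 10\eps n$) or $baya'b'$ (if $y$ has many non-$c_a$ edges into $A$). This yields $\ell^*=|M|+|M'|+|Y|-q$ vertex-disjoint properly coloured paths with endpoints in $B\cup Z$ using at most $|M'|+2|Y|-q$ vertices of $A$. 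The paper then simply invokes Lemma~\ref{lma:pathsystem} (contraction plus Lemma~\ref{lma:extremalHcycle}) on $(A^*,B^*)$; the length bound $\lfloor 3|A|/2+|M|+|M'|/2+|Y|-q/2\rfloor$ falls out of the identity $|C|=|A|+|Y|+\lfloor|A^*|/2\rfloor+\ell^*$.

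Your plan has two concrete gaps. In~(a), putting $v_1,v_2$ together in one slot with the $M'$-edges $a_1v_1$ and $v_2a_2$ as its two boundary edges forces a properly coloured connection from $v_1$ to $v_2$ inside the slot; but $v_1,v_2\in B\cup Z$ are arbitrary and need not even be adjacent in $G$, so there is nothing to write there. The $+|M'|/2$ in the paper does \emph{not} come from pairing $M'$-edges; it comes from each $M'$-edge contributing $+1$ to $\ell^*$ while removing one vertex from $A^*$, hence $+1-\tfrac12$ in the formula above. In~(b), your super-slot $u_1y_1by_2u_2$ presupposes an edge $y_1b$ with $b\in B$, but a vertex $y\in Y$ may qualify for $Y$ solely through $\ge 10\eps n$ non-$c_a$ colours into $A$ and have \emph{no} usable edge to $B$ at all; in that case one must detour through $A$ (this is exactly why the paper's Claim~\ref{clm2} builds $baya'b'$ in that sub-case), and this breaks your slot/segment accounting. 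The paper's path-system abstraction handles both $Y$-types uniformly and delegates all cycle-building to the already-proved Lemma~\ref{lma:pathsystem}, avoiding the ad~hoc case analysis you sketch.
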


\begin{proof}
Note that $(A, B \cup Z)$ is $24 \eps $-extremal by Proposition~\ref{prop:BZ}.
Our aim is to extend $M \cup M'$ into a suitable path system~$\mathcal{P}$ (see Claim~\ref{clm2} for the precise properties) such that we can apply Lemma~\ref{lma:pathsystem}.
The key features of~$\mathcal{P}$ are that every path is properly coloured with both endpoints in $B \cup Z$ and that $\mathcal{P}$ covers~$Y$.
Here, we give a rough outline on how to construct $\mathcal{P}$ from $M \cup M'$ (that is, the proof of Claim~\ref{clm2}).
For simplicity, we assume that $M \subseteq G[B \cup Z]$ (so the edges of~$M$ can be already viewed as paths with both endpoints in $B \cup Z$).
For each edge $av \in M'$ with $a \in A$, we add the edge $ab$ with $b \in B$ such that $c(ab) = c_a \ne c(av)$.
In order to cover $Y$, consider any $y \in Y$. 
If $d^c_G(y,B) \ge 10 \eps n $, then we extend $y$ to a path $byb'$ with $b,b' \in B$.
Otherwise, we have $| \{  c(av) : a \in N_G(v) \cap A $ and $c(av) \ne c_a \} | \ge 10 \eps n $, so we construct the path $baya'b'$ with $a,a' \in A$ and $b,b' \in B$.

We now give the formal definition of $\mathcal{P}$ in the following claim.

\begin{claim} \label{clm2}
Let $q:= | V(M) \cap Y|$.
There exists a properly coloured subgraph~$\mathcal{P}$ of~$G$ such that 
$M \cup M' \subseteq \mathcal{P}$ and 
\begin{enumerate}[label = {\rm (\roman*$'$)}]
	\item $\mathcal{P}$ is a union of $\ell^*$ vertex-disjoint path such that each path has both endpoints in $B \cup Z$;
	\item $\ell^* = |M| + |M'| + |Y| -q \le 4 \eps n $;
	\item $\mathcal{P}$ covers $Y$;
	\item $\mathcal{P}$ contains precisely $2 \ell^*$ vertices in $B \cup Z$, that is, each vertex in $V ( \mathcal{P} ) \cap (B \cup Z)$ is an endpoint of some path in~$\mathcal{P}$;
	\item $\mathcal{P}$ contains at most $|M'|  + 2|Y| - q$ vertices in $A$.
\end{enumerate}
\end{claim}

\begin{proof}[Proof of claim]
We construct $\mathcal{P}_0$ as follows.
Initially, we set $\mathcal{P}_0 : = M \cup M'$.
For each edge $av \in M'$ with $a \in A$, we add an edge $ab$ to $\mathcal{P}_0$ such that $b \in B \setminus V(\mathcal{P})$ is distinct and $c(ab) = c_a \ne c(av)$ (which exists by~\ref{itm:E2}).
Thus $\mathcal{P}_0$ is a union of $|M| +|M'|$ vertex-disjoint paths such that each path has both endpoints in $V(G) \setminus A$,
\begin{align*}
	| V( \mathcal{P}_0 )  \setminus A | = 2 |M| + 2 |M'|, \quad 
	| V(\mathcal{P}_0) \cap Y| = q
	\quad 	\text{ and } \quad
	|V( \mathcal{P} ) \cap A| = |M'|.
\end{align*}

Let $Y := \{y_1, \dots, y_{|Y|} \}$ be such that $V(\mathcal{P}_0) \cap Y = \{y_1, \dots, y_{q}\}$.
Suppose that for some $ i \le |Y|$ we have already constructed $\mathcal{P}_0 \subseteq \dots \subseteq \mathcal{P}_{i-1}$ such that for all $j < i$ 
\begin{enumerate}[label={\rm(Q\arabic*)}]
	\item 
	\label{itm:Q1}
	$\mathcal{P}_j$ is an union of $|M| + |M'| + \max \{ 0, j - q\}$ vertex-disjoint properly coloured paths;
	\item 
	\label{itm:Q2}
	$| (B \cup Z) \cap V(\mathcal{P}_{j}) | = 2|M| + 2|M'|  -q +  j + \max \{ 0, j - r\}$ and $|A \cap V(\mathcal{P}_{j})| \le |M'| + j + \max \{ 0, j - q\}$;
	\item 
	\label{itm:Q3}
	every vertex in $V(\mathcal{P}_{j}) \cap (B \cup Z)$ is an endpoint of some paths in $\mathcal{P}_j$;
	\item 
	\label{itm:Q4}
	for all $j' \le j$, $d_{\mathcal{P}_{j}} (y_{j'}) = 2$ and for all $j' > j$, $d_{\mathcal{P}_{j}} (y_{j'}) = d_{\mathcal{P}_{j-1}} (y_{j'}) $.
\end{enumerate}
We now construct $\mathcal{P}_i$ as follows.
By~(Q2), $|B \cap  V(\mathcal{P}_{i-1}) | , |A \cap  V(\mathcal{P}_{i-1})| \le 8 \eps n $.

Note that by~(Q4)
\begin{align*}
d_{\mathcal{P}_{i-1}} (y_{i}) = d_{ \mathcal{P}_{0}} (y_{i}) = d_M(y_{i}) = 
\begin{cases}
1 & \text{if $i \le q$}\\
0 & \text{otherwise.}
\end{cases}
\end{align*}

Suppose that $i \le q$.
Let $c'$ be the colour of the edge incident with $y_{i}$ in~$\mathcal{P}_{i-1}$.
If $d_G^c(y_i, B ) \ge 10 \eps n$, then there exists an edge $b y_{i}$ such that $b \in B \setminus V(\mathcal{P}_{i-1})$ and $c(b y_{i}) \ne c'$ and set $\mathcal{P}_{i} : = \mathcal{P}_{i-1} \cup b y_{i}$.
Thus, we may assume that there exist at least $10 \eps n$ vertices $a \in A \cap N_G(y_{i})$ such that $ c(a y_{i}) \ne c_a$ and these $c(a y_{i})$ are distinct.
So there exists a vertex $a \in ( A \cap N_G(y_{i}) ) \setminus V(\mathcal{P}_{i-1})$ such that $c_a \ne c(a y_{i}) \ne c'$.
By~\ref{itm:E2}, there exists a vertex $b \in B \cap N_G(a) \setminus V(\mathcal{P}_{i-1})$ such that $c(ab) = c_a \ne c(ay_{i})$.
Set $\mathcal{P}_{i} := \mathcal{P}_{i-1} \cup \{a y_{i} , ab \}$.
A similar argument also holds for the case when $i > q$, where we apply the previous argument twice. 
Finally, set $\mathcal{P} := \mathcal{P}_{|Y|}$.
\end{proof}

Let $A^* : = A \setminus V(\mathcal{P})$.
Let $B^*$ be a subset of $B \cup Z$ such that $V(\mathcal{P}) \cap ( B \cup Z) \subseteq B^*$ and $|B^*| = \min \{ |B |+| Z|, \lfloor |A^*| /2 \rfloor + \ell^* \}$.

Note that $|B | \ge (\alpha +4 \eps n)+1 \ge \alpha n + \ell^* + 1$, where the last inequality holds by Claim~\ref{clm2}(ii$'$).
Since $|Y| \le 2 \eps n$,  together with Claim~\ref{clm2}(v$'$) and (i), we have
\begin{align*}
	|A^*|  \ge |A| - ( |M'| + 2 |Y| ) \ge  |A| - 6 \eps n \ge 2 \alpha n +2.
\end{align*}	
Therefore, we deduce that $|B^*| \ge \alpha n + \ell^* + 1$.

Note that $(A^*,B^*)$ is $24\eps$-extremal (as $(A, B \cup Z)$ is by Proposition~\ref{prop:BZ}).
By Lemma~\ref{lma:pathsystem}, $G$ contains a properly coloured cycle~$C$ with vertex set $A^* \cup B^* \cup V( \mathcal{P} ) = A \cup B^* \cup Y$ by Claim~\ref{clm2}(iii$'$).
If $|B^*| = |B|+ |Z|$, then $C$ is a properly coloured Hamilton cycle of~$G$. 
If $|B^*| = \lfloor |A^*| /2 \rfloor + \ell^* $, then
\begin{align*}
	|C| & = 
	|A| +|Y|+ |B^*|
	= |A|  + |Y|+ \lfloor |A^*| /2 \rfloor + \ell^*  \\
	&  = |A|  + |Y|+ \lfloor (|A| - |V(\mathcal{P}) \cap A|) /2 \rfloor + \ell^*  \\
	& \overset{\mathclap{\text{(ii$'$), (v$'$)}}}{\ge} |A|  + \left\lfloor \frac{ |A| - ( |M'| + 2|Y| - q ) }{2} \right\rfloor + |M| + |M'| + 2|Y|-q \\
	& = \left\lfloor \frac{3|A|}2 +|M| + \frac{|M'|}2 + |Y| -\frac{q}2 \right\rfloor
\end{align*}	
as required.
\end{proof}

We are ready to prove Lemma~\ref{lma:extremal}.

\begin{proof}[Proof of Lemma~\ref{lma:extremal}]
Let $\eps' := 4 \sqrt{\eps}$ and without loss of generality (by adjusting $\eps'$ slightly), we have $(\delta - \eps') n \in \mathbb{Z}$.
Let $\alpha$ such that $\eps \ll \alpha \ll \delta$.
Apply Lemma~\ref{lma:extremalH} and obtain an $\eps'$-extremal pair $(A,B)$ such that  $|A| \ge (\delta - \eps') n $,
\begin{align*}
|B|  \ge (1-\delta - \eps' )n \ge (\alpha + 8\eps')n+1.
\end{align*}
and 
\begin{align}
\label{eqn:e1}
d_{G} ( b ) & \le ( \delta + \eps ) n \text{ for each $b \in B$}.
\end{align}
By removing vertices of~$A$ if necessary, we may assume that 
\begin{align}
|A| = (\delta - \eps') n  \ge (2\alpha + 12\eps')n+2. \label{eqn:e2}
\end{align}
Let $Y$ be the set of $v \in V(G) \setminus ( A \cup B)$ such that $d^c_G(v,B) \ge 10 \eps' n $ or $| \{  c(av) : a \in N_G(v) \cap A $ and $c(av) \ne c_a \} | \ge 10 \eps' n $.
Let $Z := V(G) \setminus (A \cup B \cup Y)$.
Let $p := \max \{ \eps' n - |Y| , 0\}$, so 
\begin{align}
	|Y| \ge \eps'n -p. \label{eqn:Y}
\end{align}

Let $F: = G \setminus A$.
So $\delta ( F ) \ge \eps' n$.
Let $R$ be the set of vertices $v \in V (F)$ such that $d_F (v) \le 10 \eps' n$ and let $S := V(F) \setminus R$.
Note that $|R| \ge (1- \delta - \eps')n$ as $B \subseteq R$ by~\ref{itm:E3} and~\eqref{eqn:e1}. 
Since $\Delta (F[R]) \le 10 \eps' n $, Vizing's theorem implies that there exists a matching $M_R$ in $F[R]$ such that $|M_R| \ge e(F[R]) / ( 10 \eps' n + 1 ) \ge 8 e(F[R]) / |R|$.
By summing the degrees $d_F(v)$ in $v \in R$, we have
\begin{align}
	|R| \eps' n & \le \sum_{v \in R} d_{F}(v) = 2 e(F[R]) + e(F[R,S])
	\le |R| |M_R| / 4 + |R||S|, \nonumber \\
	\eps' n & \le  |M_R| /4 + |S|. \label{eqn:M_R}
\end{align}
We now divide the proof into two different cases.

\medskip \noindent
\textbf{Case 1: $|M_R| + |S| \ge   \eps' n + p /2 $}.
We claim that there exists a matching $M$ in $F= G\setminus A$ such that $|M| = \lceil \eps' n + p /2 \rceil $.
Indeed, there is nothing to prove if $|M_R| \ge \eps' n + p/2$.
If $|M_R| < \eps' n + p/2$, then we can extend $M_R$ into a matching $M$ of size $\lceil \eps' n + p /2 \rceil$ by adding (appropriate) edges incident with $S$ (as $d_F(s) \ge 10 \eps' n $ for all $s \in S$ and $p \le \eps' n$). 

Note that $|M| = \lceil \eps' n + p /2 \rceil  \le 2 \eps' n $ and  
\begin{align*}
	& \left\lfloor \frac{3|A|}2 + |M| + |Y| -\frac{|V(M_R) \cap Y|}2 \right\rfloor
	 \ge \left\lfloor \frac{3|A|}2 + |M| + \frac{|Y|}2 \right\rfloor \\
	& \overset{ {\eqref{eqn:e2}, \eqref{eqn:Y}} }{\ge} \left\lfloor \frac{3( \delta - \eps' ) n }2 + \eps' n + \frac{p}2 + \frac{\eps' n - p}{2}  \right\rfloor = \lfloor 3 \delta n /2 \rfloor.
\end{align*}
By Lemma~\ref{lma:matching1} (with $M,\emptyset,\eps'$ playing the roles of $M,M', \eps$), $G$ contains a properly coloured cycle~$C$ such that  $|C| \ge \min \{ n,  \lfloor 3 \delta n /2 \rfloor \}$ as desired.

\medskip \noindent
\textbf{Case 2: $|M_R| + |S| < \eps' n + p /2$.}
Together with \eqref{eqn:M_R} we have $|M_R| < 2p/3$ and $p >0$.
Thus $|Y| = \eps' n - p$.

\medskip \noindent
\textbf{Case 2a: $|S \cap Y| \le \eps' n - 10 p /3 $ }.
Note that by~\eqref{eqn:Y}
\begin{align*}
 | Y \setminus (S \cup V(M_R)) | \ge |Y| - |S\cap Y| - 2 |M_R| 
 \ge \eps' n - p - ( \eps' n - 10 p /3) - 4p/3 = p .
\end{align*}

By~\eqref{eqn:M_R}, $|M_R| + |S| \ge \eps' n $.
We can extend $M_R$ into a matching $M$ in~$F= G\setminus A$ such that $|M| = \lceil  \eps' n \rceil $ and $|Y \setminus V(M)| \ge p$.
Indeed this is possible, by adding appropriate edges between~$S$ and $V(F) \setminus Y$ as $d_F(s) \ge 10 \eps' n \ge |Y|+9\eps' n$ for all $s \in S$.
Hence
\begin{align}
	& \left\lfloor \frac{3|A|}2 + |M| + |Y| -\frac{| V(M) \cap Y |}2 \right\rfloor
	= \left\lfloor \frac{3|A|}2 + |M| + \frac{|Y| + | Y \setminus V(M_R) |}2 \right\rfloor
	\nonumber \\
	&  \overset{ {\eqref{eqn:e2}, \eqref{eqn:Y}} }{\ge} \left\lfloor \frac{3 (\delta - \eps') n }2 + \eps' n  +  \frac{ (\eps' n - p) + p }2 \right\rfloor 
	= \left\lfloor \frac{3 \delta n }2 \right\rfloor.  \nonumber
\end{align}
We are done by Lemma~\ref{lma:matching1} (with $M,\emptyset,\eps'$ playing the roles of $M,M', \eps$).

\medskip \noindent
\textbf{Case 2b: $|S \cap Y| > \eps' n - 10 p /3 $.}
Recall that $|  M_R | < 2p/3$ and $|M_R|+ |S| \le \eps' n + p/2$.
So 
\begin{align}
| (S \cup V(M_R) ) \cap (B \cup Z) | & = | (S \cup V(M_R) ) \setminus Y | 
\le | S |+ 2|M_R| -  |S \cap Y|  \nonumber \\
& \le \eps' n + p/2 + 2p/3 - (\eps' n - 10 p /3 )
= 9 p /2 . \label{eqn:SM_R}
\end{align}
Let $F'$ be the subgraph $G[A,B \cup Z]$ obtained by removing all edges $uv$ with $c(uv) = c_a$ for some $a \in A$.
Note that for each $a \in A$, 
\begin{align*}
d_{F'}(a) \ge \delta^c(G) - ( 1 + |V(G) \setminus (B \cup Z)| - 1 ) 
= \delta n - |A| - |Y| 
= \eps' n - |Y| = p.
\end{align*}
Hence, $e(F') \ge p |A| \ge p(\delta- \eps') n$ and $\Delta (F') \le 24 \eps' n$ as $(A, B \cup Z)$ is $24 \eps'$-extremal by Proposition~\ref{prop:BZ}.
Since $\eps' \ll \delta$, K\"{o}nig's theorem implies that there is a matching 
\begin{align*}
 e(F')/ \Delta (F') \ge  11 p/2 \overset{\eqref{eqn:SM_R}}{\ge} p + | (S \cup V(M_R) ) \cap V(F') | .
\end{align*}
Thus there is a matching $M'$ in $F' \subseteq G[A,B \cup Z]$ such that $ | M'| =  p $ and $V(M') \cap (V (M_R) \cup S ) = \emptyset$.
By adding (appropriate) edges of~$F$ incident with~$S$, we can extend $M_R$ into a matching~$M$ in~$F= G\setminus A$ satisfying $V(M) \cap V(M') = \emptyset$, $|M| = \lceil \eps' n \rceil$.
Note that $|M|+|M'| = p +\lceil \eps' n \rceil \le 2 \eps'n+1$ and 
\begin{align}
	 &\left\lfloor \frac{3|A|}2 + |M|  + \frac{|M'|}2 + |Y| -\frac{| V(M) \cap Y |}2 \right\rfloor 
	\ge \left\lfloor \frac{3|A|}2 + |M|  + \frac{|M'|}2 + \frac{|Y |}2 \right\rfloor
	\nonumber \\
	&\overset{ {\eqref{eqn:e2}, \eqref{eqn:Y}} }{\ge}  \left\lfloor \frac{3 (\delta - \eps') n }2 + \eps' n + \frac{p}2  +  \frac{\eps' n - p}2 \right\rfloor 
	= \left\lfloor \frac{3 \delta n }2 \right\rfloor.  \nonumber
\end{align}
Again, we are done by Lemma~\ref{lma:matching1} (with $M,M',2\eps'$ playing the roles of $M,M', \eps$).
\end{proof}

\section{Absorbing cycles} \label{sec:abs}

In this section, we prove Lemma~\ref{lma:abscycle}.
We need the following definitions. 
Given a vertex $x$, we say that a path $P$ is an \emph{absorbing path for~$x$} if the following conditions hold:
\begin{enumerate}
	\item[\rm (i)] $P = z_1z_2z_3z_4$ is a properly coloured path of length~$3$; 
	\item[\rm (ii)] $ x \notin V(P)$;
	\item[\rm (iii)] $z_1 z_2 x z_3 z_4$ is a properly coloured path.
\end{enumerate}
Next we define an absorbing path for two disjoint edges.
Given two vertex-disjoint edges $x_1x_2$, $y_1y_2$, we say that a path $P$ is an \emph{absorbing path for $(x_1, x_2; y_1, y_2)$} if the following conditions hold:
\begin{enumerate}
	\item[\rm (i)] $P = z_1z_2z_3z_4$ is a properly coloured path of length~$3$;
	\item[\rm (ii)] $V(P) \cap  \{ x_1, x_2, y_1, y_2\} = \emptyset$; 
	\item[\rm (iii)] both $z_1 z_2 x_1 x_2 $ and $y_1 y_2 z_3 z_4$ are properly coloured paths of length~3.
\end{enumerate}
Note that the ordering of $(x_1, x_2; y_1, y_2)$ is important.
We would also need the following proposition from~\cite{LoDirac}.

\begin{proposition} \label{prp:abspath}
Let $P' = x_1 x_2 \dots x_{\ell-1} x_{\ell}$ be a properly coloured path with $\ell \ge 4$. 
Let $P = z_1 z_2 z_3 z_4$ be an absorbing path for $(x_1, x_2; x_{\ell-1}, x_{\ell} )$ with $V(P) \cap V(P') = \emptyset$.
Then $z_1 z_2 x_1 x_2 \dots x_{\ell -1} x_{\ell} z_3z_4$ is a properly coloured path.
\end{proposition}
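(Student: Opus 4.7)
The statement is a routine verification once one unpacks the two definitions, so my plan is simply to organise the checks cleanly. The claim has two components: (a) the concatenated sequence $z_1 z_2 x_1 x_2 \dots x_{\ell-1} x_{\ell} z_3 z_4$ is actually a path (i.e.\ the vertices are pairwise distinct), and (b) no two consecutive edges along this sequence share a colour.

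For (a), I would observe that $V(P) \cap V(P') = \emptyset$ by hypothesis, while $P$ and $P'$ are themselves paths so each has distinct vertices. Hence the union of their vertex sequences listed in the stated order consists of $\ell + 4$ distinct vertices.

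For (b), I would classify the vertices of the concatenation into three groups according to which edges are incident to them. The interior vertices $x_2, x_3, \dots, x_{\ell-1}$ of $P'$ see only edges of $P'$, and these are correctly coloured because $P'$ is properly coloured by assumption. Similarly, the vertices $z_1$ and $z_4$ are endpoints, hence incident to a single edge, so nothing needs to be checked there. The only nontrivial cases are the four ``interface'' vertices $z_2$, $x_1$, $x_{\ell}$, $z_3$: at $z_2$ the two incident edges are $z_1 z_2$ and $z_2 x_1$, which have distinct colours since $z_1 z_2 x_1 x_2$ is a properly coloured path by condition (iii) of the definition of absorbing path for $(x_1, x_2; x_{\ell-1}, x_{\ell})$; the same path takes care of $x_1$, whose incident edges $z_2 x_1$ and $x_1 x_2$ must also be differently coloured. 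The mirror argument using the properly coloured path $x_{\ell-1} x_{\ell} z_3 z_4$ settles the vertices $x_{\ell}$ and $z_3$.

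There is no real obstacle here; the proposition is essentially the definition of an absorbing path written out. The only thing to be careful about is to use the two ``boundary'' properly coloured paths of length three provided by item (iii) of the definition, rather than trying to verify the boundary edges $z_2 x_1$ and $x_{\ell} z_3$ in isolation (which cannot be done without exactly that extra information about the colours on $z_1 z_2$, $x_1 x_2$, $x_{\ell-1} x_{\ell}$, and $z_3 z_4$).
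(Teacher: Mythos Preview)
Your proof is correct. The paper does not actually prove this proposition; it simply quotes it from~\cite{LoDirac}, so there is nothing to compare against beyond noting that your direct verification from the definition of an absorbing path is exactly what is needed.
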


Given a vertex $x$, let $\mathcal{L}(x)$ be the set of absorbing paths for $x$.
Similarly, given two vertex-disjoint edges $x_1x_2$, $y_1y_2$, let $\mathcal{L}(x_1, x_2; y_1,y_2)$ be the set of absorbing paths for $(x_1, x_2; y_1,y_2)$.
The following lemma follows immediately from Lemmas~4.3 and~4.5 of~\cite{LoDirac}.

\begin{lemma} \label{lma:absorbing}
Let $0 < 1/n \ll \gamma \ll \eps < 1/2$. 
Let $G$ be an edge-coloured graph on $n$ vertices with $\delta^c(G) \ge (1/2 + \eps)n$.
Then there exists a family $\mathcal{F}$ of vertex-disjoint properly coloured paths each of length~$3$, which satisfies the following properties:
\begin{align*}
|\mathcal{F}| & \le  \gamma^{1/2} n, &
|\mathcal{L}(x) \cap \mathcal{F}| & \ge \gamma n,  &
|\mathcal{L}(x_1, x_2;y_1, y_2) \cap \mathcal{F}| & \ge  \gamma n
\end{align*}
for all $x \in V(G)$ and for all distinct vertices $x_1,x_2, y_1,y_2 \in V(G)$ with $x_1x_2$, $y_1y_2 \in E(G)$.
\end{lemma}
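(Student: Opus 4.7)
The proof will follow the standard absorbing-technique template in two steps: (i) show that in $G$ itself every vertex $x$ admits $\Omega(n^4)$ absorbing paths and every ordered quadruple $(x_1,x_2;y_1,y_2)$ coming from two disjoint edges admits $\Omega(n^4)$ absorbing paths; (ii) extract a small vertex-disjoint subfamily by independent random sampling and a deletion step.

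For step (i), the basic device is the \emph{colour-degree-representative set} $N^c(v) \subseteq N_G(v)$ obtained by picking one neighbour of $v$ for each colour of $C_G(v)$, so $|N^c(v)| = d^c_G(v) \ge (1/2+\eps)n$ and the edges from $v$ to $N^c(v)$ carry pairwise distinct colours. To count absorbers for $x$, first pick $z_3 \in N^c(x)$, then $z_2 \in N^c(x) \cap N_G(z_3) \setminus \{z_3\}$: using $|N^c(x)| + |N_G(z_3)| \ge (1+2\eps)n$ one gets $\Omega(\eps n^2)$ such pairs, with $c(xz_2) \ne c(xz_3)$ holding automatically and $z_2 z_3 \in E(G)$ by construction. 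Extend to $z_1 \in N^c(z_2) \setminus \{x,z_3\}$ with $c(z_1 z_2) \notin \{c(xz_2), c(z_2z_3)\}$, and similarly to $z_4 \in N^c(z_3)$; each step forbids two colours (and thus two vertices in the representative set) plus $O(1)$ vertices, leaving $(1/2+\eps)n - O(1)$ choices. Multiplying through gives $|\mathcal{L}(x)| = \Omega(\eps n^4)$. The case of $\mathcal{L}(x_1,x_2;y_1,y_2)$ is analogous: impose instead $z_2 \in N^c(x_1)$ with $c(z_2 x_1) \ne c(x_1 x_2)$ and $z_3 \in N^c(y_2) \cap N_G(z_2)$ with $c(z_3 y_2) \ne c(y_1 y_2)$ (each extra constraint is again a single forbidden colour), then extend to $z_1,z_4$ as before, giving the same $\Omega(n^4)$ bound.

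For step (ii), let $\mathcal{P}$ be the collection of all ordered $4$-tuples forming a properly coloured $P_3$ in $G$. Include each element independently in a random family $\mathcal{F}_0$ with probability $p = C \gamma n^{-3}$, where $C = C(\eps)$ is chosen so that $\mathbb{E}|\mathcal{L}(\cdot) \cap \mathcal{F}_0| \ge 3 \gamma n$ for every target. There are at most $n + n^4 \le 2n^4$ targets; Chernoff gives exponentially small deviation probability for each, so a union bound yields, with positive probability, $|\mathcal{F}_0| \le 2 \gamma n$ and $|\mathcal{L}(\cdot) \cap \mathcal{F}_0| \ge 2 \gamma n$ for every target. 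The expected number of ordered pairs of paths in $\mathcal{F}_0$ sharing a vertex is at most $p^2 \cdot O(n^7) = O(\gamma^2 n)$, so by Markov we may further assume it is at most $\gamma^{3/2} n$. Deleting one path from each such intersecting pair produces a vertex-disjoint family $\mathcal{F}$ with $|\mathcal{F}| \le 2 \gamma n \le \gamma^{1/2} n$ and at least $2 \gamma n - \gamma^{3/2} n \ge \gamma n$ surviving absorbers for every target.

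The main obstacle is the counting step for disjoint-edge pairs, where several colour-avoidance constraints at the internal vertices must be juggled simultaneously with the adjacency requirement $z_2 z_3 \in E(G)$. The representative-neighbour device is what makes it work: it converts every forbidden colour into at most one forbidden vertex, so the $\Omega(\eps n)$ slack available at each of the four vertex choices easily absorbs the $O(1)$ extra constraints. Once the counting is done, the probabilistic extraction is an entirely standard Chernoff-plus-alteration argument.
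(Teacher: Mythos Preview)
Your approach is correct in outline and is precisely the standard absorbing-lemma argument; the paper itself does not prove this lemma at all but simply quotes it as following from Lemmas~4.3 and~4.5 of \cite{LoDirac}, so you have supplied what the paper outsources.

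There is one genuine numerical slip in step~(ii). You set $p=C\gamma n^{-3}$ with $C=C(\eps)$ chosen so that each target has $\mathbb{E}|\mathcal{L}(\cdot)\cap\mathcal{F}_0|\ge 3\gamma n$; since $|\mathcal{L}(\cdot)|=\Omega(\eps n^4)$ this forces $C=\Theta(1/\eps)$. But then $\mathbb{E}|\mathcal{F}_0|\le p\cdot n^4 = C\gamma n=\Theta(\gamma n/\eps)$, which is \emph{not} at most $2\gamma n$ as you claim, and likewise the expected number of intersecting pairs is $O(C^2\gamma^2 n)=O(\gamma^2 n/\eps^2)$ rather than $O(\gamma^2 n)$. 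Both errors are harmless because the hierarchy $\gamma\ll\eps$ lets you absorb the $\eps$-dependent constants: one has $C\gamma n\le \gamma^{1/2}n/2$ and $C^2\gamma^2 n\le \gamma n$ once $\gamma\le c\,\eps^2$ for a suitable absolute constant~$c$. So the argument goes through, but the intermediate bound $|\mathcal{F}_0|\le 2\gamma n$ should be replaced by $|\mathcal{F}_0|\le \gamma^{1/2}n/2$ (or by $2C\gamma n$ with the observation that this is at most $\gamma^{1/2}n$), and the intersecting-pair bound should be stated as $o_\eps(\gamma n)$ rather than $\gamma^{3/2}n$.
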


To prove Lemma~\ref{lma:abscycle}, we aim to join the paths in $\mathcal{F}$ given by Lemma~\ref{lma:absorbing} into a properly coloured cycle. 
First, we need the following definition, which are only used in this section.

Let $G$ be an edge-coloured graph on $n$ vertices.
Let $x, y \in V(G)$ be distinct and let $\ell  \in \mathbb{N}$.
Define $\mathcal{P}^G_{\ell}(x; y)$ to be the set of properly coloured paths~$P$ of length~$\ell$ from $x$ to~$y$.
Define $	\mu^G _{\ell}(x; y) : = |\mathcal{P}^G_{\ell}(x; y)| / n^{\ell - 1}$ and $	\mu ^G _{ \le \ell}(x; y) : = \sum_{\ell' \le \ell} \mu ^G _{\ell'}(x; y)$.
For a colour set $C_y$, let $\mathcal{P}^G_{\ell}(x ; y, C_y)$ be the set of paths $P \in \mathcal{P}_{\ell}(x;y)$ such that $C_{P}(y) \in C_y$.
Define $\mu ^G _{ \ell } (x ; y, C_y)$ and $\mu ^G _{\le  \ell } (x ; y ,C_y)$ analogously. 
For $\ell \in \mathbb{N}$ and $\eta >0$, we say that $y$ is \emph{$(\le \ell,\eta)$-reachable from~$x$ in $G$} if $\mu ^G _{ \le \ell } (x ; y) \ge \eta$.
We say that $y$ is \emph{strongly $(\le \ell,\eta)$-reachable from~$x$ in $G$} if for any colour $c_0$, $y$ is $(\le \ell,\eta)$-reachable from $x$ in $G  - \{yz \in E(G): c(yz) = c_0\}$.
Equivalently, $y$ is strongly $( \le  \ell,\eta)$-reachable from~$x$ in~$G$ if $\mu ^G _{  \le \ell } (x ; y ,C(G) \setminus c_0)  \ge \eta $ for all colours~$c_0 \in C(G)$.

\begin{proposition} \label{prop:reachable}
Let $\ell \in \mathbb{N}$ and let $\eta >0$.
Let $G$ be an edge-coloured graph on $n$ vertices.
Let $x,y,v$ be distinct vertices in $V(G)$.
\begin{enumerate}[label={\rm(\roman*)}]
 	\item If $y$ is strongly $(\le  \ell,\eta)$-reachable from~$x$, then for any colour $c_0$, we have $\mu^{G \setminus v} _{ \le \ell}(x ; y, C(G) \setminus c_0) \ge \eta - \ell^2/n $.
\end{enumerate} 
If $y$ is not strongly $(\le  \ell,\eta)$-reachable from~$x$ but is $(\le  \ell, 2 \eta)$-reachable, then 
 \begin{enumerate}[label={\rm(\roman*)}, resume]
	\item there exists a unique colour $c_{y}$ such that $ \mu^{G} _{\le \ell}(x; y, c_{y} ) \ge \eta$;
	\item $ \mu^{G \setminus v} _{ \le \ell} (x ; y,c_y)  \ge \eta - \ell^2 / n$.
\end{enumerate}

\end{proposition}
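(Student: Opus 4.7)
The plan is to reduce all three assertions to a single elementary \emph{path-deletion} estimate. Fix any $v \in V(G) \setminus \{x,y\}$. For each $\ell' \le \ell$, the number of properly coloured paths of length $\ell'$ from $x$ to $y$ whose interior contains $v$ is at most $(\ell'-1) n^{\ell'-2}$: first choose one of the $\ell'-1$ interior positions for $v$, then fill in each of the remaining $\ell'-2$ interior vertices in at most $n$ ways. Dividing by $n^{\ell'-1}$ and summing over $\ell' \le \ell$, the total $\mu$-mass that can be destroyed by removing $v$ is at most $\sum_{\ell'=1}^{\ell}(\ell'-1)/n = \ell(\ell-1)/(2n) \le \ell^2/n$. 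The estimate is unaffected if we further restrict the colour of the edge at $y$ to any prescribed set.

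For (i), by definition $y$ being strongly $(\le \ell,\eta)$-reachable from $x$ means $\mu^G_{\le \ell}(x;y,C(G)\setminus c_0) \ge \eta$ for every colour $c_0$. Any properly coloured path from $x$ to $y$ in $G$ that avoids $v$ survives in $G \setminus v$, so the path-deletion estimate yields $\mu^{G\setminus v}_{\le \ell}(x;y,C(G)\setminus c_0) \ge \eta - \ell^2/n$ immediately.

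For (ii) and (iii), pick any colour $c_0$ witnessing the failure of strong reachability, so that $\mu^G_{\le \ell}(x;y,C(G)\setminus c_0) < \eta$. Splitting $\mu^G_{\le \ell}(x;y) \ge 2\eta$ according to the colour of the last edge at $y$ gives $\mu^G_{\le \ell}(x;y,c_0) > \eta$, so we may take $c_y := c_0$. The step that I expect to need the most care is uniqueness: if two distinct colours $c_y$ and $c_y'$ both satisfied $\mu^G_{\le \ell}(x;y,c_y), \mu^G_{\le \ell}(x;y,c_y') \ge \eta$, then for any single colour $c_0$ at least one of $c_y, c_y'$ lies in $C(G) \setminus c_0$, forcing $\mu^G_{\le \ell}(x;y,C(G)\setminus c_0) \ge \eta$ for every $c_0$ — contradicting the assumption that $y$ is not strongly reachable from $x$. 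Finally, (iii) follows by applying the path-deletion estimate to the restricted family $\mathcal{P}^G_{\le \ell}(x;y,c_y)$, exactly as in (i).
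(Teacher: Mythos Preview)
Your proof is correct and follows essentially the same approach as the paper's. The paper establishes the identical path-deletion bound $(\ell'-1)n^{\ell'-2}$ and applies it in the same way; for (ii) the paper simply states that it follows from the definitions, whereas you spell out the uniqueness argument explicitly, but the content is the same.
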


\begin{proof}
For each $\ell' \in \mathbb{N}$, $v$ is in at most $(\ell'-1) n^{\ell'-2}$ paths of length $\ell'$ from $x$ to~$y$.
Hence for all $\ell' \le \ell$,
\begin{align*}
\mu^{G \setminus v} _{\ell'}(x ; y, C(G) \setminus c_0) 
& \ge \mu^{G} _{\ell'}(x ; y , C(G) \setminus c_0) - (\ell'-1)/n\\
& \ge  \mu^{G} _{\ell'}(x ;  y ,C(G) \setminus c_0 ) - \ell/n,
\end{align*}
so (i) holds.
The definitions of $(\le  \ell,2 \eta)$-reachable and strongly $(\le  \ell,\eta)$-reachable implying~(ii).
The proof of~(i) can be adapted to prove (iii).
\end{proof}

\begin{lemma} \label{lma:connect}
Let $0 < 1/n \ll \eps < 1/2$.
Suppose that $G$ is an edge-coloured graph on $n$ vertices with $\delta^c (G) \ge ( 1 / 2 + \eps) n +2$.
Let $x,y \in V(G)$ be distinct and let $c_x, c_y$ be any two colours.
Then there exists a properly coloured path $P$ from $x$ to $y$ of length at most $\eps^{-2}$ such that $C_P(x) \ne \{c_x\}$ and $C_P(y) \ne \{c_y\}$.
\end{lemma}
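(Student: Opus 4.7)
The plan is to reduce the lemma to a short-path connectivity question in a modified graph and solve that via a two-sided colour-aware BFS. Set
\[
H := G - \{xz \in E(G) : c(xz) = c_x\} - \{yz \in E(G) : c(yz) = c_y\}.
\]
Each vertex loses at most two colours from its palette (namely $c_x$ via the edge to $x$, and $c_y$ via the edge to $y$, and only if those colours are not witnessed elsewhere at the vertex), so $\delta^c(H) \ge \delta^c(G) - 2 \ge (1/2+\eps)n$, with the slightly stronger bound $d^c_H(x), d^c_H(y) \ge (1/2+\eps)n + 1$ at the endpoints. Any properly coloured $x$--$y$ path in $H$ is properly coloured in $G$, and automatically has its edge at $x$ of colour other than $c_x$ and its edge at $y$ of colour other than $c_y$. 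Thus it suffices to produce a properly coloured $x$--$y$ path of length at most $\eps^{-2}$ in $H$.

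For this, I would run a two-sided BFS that records the colour of the final edge. For $i \ge 0$ and a colour $c$, let $L_i^c$ denote the set of vertices $v$ reachable from $x$ in $H$ by a properly coloured path of length at most $i$ whose last edge has colour $c$, and set $L_i := \bigcup_c L_i^c$; define $R_i^c$ and $R_i$ symmetrically from $y$. A splice at $v \in L_i^{c_1} \cap R_j^{c_2}$ with $c_1 \ne c_2$, realised by a PC $x$--$v$ half-path and a PC $y$--$v$ half-path whose interiors are vertex-disjoint, yields the desired PC $x$--$y$ path of length at most $i + j \le \eps^{-2}$. Since $|L_1|, |R_1| > n/2$ already, $L_1 \cap R_1$ is non-empty; the substantive work is arranging the splice so that the colour and disjointness conditions both hold.

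To do so, I would grow $L_i$ and $R_j$ for $O(\eps^{-1})$ rounds using the following dichotomy at each $v \in L_i$: either $v \in L_i^{c_1} \cap L_i^{c_2}$ for two distinct colours, in which case every $u \in N_H(v)$ lies in $L_{i+1}$; or $v$ is reached by a unique entry colour $c^*$, in which case $v$ extends to the $\ge d^c_H(v) - 1 \ge (1/2+\eps)n - 1$ neighbours $u$ with $c(vu) \ne c^*$. A standard averaging argument then yields $|L_{i+1}| \ge |L_i| + \eps n$ until $|L_i| \ge (1-\eps)n$, and similarly for $R_j$. Once both sides nearly cover $V(H)$, a counting/pigeonhole argument over the many available half-paths produces a splice vertex with distinct entry colours and vertex-disjoint interiors. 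I expect the main obstacle to be this splice step: jointly controlling the distinct-colour constraint at $v$ and the disjointness of the two halves against the worst-case colour distribution at the frontier. The bound $\delta^c(G) \ge (1/2+\eps)n + 2$, which translates to $\delta^c(H) \ge (1/2+\eps)n$, is calibrated precisely so that the counting argument ruling out simultaneous failure of the two constraints goes through.
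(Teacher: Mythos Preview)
Your reduction to $H$ is fine and matches the paper's first move, but the heart of the argument is missing. The claim that a ``standard averaging argument'' gives $|L_{i+1}| \ge |L_i| + \eps n$ until $|L_i| \ge (1-\eps)n$ is not true in this setting, and this is precisely where the difficulty lies. A vertex $v$ that sits in $L_i$ with a \emph{unique} entry colour $c^*$ can only extend along edges of colour $\ne c^*$; nothing prevents all of those neighbours from already lying in $L_i$, so $v$ contributes no new vertices. Once $|L_i|$ reaches roughly $(1/2+\eps)n$ your counting gives no further growth, and there is no mechanism forcing the frontier past $(1-\eps)n$. The paper handles exactly this obstruction: it distinguishes \emph{strongly} reachable vertices $S_\ell$ (reachable with many paths avoiding any prescribed last colour) from merely reachable vertices $T_\ell$ (which have a unique preferred last colour $c_w$), and tracks the weighted potential $2|S_\ell| + |T_\ell|$. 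Either this potential grows by $\eps^2 n$ per step, forcing $|S_\ell| > n/2$ within $\eps^{-2}$ steps, or it stalls; the stalled case is disposed of by a delicate edge-count on an auxiliary digraph built from the stuck set $W = T_\ell \cap T_{\ell+1}$ (the paper's Claim inside the proof), which is where the hypothesis $\delta^c \ge (1/2+\eps)n$ is genuinely used.

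Your two-sided splice also carries a real cost that the paper sidesteps. Ensuring that the $x$-half and $y$-half are internally vertex-disjoint while simultaneously meeting at a vertex with distinct entry colours is not a pigeonhole afterthought: with paths of length up to $\eps^{-2}$, each half may occupy $\Theta(\eps^{-2})$ vertices, and you have no control over \emph{which} half-paths realise membership in $L_i^c$, $R_j^{c'}$. The paper avoids this entirely by running the expansion from $x$ only: once $|S_\ell| > n/2$ it meets $N_H(y)$, and strong reachability lets you pick a path into $s \in S_\ell \cap N_H(y)$ whose last colour differs from $c(sy)$, so appending the edge $sy$ finishes with no disjointness issue. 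If you want to salvage your outline, you would need to introduce the strong/weak dichotomy and the potential function, at which point the two-sided BFS becomes unnecessary.
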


\begin{proof}
Let $\ell_0 : = \lfloor \eps^{-2} \rfloor$ and let $\eta$ be such that $1/n \ll \eta \ll \eps $.
Let $G, x,y , c_x, c_y$ be as defined in the lemma.
Remove all edges at $x$ with colour~$c_x$ and all edges at $y$ with colour~$c_y$.
So $d(x), d(y) \ge (1/2+\eps)n $ and $ d^c(v) \ge (1/2+\eps)n$ for all $v \in V(G) \setminus \{ x , y \}$.
Therefore to prove the lemma, it suffices to show that there exists a properly coloured path from $x$ to $y$ of length at most $\ell_0$.
Note that for all $v \in V(G)$, all $\ell \le \ell_0$ and all $P \in P_{\ell}^G(x;v)$, we may assume that $y \notin V(P)$ or else the lemma holds.

For each $\ell \in \mathbb{N}$, let $S_{\ell}$ be the set of vertices $v \in V(G) \setminus  x$ that are strongly $(\le \ell, \eta^{\ell})$-reachable from~$x$, and let $T_{\ell}$ be the set of vertices $v \in V(G) \setminus ( S_{\ell} \cup x ) $ that are $( \le \ell, 2 \eta^{\ell})$-reachable from~$x$.
Since a $(\le \ell, 2 \eta^{\ell})$-reachable vertex from~$x$ is also $(\le \ell+1, 2 \eta^{\ell+1})$-reachable from~$x$ and a similar statement for strongly reachable, we have
\begin{align}
S_{\ell} \subseteq S_{\ell+1} \text{ and } S_{\ell} \cup T_{\ell} \subseteq S_{\ell+1} \cup T_{\ell +1} \text{ for all }\ell \in \mathbb{N}. \label{eqn:subset}
\end{align}
Also $S_1 = \emptyset$  and $T_1$ is the set of vertex $v \in N(x)$, so
\begin{align}
|T_1| \ge ( 1 /2 + \eps) n. \label{eqn:T1}
\end{align}
Suppose that there exists $s \in S_{\ell} \cap N(y)$.
Let $P \in \mathcal{P}^{G}_{\ell}(x ;  s)$ with $c(sy) \notin C_P(s)$ (which exists as $s$ is strongly $(\le  \ell,\eta)$-reachable from~$x$).
Note that $Py$ is a properly coloured path from $x$ to $y$ of length at most $\ell +1$.
Thus we may assume that $|S_{\ell}| \le (1/2- \eps ) n $ for all $\ell < \ell_0$.
If $2 |S_{\ell+1}| + |T_{\ell+1}| \ge 2 |S_{\ell}| + |T_{\ell}| +  \eps^2 n$ for all $1 \le \ell < \ell_0-1$, then together with~\eqref{eqn:T1} we have $2 |S_{\ell_0-1}| + |T_{\ell_0-1}| \ge 3n/2$.
Hence $|S_{\ell_0-1}|  \ge n/2$, a contradiction.
Therefore, we may assume that for some $\ell < \ell_0-1$,
\begin{align}
2 |S_{\ell+1}| + |T_{\ell+1}| < 2 |S_{\ell}| + |T_{\ell}| +    \eps^2 n.  \label{eqn:q1}
\end{align}
By~\eqref{eqn:subset}, we have
\begin{align}
|(S_{\ell+1}\cup T_{\ell+1}) \setminus (S_{\ell}\cup T_{\ell})| \le   \eps^2 n . \label{eqn:q2}
\end{align}
Let $W := T_{\ell} \cap T_{\ell+1}$.
Recall that $|S_{\ell}| \le  (1/2- \eps ) n$. 
By \eqref{eqn:subset} and~\eqref{eqn:T1}, we have 
\begin{align*}
	|T_{\ell}| \ge |S_{\ell } \cup T_{\ell}| - |S_{\ell}| \ge |T_1| -  (1/2- \eps ) n \ge2  \eps n. 
\end{align*}
Since $T_{\ell} \setminus W = T_{\ell} \setminus T_{\ell +1 } \subseteq S_{\ell+1} \setminus S_{\ell} \subseteq (S_{\ell+1} \cup T_{\ell+1} ) \setminus ( S_{\ell} \cup T_{\ell})$ by~\eqref{eqn:subset}, \eqref{eqn:q2} implies that
\begin{align}
	\label{eqn:TW1}
	|T_{\ell} \setminus W | \le  \eps^2 n 
\end{align} 
and so
\begin{align}
	\label{eqn:W2}
|W| \ge |T_{\ell}| - |T_{\ell} \setminus W| \ge 2 \eps n   -  \eps^2 n  \ge \eps n.
\end{align}

For each $w \in W \subseteq T_{\ell}$, Proposition~\ref{prop:reachable}(ii) implies that there exists a unique colour $c_{w}$ such that $ \mu^{G} _{ \le \ell }(x ; w, c_{w}) \ge \eta^{\ell}$.
Define an auxiliary digraph $H$ with on $V(G) \setminus x$ and edge set $E(H) := \{ wv : w \in W, v \in N_G(w) \setminus x$ and $c(wv) \ne c_w \}$.
Note that for each $w \in W$, we have $d_H^+(w) \ge d^c_G(w)-1 \ge ( 1+\eps )n/2$ and so 
\begin{align}
	e(H) \ge ( 1+\eps )n|W|/2. \label{eqn:E(H)}
\end{align}

We now bound $e(H)$ from above (to obtain a contradiction) in the following claim.
\begin{claim} \label{clm:e(H)}
Let $e_H(X,Y)$ denote the number of edges from $X$ to $Y$.
Then
	\begin{enumerate}[label={\rm(\roman*)}]		
		\item \label{itm:W1}
		$e_H(W, (S_{\ell+1}\cup T_{\ell+1}) \setminus (S_{\ell}\cup T_{\ell}) )	<  \eps^2 n |W|$;
		\item \label{itm:W5}
		$e_H( W , T_{\ell} \setminus  W)  < \eps^2 n |W|$;
		\item \label{itm:W2}
		$e_H(W,V(G) \setminus (S_{\ell+1} \cup T_{\ell+1} \cup x )) < 4 \eta \eps^{-1} n |W|$;
		\item \label{itm:W3}
		$e_H( W , S_{\ell} )  < 2 \eta n|W|$;
		\item \label{itm:W4}
		$e_H( W , W )  <  (1/2 - \eps + 2 \eta) n |W|$.
	\end{enumerate}
\end{claim}

\begin{proof}[Proof of claim]
Note that \ref{itm:W1} and~\ref{itm:W5} follow from \eqref{eqn:q2} and~\eqref{eqn:TW1}, respectively.
To see \ref{itm:W2}, note that if $wv \in E(H)$ with $w \in W$ and $v \in V(G) \setminus x$ and $P \in \mathcal{P}^{G \setminus v}_{\ell'}(x ; w,c_w)$, then $Pv$ is a properly coloured path of length~$\ell'+1$ from~$x$ to~$v$.
By Proposition~\ref{prop:reachable}(iii), for each $v \in V(G)\setminus x$,
\begin{align*}
	\mu^{G} _{\le \ell+1}(x, v) & \ge \frac1n \sum_{w \in N_H(v)}  \mu^{G \setminus x } _{\le \ell}(x ; w, c_w)
 \ge \eta^{\ell} e_H( W,v)/2n.
\end{align*}
Therefore, for all $v \in V(G) \setminus (  S_{\ell+1} \cup T_{\ell+1} \cup x )$, we have $e_H( W,v ) < 4 \eta n \le 4 \eta \eps^{-1} |W|$, where the last inequality is due to~\eqref{eqn:W2}.
Thus \ref{itm:W2} holds.

Consider the edge $ws \in E(H)$ with $w \in W$ and $s \in S_{\ell}$.
If $P \in \mathcal{P}^{G \setminus v}_{\ell'}(x ; s,C(G) \setminus c(ws))$, then $Pw$ is a properly coloured path of length $\ell' +1$ from $x$ to~$w$ with $C_{P}(w) \ne \{c_w\}$.
We must have $e_H(w,S_{\ell}) < 2\eta n$ for all $w \in W$, which in turn implies~\ref{itm:W3}.
Indeed, if $e_H(w , S_{\ell}) \ge 2\eta n$, then by Proposition~\ref{prop:reachable}(iii),
\begin{align*}
	\mu^{G} _{\le \ell+1}(x ; w, C(G) \setminus c_w) & \ge \frac1n  \sum_{s \in N_H(w) \cap S_{\ell}} \mu^{G \setminus v}_{\le \ell}(x ;  s,C(G) \setminus c(ws)) \\
& \ge \frac1n e_H(w, S_{\ell})(\eta^{\ell} - \ell^2 /n) \ge \eta^{\ell+1}
\end{align*}
and so $w \in S_{\ell+1}$ (as $w \in W \subseteq T_{\ell+1}$ implying that $\mu^{G} _{\le \ell+1}(x; w, c_w) \ge \eta^{\ell+1}$), a contradiction.

By a similar argument with ($T_{\ell}$  playing the role of $S_{\ell}$), we deduce that every $w \in W \subseteq T_{\ell+1}$ has less than $2 \eta n $ edges $w w'$ in~$G$ such that $w' \in W \subseteq T_{\ell}$ and $c_w \ne c(w w') \ne c_{w'}$.
This means that, in $H$, each $w \in W$ is contained less than $2 \eta n $ $2$-cycles. 
Since each $w \in W$ is incident to at most $(1/2-\eps)n$ edges of the same colour in~$G$,
we have $e_H(W,w) < (1/2-\eps)n+ 2 \eta n = (1/2 - \eps + 2 \eta) n$ implying~\ref{itm:W4}.
\end{proof}

By Claim~\ref{clm:e(H)}, we deduce that 
\begin{align*}
e(H) & 
 \le \left( \eps^2 +\eps^2 + 4 \eps^{-1} \eta + 2 \eta   + 1/2- \eps + 2\eta \right)n |W| 
 < (1+ \eps) n |W|/2,
\end{align*}
contradicting~\eqref{eqn:E(H)}.
This complete the proof of Lemma~\ref{lma:connect}.
\end{proof}

We now prove Lemma~\ref{lma:abscycle}.

\begin{proof}[Proof of Lemma~\ref{lma:abscycle}]
Let $\eps_0$ be such that $1/n \ll \eps_0  \ll \eps$.
Apply Lemma~\ref{lma:absorbing} and obtain a family $\mathcal{F}$ of vertex-disjoint properly coloured paths each of length~$3$ such that for all $x \in V(G)$ and for all distinct vertices $x_1,x_2, y_1,y_2 \in V(G)$ with $x_1x_2$, $y_1y_2 \in E(G)$,
\begin{align*}
|\mathcal{F}| & \le 3 \gamma^{1/2} n, &
|\mathcal{L}(x) \cap \mathcal{F}| & \ge 3 \gamma n,  &
|\mathcal{L}(x_1, x_2;y_1, y_2) \cap \mathcal{F}| & \ge  3 \gamma n.
\end{align*}
Let $P_1, \dots, P_{|\mathcal{F}|}$ be paths in $\mathcal{F}$.
Let $x_i$ and $y_i$ be endvertices of $P_i$ for all $i \le |\mathcal{F}|$.
Suppose that for $j \le |\mathcal{F}|$, we have already found $Q_1, \dots, Q_{j-1}$ such that
\begin{enumerate}[label={\rm (\alph*)}]
	\item for all $i < j$, $Q_i$ is a path from $y_i$ to $x_{i+1}$ of length at most $\eps_0^{-2}$;
	\item for all $i < j$, $P_{i} Q_{i} P_{i+1}$ is a properly coloured path;
	\item $Q_1, \dots, Q_{j-1}, P_{j+1}, \dots, P_{|\mathcal{F}|}$ are disjoint.
\end{enumerate}
We now find $Q_{j}$ as follows. 
Let $C_{P_j}(y_j) = \{c_y\}$, let $C_{P_{j+1}}( x_{j+1} ) = \{c_x\}$ and let $W : = ( \bigcup_{i \le |\mathcal{F}|}  V(P_i) \cup \bigcup_{i' <j} V(Q_{i'}) ) \setminus \{y_j,x_{j+1}\}$, where we take $P_{|\mathcal{F}|+1} = P_1$ and $x_{|\mathcal{F}|+1} = x_1$.
Note that $|W| \le 3 \gamma^{1/2} n (4+\eps^{-2}_0) \le \eps n/2$.
Let $G' = G \setminus W$.
So $\delta^c(G') \ge (1/2 + \eps/2) n \ge (1/2 + \eps_0)|G'|$.
Apply Lemma~\ref{lma:connect} and obtain a properly coloured path $Q_j$ in $G'$ from $y_{j}$ to $x_{j+1}$ of length at most $\eps_0^{-2}$ such that $C_{Q_j}(y_j) \ne \{c_y\}$ and $C_{Q_j}(x_{j+1}) \ne \{c_x\}$.
Thus we have found $Q_1, \dots, Q_{|\mathcal{F}|}$.

Let $C := P_1 Q_1 P_2 \dots P_{|\mathcal{F}|} Q_{ |\mathcal{F}| }$ be a properly coloured cycle in~$G$. 
Note that $|C| \le 3 \gamma^{1/2} n (4+\eps^{-2}_0) \le \eps n/2$.
Let $\mathcal{P}$ be any set of $k$ vertex-disjoint properly coloured paths in $G \setminus V(C)$ with $k \le \gamma n$.
Let $\mathcal{P'}$ be the set of properly coloured paths obtained from~$\mathcal{P}$ by breaking up every path $P \in \mathcal{P}$ with $|P| \le 3$ into isolated vertices. 
Thus $|\mathcal{P}'| \le 3 \gamma n$ and for each $P \in \mathcal{P}'$, $|P| =1$ or $|P| \ge 4$.
For each $P \in \mathcal{P}'$, there exists a distinct $P' \in \mathcal{F}$ such that $P' \in \mathcal{L}(V(P))$ if $|P'| = 1$, and $P' \in \mathcal{L}(u_1,u_2;u_{\ell'} u_{\ell'-1})$ if $P = u_1 u_2 \dots u_{\ell'}$.
By Proposition~\ref{prp:abspath} and the definition of an absorbing path for a vertex, there exists a properly coloured cycle $C'$ with vertex set $V(C) \cup V (\bigcup \mathcal{P} )$.
\end{proof}

\section{Properly coloured 1-path-cycle}
\label{sec:1-path-cycle}

A \emph{$1$-path-cycle} is a disjoint union of cycles and at most one path.
In this section, we prove the following lemma, which immediately implies Lemma~\ref{lma:pathcovers}.

\begin{lemma} \label{lma:1-path-cycle}
Let $0 < 1/n \ll \beta \ll \eps \ll 1/2 < \delta$.
Suppose that $G$ is a critical edge-coloured graph on $n$ vertices with $\delta^c(G)  \ge \delta n+1 $.
Then one of the following statements holds
\begin{enumerate}[label={\rm(\roman*)}]
\item $G$ contains a properly coloured $1$-path-cycle $H$ such that $|H| \ge \min \{ ( 3 \delta + \beta )n/2, n \}$ and every cycle in $H$ has length at least $\beta n /100$;
\item $G$ is $(\delta, \eps)$-extremal.
\end{enumerate}
\end{lemma}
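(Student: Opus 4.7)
The plan is to select, among all properly coloured $1$-path-cycles $H$ in $G$ whose cycle components each have length at least $\beta n/100$, one that maximizes $|H|$ and, subject to that, maximizes the number of vertices on the path component. Assume toward contradiction that $|H| < \min\{(3\delta+\beta)n/2, n\}$ and that $G$ is not $(\delta,\eps)$-extremal; the goal is to derive a contradiction. A first sub-step is to show that $H$ may be assumed to have a genuine path component $P = v_1 v_2 \cdots v_k$: if $H$ is a union of cycles only, split any cycle (which has at least $\beta n/100$ vertices) at an edge—either the resulting $1$-path-cycle is legal, or by its illegality we immediately find an extension elsewhere.

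For each endpoint $v \in \{v_1, v_k\}$ the maximality of $|H|$ precludes an edge from $v$ into $R := V(G) \setminus V(H)$ whose colour differs from the colour of the unique edge of $P$ at $v$; otherwise we would extend $P$. Combined with $\delta^c(G) \ge \delta n + 1$, this forces that at least $\delta n$ of the colours incident to $v$ are realized on edges into $V(H)$, on a large target set $X(v) \subseteq V(H)$ of size $\ge \delta n - O(1)$. I would then run the standard rotation procedure: whenever $v_1 v_i \in E(G)$ with $c(v_1 v_i)$ distinct from $c(v_{i-1}v_i)$ and from $c(v_i v_{i+1})$, swap $v_{i-1}v_i$ for $v_1 v_i$ to obtain a new $1$-path-cycle $H'$ of the same length whose path has endpoint $v_{i-1}$, and iterate (including analogous rotations at the opposite endpoint). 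The rotation closes to give a set $T$ of alternative endpoints of size at least $\eps n$, since otherwise very few colours at $v_1$ can be absorbed into $P$ and we contradict the colour degree bound.

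Each $t \in T$ inherits the same non-extensibility restriction as $v_1$, so it has $\ge \delta n - O(1)$ colours going to vertices of $H$ in a way that cannot be used to enlarge $H$. Because $G$ is critical, every monochromatic subgraph is a star, so the colours fed to $t$ pin down specific star-centres $a$ with a unique dominant colour $c_a$. I would then define $B$ to consist of $T$ together with other vertices (including those of $R$) displaying the same kind of restricted colour profile, and $A$ to consist of the corresponding star-centres. Verifying \ref{itm:A1}--\ref{itm:A3}: $|A| \ge (\delta - \eps)n$ follows from counting distinct colours across $t \in T$; $|B| \ge (1-\delta-\eps)n$ from a complementary count using $|H| + |R| = n$; and the near-monochromatic bipartite structure between $A$ and $B$ is precisely what the rotation analysis produces. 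If instead $|T|$ or the target multiplicities exceed the extremal threshold, we use a vertex of $R$ together with a rotation-endpoint $t \in T$ to perform an augmentation, extending $H$ and contradicting maximality.

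The principal obstacle I anticipate is twofold. First, rotations must not create or leave a cycle component of length below $\beta n/100$; this is handled by showing that any troublesome rotation either can be replaced by an augmentation (using the slack $\beta n$ baked into the target $|H|$) or occurs so rarely that we can avoid it when constructing the reachable set $T$. Second, the regime $\delta > 2/3$ where $\min\{(3\delta+\beta)n/2, n\} = n$ leaves essentially no room outside $H$ and demands a Hamilton $1$-path-cycle; here I would exploit the sharper bound $\delta^c(G) \ge \delta n + 1$ to argue that any single vertex in $R$ leads, via rotation plus an absorbing argument at the two endpoints of $P$, either to an extension or directly to the extremal partition, since the structural rigidity of critical graphs makes near-Hamiltonicity and extremality the only surviving alternatives.
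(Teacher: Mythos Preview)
Your rotation-based outline is in the right direction and overlaps with the paper's first layer (what the paper formalises as the sets $X_1(H)$, $X_2(H)$ of endpoint/colour pairs reachable by one or two switches). However, there is a genuine structural gap in your proposal: the rotation argument by itself cannot produce a set $B$ of size $(1-\delta-\eps)n$.

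Concretely, what the rotation analysis yields is this. If the set of reachable endpoint pairs is large (at least $(\delta+\alpha)n$ on both sides), a bipartite counting argument between the $x$-side and $y$-side reachable sets forces $|H|\ge (3\delta+\alpha/2)n/2$ (this is Lemma~\ref{lma:AB} in the paper). If instead the reachable set is small, a careful analysis (Lemma~\ref{lma:X1X2}) identifies star-centres $W^*$ with $|W^*|\ge(\delta-o(1))n$ and leaves $Z^*$ with $|Z^*|\ge(2\delta-1-o(1))n$. The point is that $(2\delta-1)n$, not $(1-\delta)n$, is the natural output: for $\delta$ just above $1/2$ this is tiny, and your ``complementary count using $|H|+|R|=n$'' does not close the gap, since $|R|$ can be as small as $n-(3\delta+\beta)n/2$, which is negative once $\delta>2/3$ and at best about $n/4$ near $\delta=1/2$. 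You also do not verify the degree condition $d_G(b)\le(\delta+\eps)n$ in~\ref{itm:A3} for your proposed $B$.

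The paper's missing ingredient, which your sketch does not contain, is an \emph{iterative bootstrap} (Lemma~\ref{lma:1-path-cycle2}): one removes the found $Z^*$ from $G$, observes that $G\setminus Z^*$ still has $\delta^c\ge(\delta-o(1))n$ but now on only $(3/2-\delta)n$ vertices, so the effective density jumps to $\delta':=\frac{\delta-o(1)}{3/2-\delta}>\delta$, and one applies the whole argument again to $G\setminus Z^*$. After boundedly many iterations the effective density exceeds $2/3$, at which point $(2\delta'-1)\ge 1-\delta'$ and the extremal structure on the reduced graph, combined with the removed layers of $Z^*$, assembles into the full $(\delta,\eps)$-extremal partition of $G$. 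Without this recursion the argument stalls at $|B|\approx(2\delta-1)n$.
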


To prove Lemma~\ref{lma:1-path-cycle}, we need the following terminology. 
Let ${\bf x} = (x,c_x)$ and ${\bf y} = (y,c_y)$ be pairs with vertices $x,y \in V(H)$ and colours $c_x,c_y$.
For $\rho>0$, we say that $H$ is a \emph{$1$-path-cycle with parameters $\rho$-$( {\bf x} ; {\bf y} )$} if $H$ satisfies the following four properties:
\begin{enumerate}[label={\rm (\alph*)}]
	\item $H$ is a properly coloured $1$-path-cycle;
	\item every cycle in~$H$ has length at least $\rho n$;
	\item the path component~$P$ in $H$ has length at least $\rho n$ with endvertices $x$ and $y$;
	\item$C_H(x) = \{ c_{x} \}$ and $C_H(y) = \{c_{y} \}$.
\end{enumerate}
Note that $c_{x}$ and $c_{y}$ are precisely the colours of the edges in $P$ (and $H$) incident with $x$ and $y$, respectively.
The order of ${\bf x}$ and ${\bf y} $ is important.
If $\rho$ is known from the context, we simply write $( {\bf x} ; {\bf y} )$ instead of $\rho$-$( {\bf x} ; {\bf y} )$.

Orient the cycles of $H$ into directed cycles arbitrarily and orient the path $P$ into a directed path from $x$ to $y$.
For each $v \in V(H) \setminus  y$, define $c_+(v) $ to be $c( v v_+)$, where $v_+$ is the successor of~$v$, and for each $w \in V(H) \setminus x$, define $c_-(w) $ to be $c( w w_-)$, where $w_-$ is the ancestor of~$w$.
From now on every $1$-path cycle is assumed to be oriented as above.
For an oriented cycle $C$ and $u,v \in V(C)$, we write $uC^+v$ for the path $u u_+ \dots v_- v$ in~$C$ and $uC^-v$ for the path $u u_- \dots v_+ v$ in~$C$.

\begin{lemma} \label{lma:parameter}
Let $\rho > 0$.
Let $G$ be an edge-coloured graph on $n$ vertices with $\delta^c (G) \ge \rho n+1$.
Suppose that $H$ is a properly coloured $1$-path-cycle in $G$ of maximum order such that every cycles in $H$ has length at least $\rho n$, and that $|H|< n$.
Then there exists a $1$-path-cycle $H'$ with parameters $\rho$-$( {\bf x} ; {\bf y} )$ such that $V(H') = V(H)$.
\end{lemma}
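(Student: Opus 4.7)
The plan is to work entirely on the vertex set $V(H)$ by choosing a rearrangement $H^*$ whose path component is as long as possible. Let $\mathcal{H}$ be the family of properly coloured $1$-path-cycles $H^*$ in $G$ with $V(H^*) = V(H)$ and every cycle of length at least $\rho n$, and pick $H^* \in \mathcal{H}$ whose path component $P^*$ is of maximum length (possibly empty). Since $H \in \mathcal{H}$ the family is non-empty. If I can show that $P^*$ has length at least $\rho n$, then setting $H' := H^*$, letting $x, y$ be the endvertices of $P^*$ and $c_x, c_y$ the unique colours of $P^*$ at $x$ and $y$ respectively yields a $1$-path-cycle with parameters $\rho$-$(\mathbf{x};\mathbf{y})$. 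So all of the work is in proving this length bound.

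I argue by contradiction: suppose $P^* = y_1 y_2 \cdots y_k$ has fewer than $\rho n + 1$ vertices. First dispose of the degenerate case $k = 0$: then $H^*$ is a disjoint union of cycles (if $V(H) = \emptyset$ the lemma is vacuous), and breaking any cycle $C$ at one edge produces a $1$-path-cycle on $V(H)$ whose path is as long as $C$, contradicting the maximality of $P^*$. So $k \ge 1$.

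Now consider $y_k$ and the colour $c' := c(y_{k-1} y_k)$ of its unique incident edge in $H^*$ (undefined when $k = 1$). From $\delta^c(G) \ge \rho n + 1$ there are at least $\rho n$ distinct colours at $y_k$ different from $c'$, and hence a set $N^*$ of at least $\rho n$ neighbours $u$ of $y_k$ in $G$ with $c(y_k u) \ne c'$. I split according to whether $N^*$ escapes $V(H)$. If some $v \in N^* \setminus V(H)$ exists, then $H^* \cup \{y_k v\}$ is a properly coloured $1$-path-cycle (new path $y_1 \cdots y_k v$) on $V(H) \cup \{v\}$ with every cycle still of length at least $\rho n$, contradicting the maximum order of $H$. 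Otherwise $N^* \subseteq V(H)$, and since $|V(P^*) \setminus \{y_k\}| \le \rho n - 1 < |N^*|$, there is some $u \in N^*$ lying in a cycle $C$ of $H^*$. I orient $C$ so that $c(y_k u) \ne c(u u_+)$, which is possible because $c(u u_+) \ne c(u u_-)$ by the proper colouring of $C$, and then concatenate $P^*$, the edge $y_k u$, and the path $u u_+ \cdots u_-$ obtained from $C$ by deleting the edge $u u_-$. The result is a properly coloured path on $V(P^*) \cup V(C)$ of length at least $|C| \ge \rho n$. Replacing $P^* \cup C$ inside $H^*$ by this path gives another member of $\mathcal{H}$ with strictly longer path, contradicting the choice of $H^*$.

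The main conceptual step is this two-pronged dichotomy: either one extends outside $V(H)$ via a neighbour of the short path's endpoint, contradicting the maximum order of $H$ itself; or one stays inside $V(H)$ and absorbs $P^*$ into a long cycle, contradicting the choice of $H^*$ within $\mathcal{H}$. The main obstacle is keeping the bookkeeping straight for the corner cases $k \in \{0,1\}$ and ensuring the ``length'' bound is interpreted consistently (as the number of vertices of the path) so that the broken or merged path clears the threshold $\rho n$ exactly, rather than falling short by one.
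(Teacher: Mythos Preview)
Your proof is correct and follows essentially the same approach as the paper: examine an endpoint of the short path, use the colour-degree condition to find a suitable neighbour outside the path, and then either escape $V(H)$ (contradicting the maximality of $|H|$) or merge the path into an incident cycle. The only difference is cosmetic: you wrap the argument in a secondary maximization over $\mathcal{H}$, whereas the paper applies the merge step once to $H$ itself and observes that the resulting path already has length at least $|C|\ge\rho n$, so no further maximization is needed.
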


\begin{proof}
If $H$ contains no path component, then $H + w$ is a properly coloured $1$-path-cycle such that every cycle has length at least $\rho n $, where $w \in V(G) \setminus V(H)$.
This contradicts the maximality of $|H|$. 
So we may assume that $H$ contains a path component~$P$.

Suppose that $P$ has length less than~$\rho n$. 
Let $x$ be an endvertex of~$P$.
Let ${\bf x} = (x,c_x)$ with $C_{P}(x) = \{ c_{x} \}$ if $|V(P)| \ge 2$, and $c_x$ is an arbitrary colour otherwise.
Note that $ | N ( {\bf x} ) | \ge  \delta^c (G) -1\ge \rho n \ge |V(P) \setminus x| $.
So there exists $w \in N ( {\bf x} ) \setminus V(P)$.
If $w \notin V(H)$, then we can extend~$P$ thus enlarging~$H$, a contradiction.
Hence $w \in V(H) \setminus V(P)$ and let $C$ be the cycle in~$H$ containing~$w$.
Without loss of generality, we may assume that $c(x w) \ne c_-(w)$.
Then $H' = H + xw - ww_-$ is a properly coloured $1$-path-cycles on vertex set~$V(H)$ such that every cycle in~$H$ has length at least~$\rho n $ and the path component is $P' = w_+ C^+ w x P$ of length at least~$|C| \ge \rho n $.
Therefore $H'$ is a $1$-path-cycles with parameters $( {\bf w_+ }  ; {\bf y } )$, where ${\bf w_+ } = (w_+ , c_{+}(w_+) )$ and ${\bf y} = (y, c_y)$ such that $y$ is the other endvertex of $P'$ and $C_{P'}(y) = \{ c_{y} \}$.
\end{proof}

In the next proposition, we show how we can change from $1$-path-cycle to another one by `switching edges'.

\begin{proposition} \label{rotation}
Let $G$ be an edge-coloured graph.
Let $\rho >0$.
Let $H$ be a $1$-path-cycle in $G$ with parameters $( {\bf x } ; {\bf y } )$, where ${\bf x } = (x,c_x)$ and ${\bf y } = (y,c_y)$.
Suppose that $w \in V(H) \cup N_G( {\bf x} )$ such that  $\dist_H(w,x), \dist_H(w,y) \ge \rho n +1$.
Then 
\begin{enumerate}[label={\rm(\roman*)}]
\item if $c(xw) \ne c_- (w)$, then $H + x w - w w_+$ is a $1$-path-cycle with parameters $( ( w_+,c_+(w_+) ) ; {\bf y})$;
\item if $ c ( x w ) \ne c_+ (w )$, then $H + x w - w w_-$ is a $1$-path-cycle with parameters $( ( w_- , c_-(w_-)) ;  {\bf y})$.
\end{enumerate}
A similar statement holds for $w \in V(H) \cup N_G( {\bf y} )$ with $\dist_H(w,x), \dist_H(w,y) \ge \rho n +1$.
\end{proposition}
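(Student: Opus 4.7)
My plan is a short case analysis based on where $w$ sits in $H$: either (Case~A) $w$ lies on a cycle $C$ of $H$, or (Case~B) $w$ lies on the path $P$ of $H$. The distance hypothesis $\dist_H(w,x),\dist_H(w,y)\ge \rho n+1$ forces $w\notin\{x,y\}$ and rules out no other possibility, so these two cases are exhaustive. I will spell out (i); the claim (ii) follows by swapping the roles of $w_+$ and $w_-$, and the analogous statement for $w \in V(H)\cup N_G({\bf y})$ follows by swapping $x \leftrightarrow y$.

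The properness of the colouring on $H':=H+xw-ww_+$ is a local check at the only two vertices whose incident edge-sets change. At $x$, the surviving original edge has colour $c_x=c_+(x)$, and the new edge $xw$ has colour different from $c_x$ since $w\in N_G({\bf x})$. At $w$, the surviving original edge is $ww_-$ of colour $c_-(w)$, which differs from $c(xw)$ by the hypothesis of~(i). All other vertices keep their edge sets intact.

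The structural analysis then splits as follows. In Case~A, deleting $ww_+$ opens $C$ into a path from $w_+$ to $w$ (traversing $w_-,w_{--},\dots$), and splicing on the new edge $xw$ attaches this to $P$, producing a single path from $y$ to $w_+$ of length $|P|+|C|\ge 2\rho n$; the remaining cycles of $H$ are untouched. In Case~B, the sub-path of $P$ from $x$ to $w$ together with the new edge $xw$ closes into a cycle of length $\dist_H(x,w)+1\ge \rho n+2$, while the sub-path from $w_+$ to $y$ survives as the new path component, of length $\dist_H(w_+,y)\ge \rho n$. In both cases every cycle of $H'$ has length at least $\rho n$, the path component of $H'$ has length at least $\rho n$, its endpoint opposite $y$ is $w_+$ whose sole incident edge in $H'$ is $w_+w_{++}$ of colour $c_+(w_+)$, and $y$ is unaffected, so $H'$ has the claimed parameters $\rho\text{-}((w_+,c_+(w_+));{\bf y})$.

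There is no real obstacle here; this is just bookkeeping for the standard rotation operation. The only points requiring care are keeping the orientation of $C$ (in Case~A) and of $P$ (in Case~B) consistent so that the new endpoint is correctly identified as $w_+$ rather than $w_-$, and checking that in Case~B the lower bound $\rho n+2$ on the new cycle length uses exactly $\dist_H(w,x)\ge \rho n+1$ and in Case~A the lower bound $\rho n$ on the new path comes from $|C|\ge \rho n$, as recorded in the definition of the parameters.
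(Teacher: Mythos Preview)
Your proof is correct and follows essentially the same approach as the paper's: a two-case split according to whether $w$ lies on the path $P$ or on some cycle $C$ of $H$, together with a local properness check at $x$ and $w$ and a verification that both resulting components have size at least $\rho n$. If anything, your write-up is slightly more careful than the paper's in spelling out the properness check and in tracking the orientation so that the new endpoint is identified as $w_+$ with colour $c_+(w_+)$; the paper even has a small typo in the cycle case (it writes the resulting parameters as $((w_-,c_-(w_-));{\bf y})$ when it should be $((w_+,c_+(w_+));{\bf y})$), which you avoid.
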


\begin{proof}
Suppose that $c(xw) \ne c_- (w)$.
If $w$ is in the path component~$P$ of~$H$, then $P + x w - w w_+$ is a properly coloured graph consisting of a cycle $x P w x$ and a path $w_+ P y$ (as $c(xw) \ne c_x$).
Since $\dist_H (w,x), \dist_H (w,y) \ge \rho n +1$, both of these components have size at least $\rho n $. 
Thus $H + x w - w w_+$ is a $1$-path-cycle with parameters $( ( w_+,c_+(w_+) ) ; {\bf y} )$.
If $C$ is the cycle in $H$ containing~$w$, then $P + C + x w - w w_+$ is a properly coloured path $w_+ C_+ w x P y$.
Hence $H + x w - w w_+$ is a $1$-path-cycle with parameters $( ( w_- , c_-(w_-)) ;  {\bf y})$.
Therefore (i) holds, and (ii) holds by a similar argument.
\end{proof}

Let $H$ be $1$-path-cycle in~$G$ with parameters $({\bf x} ; {\bf y})$ and let $H'$ be an  $1$-path-cycle with parameters $({\bf z} ; {\bf y})$ in~$G$ obtained from $H$ by switching one edges. 
Note that we can deduce which edges were involved in the switching by analysing ${\bf z}$ as follows. 
Let ${\bf z} = (z,c_z)$ be a pair with vertex $z \in V(H) \setminus \{x,y\}$ and colour $c_z \in C_H(z)$.
Define the vertex
\begin{align*}
w_{\bf z} : = 
\begin{cases}
z_- & \text{if $c_z = c_+(z)$,}\\
z_+ & \text{if $c_z = c_-(z)$.}
\end{cases}
\end{align*}
Note that $H' = H + x  w_{\bf z} - w_{\bf z} z$ by Proposition~\ref{rotation}.

Let $X_1(H)$ be the set of pairs ${\bf z} = (z,c_z)$ with vertex $z \in V(H)$ and colour $c_z \in C_H(z)$ such that 
\begin{itemize}
		\item $H + x  w_{\bf z} - w_{\bf z} z$ is a $1$-path-cycle with parameters $( {\bf z} ; {\bf y})$;
		\item $\dist_H( w_{\bf z}, x), \dist_H( w_{\bf z}, y) \ge  2\rho n$.
\end{itemize}
Note that $\{ ({\bf z} ;{\bf y} ) : {\bf z}  \in X_1(H)\}$ is a subset of possible parameters of the $1$-path-cycle that can be obtained from $H$ by switching one edge of~$H$ with an edge incident to~$x$. 
We obtain the following properties of $X_1(H)$.

\begin{proposition} \label{prop:X1}
Let $G$ be an edge-coloured graph on $n$ vertices and let $\rho >0$.
Suppose that $H$ is a properly coloured $1$-path-cycle in $G$ of maximum order, and that $H$ has parameters $\rho-( {\bf x}  ; {\bf y} )$.
Let $z \in  N_G ( {\bf x} ) $ such that $\dist_H(z , x), \dist_H( z , y) \ge 2 \rho n +1$.
Then the following statements hold
\begin{enumerate}[label={\rm(\alph*)}]
		\item $N_G ( {\bf x}) \subseteq V(H)$;
		\item if $c(xz ) \ne c_-(z)$, then $(z_+,c_+(z_+)) \in X_1(H)$;
		\item if $c(xz) \ne c_+(z)$, then $(z_-,c_-(z_-)) \in X_1(H)$;
		\item for ${\bf z} \in X_1(H)$, $N_G ( {\bf z} ) \subseteq V(H)$.
\end{enumerate}
\end{proposition}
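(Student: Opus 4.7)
The plan is to prove the four parts in the stated order, reapplying (a) to a rotated $1$-path-cycle to close (d).

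For (a), I argue by contradiction. If there were $z \in N_G({\bf x}) \setminus V(H)$, then $x$ is the endvertex of the path component $P$ of $H$ with $C_H(x) = \{c_x\}$, and $c(xz) \ne c_x$ by the definition of $N_G({\bf x})$. Thus appending $xz$ to $P$ at $x$ yields a properly coloured $1$-path-cycle on $V(H) \cup \{z\}$ whose cycles are unchanged (hence still of length $\ge \rho n$), contradicting the maximality of $|H|$.

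For (b), part (a) gives $z \in V(H)$, so $z_-, z_+$ are defined; combined with $\dist_H(z, x), \dist_H(z, y) \ge 2\rho n + 1$, the hypothesis of Proposition \ref{rotation} is met, and under $c(xz) \ne c_-(z)$, part (i) of that proposition produces a $1$-path-cycle $H' := H + xz - zz_+$ with parameters $\rho$-$((z_+, c_+(z_+)); {\bf y})$. Setting ${\bf z}' := (z_+, c_+(z_+))$, the colour coordinate equals $c_+(z_+)$, so by the case $c_{z'} = c_+(z')$ in the definition of $w_{(\cdot)}$, one has $w_{{\bf z}'} = (z_+)_- = z$. Hence $H + x w_{{\bf z}'} - w_{{\bf z}'} z_+ = H'$ has parameters $({\bf z}'; {\bf y})$, and $\dist_H(w_{{\bf z}'}, x) = \dist_H(z, x) \ge 2\rho n$, and similarly for $y$, so ${\bf z}' \in X_1(H)$. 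Part (c) is symmetric via Proposition \ref{rotation}(ii) with ${\bf z}' := (z_-, c_-(z_-))$; again $w_{{\bf z}'} = z$ by the other case of the definition.

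For (d), given ${\bf z} \in X_1(H)$, the $1$-path-cycle $H' := H + x w_{\bf z} - w_{\bf z} z$ furnished by the definition of $X_1(H)$ has parameters $\rho$-$({\bf z}; {\bf y})$ and $V(H') = V(H)$, so $H'$ is also of maximum order among properly coloured $1$-path-cycles whose cycles have length at least $\rho n$. Applying (a) to $H'$ with ${\bf z}$ playing the role of ${\bf x}$ yields $N_G({\bf z}) \subseteq V(H') = V(H)$.

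I do not foresee a real obstacle; the only care needed is the bookkeeping identity $w_{{\bf z}'} = z$ in parts (b) and (c), which follows directly from unpacking the case analysis in the definition of $w_{(\cdot)}$.
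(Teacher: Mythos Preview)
Your proof is correct and follows essentially the same approach as the paper's: part (a) by the straightforward extension contradiction, parts (b) and (c) via Proposition~\ref{rotation}, and part (d) by reapplying (a) to the rotated $1$-path-cycle $H + x w_{\bf z} - w_{\bf z} z$. You are somewhat more explicit than the paper in verifying the bookkeeping identity $w_{{\bf z}'} = z$ and the distance conditions required by the definition of $X_1(H)$, which the paper leaves implicit.
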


\begin{proof}
If $z \in N_G ( {\bf x} ) \setminus V(H)$, then $H+xz$ is a $1$-path-cycle with parameters $(z,c(xz) ; {\bf y})$ contradicting the maximality of $H$. 
Thus (a) holds, and (d) is proved similarly (by considering $H + x  w_{\bf z} - w_{\bf z}z$ instead of $H$).

If $c(x z ) \ne c_-(z)$, then $H + xz - z z_+$ is a $1$-path-cycle with parameters $( (z_+,c_+(z_+) ) ; {\bf y})$ by Proposition~\ref{rotation}(i).
So $(z_+,c_+(z_+)) \in X_1(H)$ implying~(b).
A similar argument shows that (c) holds. 
\end{proof}

We would also need to consider the set of $1$-path-cycles with parameters $({\bf z};{\bf y})$ that can be obtained from $H$ by replacing two edges of~$H$.
We now define $X_2$, which is the analogue of $X_1$ for replacing two edges of $H$ (with some additional constraints).
Let $X_2(H)$ be the set of pairs ${\bf z} = (z,c_{z})$ with vertex $z \in V(H)$ and colour $c_{z} \in C_H(z)$ such that there exist at least $10 \rho n $ pairs ${\bf z'} = (z',c_{z'}) \in X_1(H)$ satisfying 
\begin{itemize}
	\item $\dist _H(z,x), \dist _H(z,y), \dist _H(z',z) \ge 2 \rho n $ and 
	\item $H + x  w_{\bf z'} + z' w_{\bf z} - z w_{\bf z}  - z' w_{\bf z'}$ is a $1$-path-cycle with parameters $( {\bf z} ; {\bf y})$.
\end{itemize}

In the next lemma, we show that if $|X_1(H) \cup X_2(H)|$ is bounded above, then there exist disjoint $W^*,Z^* \subseteq V(G)$ such that $G[W^* \cup Z^*]$ is extremal with partition $W^*,Z^*$.
The proof relies on analysing the structure of $X_1(H)$, $X_2(H)$ and $N ( { \bf z})$ for ${\bf z} \in X_1(H)$.

\begin{lemma} \label{lma:X1X2}
Let $0 <1/n \ll  \rho \le \alpha /1000 < 1/1000$ and let $1/2 + 3 \alpha < \delta \le 2/3$.
Let $G$ be a critical edge-coloured graph on $n$ vertices with $\delta^c (G) \ge \delta n+1$.
Suppose that $H$ is a properly coloured $1$-path-cycle in $G$ of maximum order.
Suppose that $H$ has parameters $( {\bf x}; {\bf y})$, that $|X_1(H) \cup X_2(H)| \le (\delta + \alpha) n $ and that $|H| < n $.
Then there exist disjoint $W^*, Z^* \subseteq V(H)$ such that 
\begin{enumerate}[label={\rm(\roman*)}]
	\item $ |W^*| \ge (\delta - 7 \sqrt{\alpha} ) n $ and $|Z^*| \ge ( 2 \delta - 1  - 3 \alpha^{1/4} ) n$;
	\item for each $ w \in W^*$, there exists a distinct colour $c_w^*$ such that there are at least $|Z^*| - 3\sqrt{\alpha} n $ vertices $z \in Z^* \cap N_G(w)$ such that $c(zw) = c_w^*$;
	\item for each $z \in Z^*$, $d_G(z) \le ( \delta + 4 \alpha^{1/4} ) n $ and there are at least $( \delta - 6 \alpha^{1/4} ) n$ vertices $w \in W^* \cap N_G(z)$ and $c(zw) = c_w^*$.
\end{enumerate}
\end{lemma}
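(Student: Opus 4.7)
The plan is to exploit the tightness of the hypothesis $|X_1(H)\cup X_2(H)|\le(\delta+\alpha)n$, which is highly restrictive because single-edge switches applied at~$x$ generically produce roughly $2\delta n$ distinct parameters in~$X_1(H)$.

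First I would lower-bound $|X_1(H)|$ using the colour degree at~$x$. By Proposition~\ref{prop:X1}(a), $N_G({\bf x})\subseteq V(H)$ and $|N_G({\bf x})|\ge d^c_G(x)-1\ge\delta n$. Discarding the $O(\rho n)$ vertices of $N_G({\bf x})$ within distance $2\rho n$ of $x$ or~$y$ in~$H$, each remaining $z$ contributes $(z_+,c_+(z_+))\in X_1(H)$ when $c(xz)\ne c_-(z)$ and $(z_-,c_-(z_-))\in X_1(H)$ when $c(xz)\ne c_+(z)$, by Proposition~\ref{prop:X1}(b)(c). These two maps are injective, and their images are disjoint because $c_+(u)\ne c_-(u)$ at every interior vertex of the properly coloured~$H$. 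Setting $R_x:=\{z\in N_G({\bf x}):c(xz)\in\{c_-(z),c_+(z)\}\}$, this yields
\begin{equation*}
|X_1(H)|\;\ge\;2\delta n-|R_x|-O(\rho n),
\end{equation*}
so the hypothesis forces $|R_x|\ge(\delta-\alpha)n-O(\rho n)$.

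Second, I would show that $R_x$ is a set of star centres with pairwise distinct colours. For each $z\in R_x$ at good distance, the edges $xz$ and one of $zz_\pm$ share a colour $c^*_z:=c(xz)$. Since $G$ is critical, the corresponding monochromatic component is a star; as the two edges share only the vertex~$z$ (the distance condition precludes $x\in\{z_+,z_-\}$), the vertex $z$ must be its centre. If two distinct $z,z'\in R_x$ had $c^*_z=c^*_{z'}$, they would both be centres of the same star, forcing $z=z'$. Taking $W^*:=R_x$ (after a minor clean-up to absorb error terms) and $c^*_w:=c(xw)$ already delivers a candidate of the right size for~(i), with $x\in L_w$ for every $w\in W^*$, where $L_w$ denotes the leaf set of the $c^*_w$-star in~$G$.

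Third, to identify $Z^*$ and upgrade~(ii) and establish~(iii), I would iterate. For each ${\bf z}\in X_1(H)$, Proposition~\ref{rotation} gives a $1$-path-cycle $H_{\bf z}$ of the same (hence maximum) order with parameters $({\bf z};{\bf y})$. A single switch applied to $H_{\bf z}$ corresponds, after unwinding, to a double switch applied to $H$; matching against the definition of $X_2(H)$ shows that $X_1(H_{\bf z})\subseteq X_1(H)\cup X_2(H)\cup\{{\bf x}\}$ up to $O(\rho n)$ distance-exceptional entries, provided one averages across ${\bf z}\in X_1(H)$ to satisfy the $10\rho n$ multiplicity threshold in the $X_2$-definition via a Markov-type selection. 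Hence $|X_1(H_{\bf z})|\le(\delta+\alpha)n+O(\rho n)$ for typical such~${\bf z}$, and rerunning Steps~1--2 on $H_{\bf z}$ with~$z$ playing the role of~$x$ yields at least $(\delta-\alpha-O(\rho))n$ star centres $z'\in N_G({\bf z})$ with $c(zz')=c^*_{z'}$; equivalently $z\in L_{z'}$. A bipartite double count between these endpoint orbits and $W^*$ then forces all but a $\sqrt{\alpha}$-fraction of the $L_w$'s to share a common core $Z^*$ of size at least $(2\delta-1-3\alpha^{1/4})n$. The upper bound $d_G(z)\le(\delta+4\alpha^{1/4})n$ in~(iii) follows from the maximality of~$|H|$: any excess degree at $z\in Z^*$ would allow a leaf-style extension of the path component of~$H$, contradicting maximality.

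The main obstacle is the containment $X_1(H_{\bf z})\subseteq X_1(H)\cup X_2(H)\cup\{{\bf x}\}$: the definitions of $X_1$ and $X_2$ carry distance and multiplicity constraints that are not automatically transported by a single switch, so a careful Markov-type averaging is needed to discard a small set of bad~${\bf z}$. Thereafter, the bipartite double count producing $Z^*$ must be arranged so that the $O(\alpha n)$- and $O(\rho n)$-scale errors combine into the declared $7\sqrt{\alpha}$-, $3\sqrt{\alpha}$-, and $3\alpha^{1/4}$-style bounds; this two-level cleanup, first from $\alpha$ to $\sqrt{\alpha}$ for~$W^*$ and then from $\sqrt{\alpha}$ to $\alpha^{1/4}$ for~$Z^*$, accounts for the different exponents in the statement.
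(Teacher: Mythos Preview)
Your opening move coincides with the paper's: both arguments first show that the set $R_x$ of neighbours $z$ of $x$ with $c(xz)\in C_H(z)$ has size at least $(\delta-O(\alpha))n$, via the same injective-maps count. The paper calls this set $N'$ and derives it inside Claim~\ref{clm:Z+Z'}.

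From there you diverge. You set $W^*:=R_x$ and plan to recover $Z^*$ by iterating the single-switch argument at each new endpoint ${\bf z}\in X_1(H)$, then double-counting. The paper instead lets $Z$ be the set of new endpoints produced by the switches through $N'$, builds an auxiliary directed graph $F$ on $V(H)$ recording the $G$-edges out of~$Z$ that avoid the forbidden colour, and defines $W^*$ as those $w$ with near-full $F$-indegree from~$Z$ that are additionally almost monochromatic. The key observation (Claim~\ref{clm:F}(b)(c)) is that every $w$ with $F$-indegree $\ge 20\rho n$ forces at least one pair into $X_2(H)$, and at least two pairs if the incoming colours are mixed; the hypothesis $|X_1\cup X_2|\le(\delta+\alpha)n$ then squeezes $|W^*|$ from both sides. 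This is cleaner than iterate-and-double-count because it invokes the $X_2$-hypothesis once and statically, sidestepping your acknowledged obstacle about transporting the distance and $10\rho n$-multiplicity constraints across switches.

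There is a genuine gap in your handling of~(iii). The upper bound $d_G(z)\le(\delta+4\alpha^{1/4})n$ does \emph{not} follow from maximality via a ``leaf-style extension'': by Proposition~\ref{prop:X1}(d) all of $N_G({\bf z})$ already lies in $V(H)$, so high degree at $z$ produces no extension. In the paper the bound arises from an edge count in~$F$: for $z\in Z'$ (both pairs in $X_1$) one has $d^+_F(z)\ge d_G(z)-O(\rho n)$, so $e(F)\ge(\delta-6\rho)n|Z|+\sum_{z\in Z'}(d_G(z)-\delta n)$; comparing this against the upper bound on $e(F)$ coming from $|W'|$, together with $|X_2|\ge|W'|+|W'\setminus W^*|$, gives $\sum_{z\in Z'}(d_G(z)-\delta n)\le O(\sqrt{\alpha})n^2$, whence a Markov step. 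Your proposal contains no analogue of this count. Relatedly, you never explain where the threshold $2\delta-1$ for $|Z^*|$ comes from; in the paper it is the combination of $|Z|+|Z'|\ge(\delta-2\alpha)n$ with the fact that each $w\in W^*$ has at least $|Z|-3\sqrt{\alpha}n$ leaves, which together with $\delta^c_G(w)\ge\delta n$ forces $|Z|\le(1-\delta+3\sqrt{\alpha})n$ and hence $|Z'|\ge(2\delta-1-4\sqrt{\alpha})n$. Your ``bipartite double count'' does not visibly produce either ingredient.
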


\begin{proof}
Write $X_1$ for $X_1(H)$ and $X_2$ for $X_2(H)$.
Let $Z$ be the set of vertices $z \in V(H)$ such that $\dist_H( z , x), \dist_H( z , y) \ge 2  \rho n$ and  
\begin{itemize}
\item[($\ast$)] there exists a colour $c_z \in C_H(z)$ such that $\textbf{z} = (z,c_z) \in X_1$ with $c( z w_{ \bf{z} }) = c ( x w_{ \bf{z} } )$. 
\end{itemize}
Let $Z'$ be the set of vertices $z \in Z$ such that both colours $c_z \in C_H(z)$ satisfy~($\ast$).
Clearly $Z' \subseteq Z$.

We now bound the sizes of $Z$ and $Z'$ from below.
\begin{claim} \label{clm:Z+Z'}
$|Z| + |Z'| \ge (\delta - 2 \alpha)n \ge n/2$.
\end{claim}

\begin{proof}[Proof of claim]
Let
\begin{align*}
	N &:= \{ u \in N_G( {\bf x} ) \colon \dist_H( u , x), \dist_H( u, y) > 2  \rho n \}, &
	N' &:= \{ u \in N \colon c(x u) \in C_H(u)  \}.
\end{align*}
Thus $|N| \ge \delta^c(G) - 1 - 2 \cdot 2 \rho n \ge ( \delta  - 4 \rho) n$ and $N \subseteq V(H)$ by Proposition~\ref{prop:X1}(a).
By Proposition~\ref{prop:X1}(b) and~(c),
\begin{align*}	
	|X_1| & \ge |N'| + 2 |N \setminus N'| = |N| + |N \setminus N'| 
 \ge (\delta  - 4 \rho ) n +|N \setminus N'|.
\end{align*}
Since $|X_1 \cup X_2| \le  (\delta  + \alpha) n$, we have $|N \setminus N'| \le (4 \rho + \alpha ) n $ and so 
\begin{align}
|N'| \ge |N| - |N \setminus N'| \ge (\delta - \alpha - 8 \rho )n \ge (\delta - 2 \alpha ) n . \nonumber 
\end{align}
Let $X'_1$ be the subset of~$X_1$ generated by the edges~$xv$ with~$v \in N'$, that is, $X'_1 := \{ (x', c_{x'}) \in X_1 : w_{(x', c_{x'})} \in N' \}$.
So $|X'_1| \ge (\delta  - 2\alpha  ) n $.
Thus if $(z,c_z) \in X'_1$, then $w_{ \bf{z} } \in N'$ and $c( z w_{ \bf{z} }) = c ( x w_{ \bf{z} } )$.
Note that $Z$ contains all vertices $z \in V(H)$ such that $(z,c_z) \in X'_1$ for some colour~$c_z$.
Similarly, $Z'$ contains all vertices $z \in V(H)$ such that $( z, c_+(z) ), (z, c_-(z) ) \in X'_1$.
Hence, $|Z| + |Z'| \ge |X'_1| \ge ( \delta - 2 \alpha )  n \ge n/2$ as required.
\end{proof}

Define a directed graph~$F$ on $V(H)$ such that there exists a directed edge from $z$ to $w$ if and only if 
\begin{itemize}
	\item $(z,c_z) \in X_1$ and $z \in Z \cap N_H( w)$ and $c(wz) \ne c_z$;
	\item $\dist_H (w,x), \dist_H (w,y), \dist _H (w,z) \ge 2 \rho n $.	
\end{itemize}
We also colour the edges~$uv$ (in $F$) by~$c(uv)$.
We now establish some properties of $F$.
 
\begin{claim} \label{clm:F}
~
\begin{enumerate}[label={\rm(\alph*)}]
\item $e(F)  \ge e_F(Z,V(F)) \ge
 (\delta  - 6 \rho) n |Z| + \sum_{z \in Z'} (d_G(z) - \delta n ) $.
\item If $w \in V(H)$ has $10 \rho n$ edges~$zw$ in~$F$ with $c(zw) \ne c_+(w)$, then $( w_-, c_-(w_-) ) \in X_2$.
\item If $w \in V(H)$ has $10 \rho n$ edges~$zw$ in~$F$ with $c(zw) \ne c_-(w)$, then $( w_+, c_+(w_+) ) \in X_2$.
\end{enumerate}
\end{claim}

\begin{proof}[Proof of claim]
For ${\bf z} \in X_1$, $N_{G} ( {\bf z} ) \subseteq V(H)$ by Proposition~\ref{prop:X1}(d). 
Hence, for each $z \in Z$, $d^+_F(z) \ge |N_{G} ( {\bf z} )| - 3 \cdot 2 \rho n   \ge ( \delta - 6 \rho) n$.
A similar argument implies that, for each $z \in Z'$, $d^+_F(z')  \ge d_G(z')- 6 \rho n$.
Hence (a) holds. 

Suppose that $zw$ is an edge in~$F$ with $c(zw) \ne c_+(w)$.
Thus there is ${\bf z} = (z,c_z) \in X_1$ such that $c_z \ne c(zw)$.
Note that by the definition of~$X_1$, $H' = H + x  w_{\bf z} - w_{\bf z} z$ is a $1$-path-cycle with parameters $( {\bf z} ; {\bf y})$.
Since $\dist _H (w,x), \dist_H (w,y), \dist _H (w,z) \ge 2 \rho n $, we have $\dist _{H'} (w,z), \dist _{H'} (w,y) \ge \rho n +1$.
Proposition~\ref{rotation}(ii) implies that $H' + zw - ww_-$ is a $1$-path-cycle with parameters $( ( w_-, c_-(w_-) ) ; {\bf y})$.
This implies (b), and (c) is proven similarly.
\end{proof}

Let $W := \{ w \in V(F) \colon d^-_F(w) \ge 20 \rho n \}$ and $W' := \{ w \in V(F) \colon d^-_F(w) \ge (1- 2 \sqrt{\alpha})|Z|\}$.
Let $W^*$ be the set of $w \in W'$ such that there exists a colour $c_w^*$ and there are at most $10 \rho n $ vertices $z \in N_G(w)$ with $c(zw) \ne c_w^*$.

\begin{claim} \label{clm:W^*}
$| W^*| \ge (\delta - 7 \sqrt{\alpha} ) n$, $|W \setminus W^*| \le 5 \sqrt{\alpha} n $  and 
\begin{align}
\frac{1}{n}\sum_{z \in Z'} (d_G(z) - \delta n)  + |W' \setminus W^* | 
\le 4 \sqrt{\alpha} n. \label{eqn:eF1}
\end{align}
\end{claim}

\begin{proof}[Proof of claim]
If $|W \setminus W'| > \sqrt{\alpha } n$, then Claim~\ref{clm:F}(a) implies that 
\begin{align*}
	( \delta  - 6 \rho) n |Z| & \le e_F(Z,V(F))
	\le  e_F(Z, W ) + 20 \rho n^2 
		\le |Z| |W|  - 2 \sqrt{\alpha}|Z| |W \setminus W'| + 20 \rho n^2  \\
		 & \le |Z||W| - 2\alpha |Z| n  + 20 \rho n^2  
		 \le  ( |W| - 2\alpha  n  + 80 \rho  n ) |Z|,
\end{align*}
where the last inequality holds as $|Z| \ge n/4$ by Claim~\ref{clm:Z+Z'}.
This implies that $|W| > ( \delta  + \alpha )n$.
By Claim~\ref{clm:F}(b) and~(c), we have $|X_2| \ge |W|$, a contradiction.
Hence,
\begin{align*}
	|W \setminus W'| \le \sqrt{\alpha } n.
\end{align*}
Thus we have
\begin{align}
e_F(Z,V(F)) & \le  e_F(Z, W) + 20 \rho n^2 \le  ( |W'| + ( \sqrt{\alpha}  + 80 \rho ) n ) |Z| 
 \le (|W'| + 2 \sqrt{\alpha} n) |Z| . 
\nonumber
\end{align}
By Claim~\ref{clm:F}(a), we have 
\begin{align}
|W'|	& \ge  (\delta  -  2 \sqrt{\alpha}  -  6 \rho) n  + \frac{1}{|Z|}\sum_{z \in Z'} (d_G(z) - \delta n) 
\nonumber \\ &
 \ge   (\delta - 3 \sqrt{\alpha} ) n  + \frac{1}{n}\sum_{z \in Z'} (d_G(z) - \delta n). \label{eqn:W'}
\end{align}
Note that if $w \in W' \setminus W^*$, then $(w_-,c_-(w_-) ) , ( w_+, c_+(w_+) ) \in X_2$ by Claim~\ref{clm:F}(b) and~(c).
Thus $|X_2| \ge |W'| + |W' \setminus W^* |$.
Since $|X_2| \le (\delta + \alpha) n$, \eqref{eqn:W'} implies that
\begin{align*}
\frac{1}{n}\sum_{z \in Z'} (d_G(z) - \delta n)  + |W' \setminus W^* | \le 
(\alpha + 3 \sqrt{\alpha}  ) n 
\le 4 \sqrt{\alpha} n,
\end{align*}
so \eqref{eqn:eF1} holds.
Moreover, $|W' \setminus W^* | \le 4 \sqrt{\alpha} n$, so $|W \setminus W^*| \le 5 \sqrt{\alpha} n$.
Together with~\eqref{eqn:W'}, $| W^*| = |W'| - |W' \setminus W^* |  \ge (\delta - 7 \sqrt{\alpha} ) n$.
\end{proof}

Recall that for each $w \in W^* \subseteq W'$, $d^-_F(w) \ge (1- 2 \sqrt{\alpha})|Z|$.
So for each $w \in W^*$, the number of edges $zw$ of colour $c_w^*$ in~$G$ is at least 
\begin{align}
	|\{ z \in N_G(w) \colon c(zw) = c_w^*\} | & \ge 
	(1- 2 \sqrt{\alpha})|Z| - 10 \rho n 
	 \ge |Z| - 3 \sqrt{\alpha} n .
	\label{eqn:W^*1}
\end{align}
Since $\delta^c (G) \ge  \delta n$, the left hand side of the inequality is bounded above by $(1-\delta)n$.
Thus $|Z| \le (1- \delta + 3 \sqrt{\alpha} ) n$ and so Claim~\ref{clm:Z+Z'} implies that
\begin{align}
|Z'| & \ge (2 \delta - 1  - 4 \sqrt{\alpha}) n. \label{eqn:Z'}
\end{align}

Let $Z^*$ be the set of vertices $z \in Z$ satisfying~(iii).
We now bound the size of $Z^*$ from below.

\begin{claim} \label{clm:Z^*}
$|Z^*| \ge (2 \delta -1- 3 \alpha^{1/4})n$.
\end{claim}

\begin{proof}[Proof of claim]
Let $Z_1$ be the set of $z \in Z'$ such that $d_G(z) \ge ( \delta + 4 \alpha^{1/4} ) n$.
So \eqref{eqn:eF1} implies that
\begin{align*}
|Z_1| \le \alpha^{1/4} n.
\end{align*}
Let $Z_2$ be the set of $z \in Z$ such that $d_G(z,V(F) \setminus W) \ge 20 \sqrt{ \rho} n$.
Note that
\begin{align*}
	|Z_2| \le e_F(Z, V(F) \setminus W) / 20 \sqrt{ \rho} n \le \sqrt{ \rho } n .
\end{align*}
Let $Z_3$ be the set of $z \in Z$ such that there exist at least $4 \alpha^{1/4} n $ vertices $w \in W^*$ with $c(zw) \ne c_w^*$.
By~\eqref{eqn:W^*1}, each $w \in W^*$ is incident with at most $3 \sqrt{\alpha} n $ edges $zw$ with $z \in Z$ and $c(zw) \ne c_w^*$.
Hence
\begin{align*}
	|Z_3| \le 3 \sqrt{\alpha} n ^2 / (4 \alpha^{1/4}n) < \alpha^{1/4} n. 
\end{align*}
For each $z \in Z \setminus (Z_2 \cup Z_3)$, the number of edges~$zw$ (in both $G$ and $F$) such that $w \in W^*$ and $c(zw) = c_w^*$ is at least
\begin{align*}
	d_G(z,W^*)  - 4 \alpha^{1/4} n \ge d_G(z) - 20 \sqrt{ \rho} n - |W \setminus W^*| - 4 \alpha^{1/4} n
	\ge ( \delta - 6 \alpha^{1/4} ) n,
\end{align*}
where the last inequality is due to Claim~\ref{clm:W^*}.
Hence $Z^*  \supseteq Z' \setminus (Z_1 \cup Z_2 \cup Z_3)$.
Together with~\eqref{eqn:Z'}, we have $|Z^*| \ge (2 \delta - 1 - 3 \alpha^{1/4} ) n$.
\end{proof}

Note that properties (i) and (ii) holds by Claims~\ref{clm:W^*} and~\ref{clm:Z^*} and~\eqref{eqn:W^*1}, and (iii) holds by our construction.
To complete the proof, it suffices to show that $W^*$ and $Z^*$ are disjoint.
For each $w \in W^*$, (ii) and (i) imply that 
\begin{align*}
	d_G(w) \ge d_G^c(w)-1  + |Z^*| -3 \sqrt{\alpha} n \ge (3\delta - 1 - 3 \alpha^{1/4}-3 \sqrt{\alpha}) n > ( \delta + 4 \alpha^{1/4} ) n,
\end{align*}
so $w \notin Z^*$ as required.
\end{proof}

Let $G$ be an edge-coloured graph and let $H$ be $1$-path-cycle with parameters $({\bf x} ; { \bf y} )$ with path component~$P$.
Let $H'$ be the $1$-path-cycle with parameters $({\bf y} ; { \bf x} )$ obtained from $H$ by reversing the orientations of all edges. 
Let $Y_1(H) : = X_1(H')$ and $Y_2(H) : = X_2(H')$.
We study the edges between $X_1(H) \cup X_2(H)$ and $Y_1(H) \cup Y_2(H)$ in the following lemma.

\begin{lemma} \label{lma:H}
Let $G$ be a critical edge-coloured graph on $n$ vertices and let $\rho >0$.
Suppose that $H$ is a properly coloured $1$-path-cycle in $G$ of maximum order. 
Suppose that  $H$ has parameters $({\bf x} ; { \bf y} )$ and that $|H| < n$.
Then for all $(x',c_{x'}) \in X_1(H) \cup X_2(H)$ and all $(y',c_{y'}) \in Y_1(H) \cup Y_2(H)$ such that $\dist_H(x,y) \ge 2 \rho n$, either $xy \notin E(G)$, $c(xy) = c_x$ or $c(xy) = c_y$.
\end{lemma}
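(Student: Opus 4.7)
The plan is to derive a contradiction from the maximality of $|H|$. I read the conclusion as asserting, for $(x', c_{x'}) \in X_1(H) \cup X_2(H)$ and $(y', c_{y'}) \in Y_1(H) \cup Y_2(H)$ with $\dist_H(x', y') \ge 2\rho n$, that either $x'y' \notin E(G)$ or $c(x'y') \in \{c_{x'}, c_{y'}\}$ (the symbols $x$, $y$, $c_x$, $c_y$, $xy$ in the statement appear to be typos for $x'$, $y'$, $c_{x'}$, $c_{y'}$, $x'y'$). Assume for contradiction that $x'y' \in E(G)$ with $c(x'y') \notin \{c_{x'}, c_{y'}\}$. I would first produce a single $1$-path-cycle $H^*$ on $V(H)$ with parameters $\rho$-$({\bf x'}; {\bf y'})$ by performing the edge exchanges witnessing $(x', c_{x'}) \in X_1(H) \cup X_2(H)$ simultaneously with those witnessing $(y', c_{y'}) \in Y_1(H) \cup Y_2(H)$. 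Then the forbidden edge $x'y'$ will close the path component of $H^*$ into a long cycle, after which any spare vertex outside $V(H)$ will yield a strictly larger $1$-path-cycle.

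To combine the switches, recall that $(x', c_{x'}) \in X_1(H) \cup X_2(H)$ yields $H_1$ from $H$ by at most two edge exchanges involving vertices within $H$-distance $1$ of $x'$ or of an auxiliary vertex $z'$; every such vertex is at $H$-distance at least $2\rho n$ from both $x$ and $y$. In the $X_2$ case there are at least $10\rho n$ valid choices for $z'$, so I may further insist that $\dist_H(z', y') \ge 2\rho n$, ruling out at most roughly $4\rho n$ of those choices. Analogously, $(y', c_{y'}) \in Y_1(H) \cup Y_2(H)$ localises the $Y$-side exchanges near $y'$ and a possibly chosen $\bar{z}'$, which I also pick with $\dist_H(\bar{z}', x') \ge 2\rho n$. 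Since $\dist_H(x', y') \ge 2\rho n$, every vertex touched on the $X$-side lies at $H$-distance at least $2\rho n - 1$ from every vertex touched on the $Y$-side, so the two sets of exchanged edges are vertex-disjoint. Consequently the two families of exchanges commute and together produce a $1$-path-cycle $H^*$ with $V(H^*) = V(H)$ and parameters $\rho$-$({\bf x'}; {\bf y'})$; in particular every cycle of $H^*$ has length at least $\rho n$.

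Because $c(x'y')$ differs from both $c_{x'} = C_{H^*}(x')$ and $c_{y'} = C_{H^*}(y')$, the graph $H^* + x'y'$ is still properly coloured. The path component of $H^*$, of length at least $\rho n$, becomes a cycle of length at least $\rho n + 1$, and every other cycle of $H^*$ is preserved. Thus $H^* + x'y'$ is a properly coloured vertex-disjoint union of cycles on $V(H)$, each of length at least $\rho n$. Picking any $w \in V(G) \setminus V(H)$ (which exists since $|H| < n$) and treating $w$ as a path on one vertex, the graph $(H^* + x'y') \cup \{w\}$ is a properly coloured $1$-path-cycle on $|H| + 1$ vertices whose every cycle has length at least $\rho n$. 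This contradicts the maximality of $|H|$ among such $1$-path-cycles.

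The main obstacle is the simultaneous execution of the two sets of switches. One has to track carefully that in the $X_2$ (respectively $Y_2$) case the auxiliary vertex $z'$ (respectively $\bar{z}'$) can be chosen far not only from $x, y$ but also from $y'$ (respectively $x'$); the abundance of valid choices built into the $X_2$/$Y_2$ definitions, combined with $\dist_H(x', y') \ge 2\rho n$, is exactly what is needed to decouple the two sides. Once this commutativity is secured and one verifies that no cycle created by the combined switch is shorter than $\rho n$, the closure of the path by $x'y'$ and the insertion of $w$ as an isolated vertex are entirely routine.
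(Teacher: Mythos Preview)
Your approach is correct and matches the paper's: both reduce to building a $1$-path-cycle $H^*$ on $V(H)$ with parameters $(\mathbf{x'};\mathbf{y'})$, then close the path with the edge $x'y'$ and adjoin a spare vertex $w\notin V(H)$ to contradict maximality. The paper carries out your ``commutativity'' step by first performing the $X$-side exchanges to obtain $H'$ with parameters $(\mathbf{x'};\mathbf{y})$ and then applying Proposition~\ref{rotation} twice to execute the $Y$-side exchanges on~$H'$ (verifying the needed distances in the intermediate graphs, not just in~$H$), and it also demands that all six of $x,y,x',y',z,v$ be pairwise at $H$-distance at least $\rho n+10$ --- so in your argument you should additionally force $\dist_H(z',\bar z')$ to be large when selecting~$\bar z'$.
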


\begin{proof}
Consider any ${\bf x'} =  (x',c_{x'}) \in X_1(H) \cup X_2(H)$ and any $ {\bf y'} =  (y',c_{y'}) \in Y_1(H) \cup Y_2(H)$ such that $\dist_H(x,y) \ge 2 \rho n$.
To prove the lemma, it is sufficient to show that there exists a $1$-path-cycle~$H_0$ with $V(H_0) = V(H)$ and parameters $( {\bf x'} ;  {\bf y'})$.
To see this suppose that $x'y' \in E(G)$ and $c_{x'}  \ne c(xy) \ne  c_{y'}$, then $H_0 + x'y'$ is a vertex-disjoint union of cycles each of length at least $\rho n$. 
For $z \notin V(H)$, $ ( H_0 + x'y' ) \cup z$ is a $1$-path-cycle contradicting the maximality of~$|H|$.

%
%
%

We will only consider the case when ${\bf x'} \in X_2(H)$ and ${\bf y'}  \in Y_2(H)$, since the other cases proved by similar (and simpler) arguments.
Choose ${\bf z} = (z,c_{z}) \in X_1(H)$ and ${\bf v} = ( v , c_{v}) \in Y_1(H)$
such that 
\begin{itemize}
	\item any pair of $\{x,y, x',y',z,v\}$ are distance at least $  \rho n+10$ apart in~$H$;
	\item $H' := H + x  w_{\bf z} + z w_{\bf x'} - z w_{\bf z}  - x' w_{ \bf x' }$ is a $1$-path-cycle with parameters $( { \bf x' } ; { \bf y})$.
	\item $H + y  w_{\bf v} + v w_{\bf y'} - v w_{\bf v}  - y' w_{\bf y'}$ is a $1$-path-cycle with parameters $( {\bf x} ; { \bf y'} )$.
\end{itemize}
Note that ${\bf z}$ and ${\bf v}$ exist since ${\bf x'} \in X_2(H)$ and ${\bf y'}  \in Y_2(H)$.
Since $\dist _H (v,x), \dist_H (v,y), \dist _H (v,z) \ge   \rho n +10$, we have $\dist _{H'} (v,x'), \dist _{H'} (v,y) \ge \rho n +1$.
Proposition~\ref{rotation} implies that $H'':= H' + y  w_{\bf v} - v w_{\bf v}$ is a $1$-path-cycle with parameters $( {\bf x'} ; {\bf v})$.
By a similar argument, we deduce that $H'' + v  w_{\bf y'} - y' w_{\bf y'}$ is a $1$-path-cycle with parameters $( {\bf x'} ; {\bf y'})$ as required. 
\end{proof}

The next lemma plays a key role in the proof of  Lemma~\ref{lma:1-path-cycle}.

\begin{lemma} \label{lma:AB}
Let $\eps , \rho, \alpha $ be such that $1/n \ll \alpha, \eps   \ll 1 $.
Let $G$ be an edge-coloured graph on $n$ vertices with $\delta^c(G) \ge \delta n+1$.
Then one of following statements holds
\begin{enumerate}[label={\rm(\alph*)}]
	\item $G$ contains a properly coloured $1$-path-cycle such that $|H| \ge \min \{n, ( 3 \delta + \alpha/2)n/2\}$ and every cycle in $H$ has length at least $\alpha n /100$;
	\item there exist disjoint $W^*, Z^* \subseteq V(G)$ such that 
\begin{enumerate}[label={\rm(\roman*)}]
	\item $ |W^*| \ge (\delta  - 7 \sqrt{\alpha} ) n $ and $|Z^*| \ge ( 2 \delta - 1 - 3 \alpha^{1/4} ) n$;
	\item for each $ w \in W^*$, there exists a distinct colour $c_w^*$ such that there are at least $|Z^*| - 3\sqrt{\alpha} n $ vertices $z \in Z^*$ such that $c(zw) = c_w^*$;
	\item for each $z \in Z^*$, $d_G(z) \le ( \delta + 4 \alpha^{1/4} ) n $ and there are at least $( \delta  - 6 \alpha^{1/4} ) n$ edges $zw$ such that $w \in W^*$ and $c(zw) = c_w^*$.
\end{enumerate}
\end{enumerate}
\end{lemma}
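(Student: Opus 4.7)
The strategy is to take $H$ to be a properly coloured $1$-path-cycle in~$G$ of maximum order, subject to the constraint that every cycle component has length at least $\rho n$, where we set $\rho := \alpha/100$. If $|H| \ge \min\{n, (3\delta + \alpha/2)n/2\}$, then conclusion~(a) holds with this~$H$. Otherwise $|H| < n$, and Lemma~\ref{lma:parameter} (applied with parameter~$\rho$) ensures that we may assume $H$ has parameters $\rho$-$({\bf x}; {\bf y})$.

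In this latter case, the aim is to derive conclusion~(b) via Lemma~\ref{lma:X1X2}, whose essential hypothesis is $|X_1(H) \cup X_2(H)| \le (\delta + \alpha) n$. The side-constraint $\delta \le 2/3$ in Lemma~\ref{lma:X1X2} can be handled separately: when $\delta > 2/3$, the target $\min\{n, (3\delta + \alpha/2)n/2\}$ equals~$n$, and the counting argument below (adapted to this boundary case) forces $|H| = n$, falling back into~(a). The main task is thus to establish the upper bound on $|X_1(H) \cup X_2(H)|$.

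I would prove this bound by contradiction: suppose $|X_1(H) \cup X_2(H)| > (\delta + \alpha) n$. Reversing the orientation of~$H$ yields a $1$-path-cycle with parameters $\rho$-$({\bf y}; {\bf x})$ satisfying the same maximality property, so repeating the analysis also gives $|Y_1(H) \cup Y_2(H)| > (\delta + \alpha) n$. Let
\[
A := \{x' \in V(H) : (x', c) \in X_1(H) \cup X_2(H) \text{ for some } c\},
\]
and define $B$ analogously from $Y_1(H) \cup Y_2(H)$. Since each vertex contributes at most two such pairs (one per colour in $C_H(\cdot)$), $|A|, |B| \ge (\delta + \alpha)n/2$. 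Moreover, Proposition~\ref{prop:X1}(d) together with the definition of $X_2$ implies $N_G({\bf x'}) \subseteq V(H)$ for every ${\bf x'} \in X_1 \cup X_2$, with the symmetric statement on the $y$-side. Lemma~\ref{lma:H} then supplies the crucial colour constraint: for ${\bf x'} \in X_1 \cup X_2$ and ${\bf y'} \in Y_1 \cup Y_2$ with $\dist_H(x', y') \ge 2 \rho n$, either $x'y' \notin E(G)$ or $c(x'y') \in \{c_{x'}, c_{y'}\}$.

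Combining these ingredients should yield a contradiction via a double counting. A useful observation is that if $y' \in B$ admits two distinct pairs $(y', c_{y'}^{(1)}), (y', c_{y'}^{(2)}) \in Y_1 \cup Y_2$, then for any ${\bf x'} \in X_1 \cup X_2$ at distance at least $2\rho n$ from $y'$, the two resulting constraints $c(x'y') \in \{c_{x'}, c_{y'}^{(j)}\}$ force $c(x'y') = c_{x'}$, sharply restricting the colour palette at $x'$ towards such~$y'$. Using the colour degree bound $d_G^c(x') \ge \delta n + 1$ together with $N_G({\bf x'}) \subseteq V(H)$, I expect to show that at least $(\delta - O(\rho))n$ distinct colours at each $x' \in A$ must be witnessed on edges into $V(H) \setminus B$. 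Combined with $|B| \ge (\delta + \alpha)n/2$, this would force $|V(H)| \ge (3\delta + \alpha/2)n/2$, contradicting $|H| < (3\delta + \alpha/2)n/2$. The main obstacle is executing this counting cleanly: properly handling vertices in $A \cap B$, vertices contributing only one admissible pair versus two, and the $2\rho n$ distance exceptions, so that the final bound attains the full factor~$3/2$. Once the bound on $|X_1(H) \cup X_2(H)|$ is secured, Lemma~\ref{lma:X1X2} delivers conclusion~(b).
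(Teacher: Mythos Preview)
Your overall architecture matches the paper's: set $\rho$ proportional to $\alpha$, take a maximum $1$-path-cycle $H$ with cycles of length $\ge \rho n$, reduce via Lemma~\ref{lma:parameter} to a parametrised $H$, and then argue that either $|X_1(H) \cup X_2(H)| \le (\delta + \alpha)n$ (whence Lemma~\ref{lma:X1X2} yields (b)) or, together with the symmetric lower bound on $|Y_1(H)\cup Y_2(H)|$, force $|H|$ to be large. One minor point: you need $\rho \le \alpha/1000$, not $\alpha/100$, to invoke Lemma~\ref{lma:X1X2}.

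The substantive gap is your counting. Your observation about vertices $y'$ carrying \emph{two} pairs in $Y$ is correct and corresponds to the paper's set $S_Y$: for such $y'$ (at distance $\ge 2\rho n$ from $x'$) any edge $x'y'$ must satisfy $c(x'y') = c_{x'}$, so $N_G({\bf x'})$ misses $S_Y$ up to $O(\rho n)$ exceptions. But your claim that ``at least $(\delta - O(\rho))n$ distinct colours at each $x' \in A$ must be witnessed on edges into $V(H) \setminus B$'' fails for the \emph{single-pair} vertices of $B$. If $y'$ contributes only one pair ${\bf y'} = (y', c_{y'}) \in Y$, Lemma~\ref{lma:H} still permits $c(x'y') = c_{y'}$, and these colours $c_{y'}$ vary with $y'$; hence $N_G({\bf x'})$ may meet this part of $B$ heavily, through edges of many distinct colours. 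You cannot push the $\delta n$ colour-neighbours of ${\bf x'}$ out of $B$, and the inequality $|H| \ge \delta n + |B|$ you are aiming for is not available.

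The paper handles this by splitting off the double-pair vertices $S_X, S_Y$ from the single-pair remainders $R_X, R_Y$ (sets of pairs), using $2|S_X| + |R_X| = |X|$. The $S$-part is handled as you describe; if $R_X = \emptyset$ one is already done. For $R_X, R_Y$ nonempty, the paper builds an auxiliary bipartite digraph $F$ on $R_X \cup R_Y$, with an arc ${\bf v} \to {\bf w}$ whenever $vw \in E(G)$, $c(vw) \ne c_v$ and $\dist_H(v,w) \ge 2\rho n$. The point of Lemma~\ref{lma:H} here is that $F$ is \emph{oriented} (no $2$-cycles): arcs in both directions would mean $c(x'y') \notin \{c_{x'}, c_{y'}\}$. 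One then bounds $d^+_F({\bf y'}) \ge \tfrac{(3\delta + \alpha - 16\rho)n + |R_X|}{2} - |H|$ (and symmetrically on $R_X$), sums, and compares with $e(F) \le |R_X||R_Y|$ to obtain $|H| \ge (3\delta + \alpha - 16\rho)n/2$. This oriented-graph device is precisely the missing idea needed to extract the full factor $3/2$ from the single-pair vertices; without it your sketch cannot close.
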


Here we give a brief description of the proof. 
By Lemma~\ref{lma:X1X2}, we may assume that $| X_1(H) \cup X_2(H) |$ is bounded below (or else (b) holds). 
Similarly $| Y_1(H) \cup Y_2(H) |$ is also bounded below.
Using Lemma~\ref{lma:H}, we then show that $|H| \ge (3 \delta + \alpha /2 ) n /2$ as desired.

\begin{proof}[Proof of Lemma~\ref{lma:AB}]
Let $\rho:= \alpha/1000$.
Let $H$ be a properly coloured $1$-path-cycle in $G$ such that every cycle in $H$ has length at least $\rho n $.
Suppose that $|H|$ is maximum.
We may assume that $|H| <  \min\{n, (3\delta + \alpha/2) n/2 \}$ or else we are done. 
By Lemma~\ref{lma:parameter}, we further assume that $H$ is a $1$-path-cycle with parameters $\rho$-$({\bf x}; {\bf y})$.

Let $X : = X_1(H) \cup X_2(H)$ and let $Y :=  Y_1(H) \cup Y_2(H)$.
By Lemma~\ref{lma:X1X2}, we may assume that $|X| \ge (\delta + \alpha) n$.
Similarly, by reversing all orientation of~$H$ and Lemma~\ref{lma:X1X2}, we may also assume that $|Y| \ge (\delta  + \alpha) n$.
Let $S_X$ be the set of vertices $v \in V(H)$ such that $(v,c_+(v)), (v,c_-(v)) \in X$.
Let $R_X : = \{ (x',c_{x'}) \in X : x' \notin S_X  \}$.
Note that 
\begin{align}
	2 |S_X| +|R_X|  = |X|  \ge   (\delta + \alpha) n .  \label{eqn:SRX}
\end{align}
Consider any ${\bf y'} = (y',c_{y'}) \in Y$.
Proposition~\ref{prop:X1} and Lemma~\ref{lma:H} imply that 
	\begin{align}
| N_G({\bf y}) | &\ge \delta n,  & 
N_G({\bf y}) & \subseteq V(H),&
|N_G ({\bf y}) \cap S_X| & \le 4 \rho n. \label{eqn:NGX}
\end{align}
If $R_X = \emptyset$, then 
\begin{align*}
|H| & \ge |N_G({\bf y'})| + |S_X| - 4 \rho n 
\ge \delta n + (\delta + \alpha) n/2  - 4 \rho n  
\ge (3\delta + \alpha/2) n/2,
\end{align*}
a contradiction.
Thus $R_X \ne \emptyset$.
Similarly, let $S_Y$ be the set of vertices $v \in V(H)$ such that $(v,c_+(v)) , (v,c_-(v)) \in Y$ and $R_Y : = \{ (y',c_{y'}) \in Y : y' \notin S_y  \}$.

Define $F$  to be the auxiliary directed bipartite graph on vertex classes $R_X$ and $R_Y$ such that there exists a directed edge from ${\bf v} = (v, c_v)$ to ${\bf w} =(w,c_w)$ if and only if 
\begin{itemize}
	\item $ \dist _G ( v , w ) \ge 2 \rho n $;
	\item $vw$ is an edge in $G$ with $c(vw) \ne c_v$.
\end{itemize}
By Lemma~\ref{lma:H}, $F$ is an oriented graph, that is, $F$ has no directed $2$-cycle. 
Consider any ${\bf y'} = (y',c_{y'}) \in Y$.
We have
\begin{align}
	d^+_F( {\bf y' } ) & \ge  |N_G({\bf y'}) \cap R_X| - 4 \rho n 
	 \ge  |N_G({\bf y'}) \cap ( R_X \cup S_X ) | - 4 \rho n - |N_G ({\bf y'}) \cap S_X| \nonumber\\
	& \overset{\mathclap{\eqref{eqn:NGX}}}{\ge} \delta n  + | R_X| + | S_X | - |H| - 8 \rho n  
	 \overset{\mathclap{\eqref{eqn:SRX}}}{\ge}  \frac{(3\delta  + \alpha - 16 \rho ) n  + |R_X|}{2} - |H| . \nonumber
\end{align}
Similarly, for any ${\bf x'} \in R_X$, $d^+_F( {\bf x' } ) \ge \frac{(3\delta  + \alpha - 16 \rho ) n  + |R_Y|}{2} - |H|$.
Since $F$ is an oriented graph, we have
\begin{align}
	|R_X| |R_Y| & \ge e(F) \ge  \sum_{{\bf x} \in R_X} d^+_F( {\bf x} ) + \sum_{{\bf y} \in R_Y} d^+_F( {\bf y} ) \nonumber \\
		& \ge |R_X| \left( \frac{(3\delta  + \alpha - 16 \rho ) n  + |R_Y|}{2} - |H| \right) + |R_Y| \left( \frac{(3\delta  + \alpha - 16 \rho ) n  + |R_X|}{2} - |H| \right) , \nonumber \\
		0 & \ge (|R_X| + |R_Y|) ( (3\delta  + \alpha - 16 \rho ) n/2 - |H| ) . \nonumber
\end{align}
This implies that $|H| \ge (3\delta  + \alpha - 16 \rho ) n/2 \ge (3\delta  + \alpha/2 ) n/2$ as $R_X \cup R_Y \ne \emptyset$, a contradiction. 
\end{proof}

When $\delta \ge 2/3$, Lemma~\ref{lma:AB} implies Lemma~\ref{lma:1-path-cycle}.
For $1/2 < \delta <2/3$, we present a rough sketch proof of Lemma~\ref{lma:1-path-cycle} using Lemma~\ref{lma:AB}.
Suppose that Lemma~\ref{lma:1-path-cycle} holds for any $\delta'$ with $\delta' > \delta$.
Apply Lemma~\ref{lma:AB} and we may assume that Lemma~\ref{lma:AB}(b) holds (or else we are done).
Thus there exist disjoint $Z^*, W^* \subseteq V(G)$ satisfying Lemma~\ref{lma:AB}(b).
Let $\delta^* : = ( \delta - 4 \alpha^{1/8} ) n / |G \setminus Z^*|$.
So $\delta^* > \delta$.
If $d^c(v, Z^*) \le  4 \alpha^{1/8} n$ for all vertices $v \notin Z^*$, then $\delta^c ( G \setminus Z^*) \ge ( \delta - 4 \alpha^{1/8} ) n  =  \delta^* |G \setminus Z^*|$.
Since $\delta^* > \delta$, we apply Lemma~\ref{lma:1-path-cycle} to $G \setminus Z^*$.
We have either a large enough properly coloured $1$-path-cycle or $G \setminus Z^*$ is $(\delta^*, \eps^*)$-extremal for some small $\eps^*$ or both.
In the second case, we then show that $G$ is $(\delta, \eps)$-extremal. 
This argument is formalised in the lemma below. 

We would need the following notation. 
For $\phi \ge 0 $, let $I_0(\phi) : = [ 2/3- \phi , 1)$.
For $s \in \mathbb{N}$, let $I_s(\phi) : = \{ p \in [0,1) \setminus \bigcup_{0 \le i < s} I_{i} (\phi) :  \frac{ p - \phi }{ 3/2 - p  } \in  I_{s-1}(\phi) \}$.
Let $s_{\phi}(\delta)$ be the integer $s$ such that $\delta \in I_s(\phi)$.

\begin{lemma} \label{lma:1-path-cycle2}
Let $0 < 1/n \ll \alpha_{ s_{ \phi } (\delta) } \ll \alpha_{ s_{ \phi } (\delta) -1} \ll \dots \ll   \alpha_0 \ll  \phi \ll\eps \ll 1/2 \ll \delta \le \delta^* < 1$.
Suppose that $4^{s_{\phi}(\delta)} \eps \ll \delta - 1/2$, and that
$G$ is a critical edge-coloured graph on~$n^* \ge 2^{s_{ \phi } (\delta^*)} n$ vertices with $\delta^c (G)  \ge \delta^* n ^* + 1$.
Then one of the following statements holds:
\begin{enumerate}[label={\rm(\roman*$^*$)}]
\item $G$ contains a properly coloured $1$-path-cycle $H$ such that $|H| \ge ( 3 \delta^* + \alpha_{ s_{ \phi } (\delta^*) } /2 ) n^* /2 $ and every cycle in $H$ has length at least $\alpha_{s_{\phi}(\delta^*)} n^* /100$;
\item $G$ is $(\delta^*, 4^{s_{\phi}(\delta^*)} \eps )$-extremal.
\end{enumerate}
%
%
%
%
\end{lemma}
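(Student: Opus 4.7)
I will prove Lemma~\ref{lma:1-path-cycle2} by induction on $s := s_\phi(\delta^*)$, with $\delta, n, \phi, \eps$ regarded as fixed.

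\emph{Base case $s=0$.} Then $\delta^* \in I_0(\phi) = [2/3-\phi, 1)$. Apply Lemma~\ref{lma:AB} with $\alpha = \alpha_0$. Conclusion~(a) immediately gives~(i$^*$). If~(b) holds, we obtain disjoint $W^*, Z^* \subseteq V(G)$ with $|W^*| \geq (\delta^* - 7\sqrt{\alpha_0})n^*$ and $|Z^*| \geq (2\delta^*-1-3\alpha_0^{1/4})n^*$. Since $\delta^* \geq 2/3 - \phi$ and $\phi \ll \eps$, the second bound gives $|Z^*| \geq (1-\delta^*-\eps)n^*$, and combined with properties (ii), (iii) of Lemma~\ref{lma:AB}(b) the pair $(W^*,Z^*)$ directly witnesses $(\delta^*,\eps)$-extremality of $G$, giving~(ii$^*$) (note $4^0\eps = \eps$).

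\emph{Inductive step $s \geq 1$.} Apply Lemma~\ref{lma:AB} with $\alpha = \alpha_s$; case~(a) gives~(i$^*$), so assume (b) yields $W^*, Z^*$. Define
\begin{equation*}
U := \{v \in V(G)\setminus Z^* : d^c_G(v,Z^*) > 4\alpha_s^{1/8} n^*\}.
\end{equation*}
First show $W^* \cap U = \emptyset$: by property~(ii) each $w \in W^*$ has at most $3\sqrt{\alpha_s}n^* + 1$ distinct colours on edges into~$Z^*$. Then, double-counting the ``exceptional'' edges at $Z^*$-endpoints (those not of colour $c_w^*$; at most $10\alpha_s^{1/4} n^*$ per $z$ by property~(iii)) against the $> 4\alpha_s^{1/8}n^*$ such edges contributed by each $v \in U$, deduce $|U| \leq 3\alpha_s^{1/8}n^*$. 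Let $Z^{**} := Z^* \cup U$ and $G' := G \setminus Z^{**}$, passed to a critical subgraph with the same colour degrees, and write $n' := |G'|$. For every $v \in V(G')$ we have $d^c_{G'}(v) \geq d^c_G(v) - d^c_G(v,Z^{**}) \geq (\delta^* - 7\alpha_s^{1/8})n^*$.

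Set $\delta' := ((\delta^* - 7\alpha_s^{1/8})n^* - 1)/n'$. Using $|Z^{**}| \geq |Z^*| \geq (2\delta^*-1-3\alpha_s^{1/4})n^*$, the resulting bound $n' \leq (2-2\delta^* + 3\alpha_s^{1/4})n^*$, the assumption $\delta^* > 1/2$ (so that $2-2\delta^* < 3/2-\delta^*$) and the hierarchy $\alpha_s^{1/8} \ll \phi$, one obtains
\begin{equation*}
\delta' \;\geq\; \frac{\delta^* - 8\alpha_s^{1/8}}{2-2\delta^* + 3\alpha_s^{1/4}} \;\geq\; \frac{\delta^* - \phi}{3/2 - \delta^*} \;\in\; I_{s-1}(\phi),
\end{equation*}
and hence $s_\phi(\delta') \leq s-1$. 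Moreover $n' \geq (\delta^*-O(\alpha_s^{1/8}))n^* \geq n^*/2 \geq 2^{s_\phi(\delta')} n$, so the inductive hypothesis applies to $G'$. If it returns a $1$-path-cycle $H$ satisfying (i$^*$) for $G'$, then using $\delta' n' \geq (\delta^* - O(\alpha_s^{1/8}))n^*$ and $\alpha_{s-1} \gg \alpha_s^{1/8}$, we get $|H| \geq (3\delta^* + \alpha_s/2)n^*/2$ and cycle lengths $\geq \alpha_s n^*/100$, which is (i$^*$) for $G$. Otherwise $G'$ is $(\delta', 4^{s-1}\eps)$-extremal with witness $(A'',B'')$, and we lift to $G$ by taking $A := A''$ and $B := B'' \cup Z^{**}$; the size conditions (A1) for $(\delta^*, 4^s\eps)$-extremality of $G$ follow from $n' + |Z^{**}| = n^*$ and the slack $4^s\eps - 4^{s-1}\eps = 3\cdot 4^{s-1}\eps$ absorbing $|Z^{**}|$-type errors. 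For (A2)--(A3), the colour labels $c_a$ attached to $a \in A''$ by the extremal structure of $G'$ must be identified with the colours $c_w^*$ attached to $w \in W^*$ by Lemma~\ref{lma:AB}(b) on~$G$; once this is done, property~(iii) of Lemma~\ref{lma:AB}(b) gives (A3) on the $Z^{**}$-part of $B$ directly.

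\emph{The main obstacle} is exactly this last alignment: $W^* \subseteq V(G')$ has essentially no useful colour structure inside $G'$ (its special colours $c_w^*$ were primarily realised on edges to the removed set $Z^*$), so one must argue that the extremal pair $(A'', B'')$ produced by the inductive hypothesis is nevertheless essentially forced, up to an $O(\alpha_s^{1/8})n^*$ error, to place $W^*$ into $A''$ with $c_a = c_w^*$ for $a = w$. Once this alignment is secured, all remaining accounting is routine tracking of the hierarchies $\alpha_s \ll \alpha_{s-1} \ll \phi \ll \eps$ and of the multiplicative factor of~$4$ in the extremality parameter.
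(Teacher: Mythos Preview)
Your overall strategy—induct on $s_\phi(\delta^*)$, invoke Lemma~\ref{lma:AB}, and in the extremal branch pass to a smaller graph with strictly smaller $s$—is the same as the paper's, and the base case is fine. The inductive step, however, has a real gap precisely where you flag it, and it is your decision to delete \emph{all} of $Z^*$ that creates it.

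The paper does not remove $Z^*$ entirely. It takes a proper subset $Z_1 \subseteq Z^*$ of size $(\delta^*-\tfrac12)n^* - |V_F|$ (with $V_F$ playing the role of your~$U$), sets $G' := G \setminus (Z_1 \cup V_F)$ so that $n' = (3/2-\delta^*)n^*$, and crucially keeps $Z_2 := Z^* \setminus Z_1$ inside $G'$. This buys two things simultaneously. First, since only about $(\delta^*-\tfrac12)n^*$ vertices are removed, every $w\in W^*$ retains degree at least $(2\delta^*-\tfrac12-o(1))n^* > (\delta'+\eps')n'$ in $G'$, so the $(\delta',4^{s-1}\eps)$-extremal structure on $G'$ is forced to put $W^*$ into $A'$ (vertices of $B'$ have small degree). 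Second, one checks $|Z_2\cap B'|$ is large; then for each $a\in A:=W^*\cap A'$ one can pick a common witness $z\in Z_2\cap B'$ with $c(az)=c_a^*$ (from the $W^*$--$Z^*$ structure in~$G$) and simultaneously $c(az)=c_a'$ (from the extremal structure in~$G'$), which forces $c_a^* = c_a'$. After that, $B := B'\cup Z_1$ gives the desired $(\delta^*,4^s\eps)$-extremal pair.

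In your version $G' = G\setminus(Z^*\cup U)$ there is no $Z_2$ left, hence no common witness: the colours $c_w^*$ live exclusively on edges you have deleted, and nothing ties them to the colours produced by the inductive extremal structure. This is not a bookkeeping issue—removing the whole of $Z^*$ discards exactly the information needed for the alignment. There is also a secondary problem you do not address: with $|Z^{**}|$ potentially as large as $(1-\delta^*+o(1))n^*$, the degree of $w\in W^*$ in $G'$ can fall to about $(4\delta^*-2)n^*$, which for $\delta^*<2/3$ need not exceed $(\delta'+\eps')n'$, so even $W^*\subseteq A''$ is not guaranteed. The fix is the paper's: remove only enough of $Z^*$ to push $\delta'$ into a lower stratum, and retain the remainder as your alignment witnesses.
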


\begin{proof}
Fix $\delta^*$ and write $s^*$ and $\alpha$ for $s_{\phi}(\delta^*)$ and $\alpha_{s_{\phi}(\delta^*)}$, respectively.
Without loss of generality, $\delta^* \le 2/3$.
Suppose that $G$ satisfies the hypothesis. 
Apply Lemma~\ref{lma:AB} to~$G$ with $\rho = \alpha_{s^*}/100$.
We may assume that Lemma~\ref{lma:AB}(b) holds or else we are done.
Thus there exist disjoint $W^*, Z^* \subseteq V(G)$ such that 
\begin{enumerate}[label={\rm(\roman*$'$)}]
	\item $ |W^*| \ge (\delta ^* - 7 \sqrt{ \alpha } ) n^* $ and $|Z^*| \ge ( 2 \delta ^* - 1 - 3  \alpha ^{1/4} ) n^*$;
	\item for each $ w \in W^*$, there exists a distinct colour $c_w^*$ such that there are at least $|Z^*| - 3\sqrt{\alpha} n^* $ vertices $z \in Z^* \cap N_G(w)$ such that $c(zw) = c_w^*$;
	\item for each $z \in Z^*$, $d_G(z) \le ( \delta^* + 4  \alpha^{1/4} ) n^* $ and there are at least $( \delta^*  - 6  \alpha  ^{1/4} ) n^*$ edges $zw$ such that $w \in W^* \cap N_G(z)$ and $c(zw) = c_w^*$.
\end{enumerate}

First suppose that $s^*=0$.
Since $\delta^* \ge 2/3 - \phi$ and $\alpha, \phi \ll \eps$, (i$'$) implies that
\begin{align*}
 |Z^*| \ge  ( 2 \delta^* - 1  - 3 \alpha^{1/4} ) n^*
=  ( 1 - \delta^* + (3 \delta^* - 2)  - 3 \alpha^{1/4} ) n^*
\ge ( 1 - \delta^* - \eps ) n^*.
\end{align*}
Thus $G$ is $(\delta^*, \eps )$-extremal.
So we may assume that $s \ge 1$ and the lemma holds for all $s' < s$.

Let $F$ be the subgraph of $G$ induced by edges $zv$ such that $z \in Z^*$ and either $v \notin W^*$ or $v \in W^*$ with $c ( z v ) \ne c_v$.
Note that by~(iii$'$), $e(F) \le 10 \alpha^{1/4}   n^* |Z^*| $.
Let $V_F$ be the set of vertices~$v$ such that $d_F(v) \ge 5 \alpha^{1/8}   n^* $.
So $|V_F| \le 5 \alpha^{1/8}n^*$.
For any $w \in W^*$, (i$'$) and (ii$'$) imply that
\begin{align}
d_{G}(w) & \ge (d^c_G(w) - 1) + |Z^*| - 3 \sqrt{\alpha} n^* 
 \ge (3 \delta^* -1 - 4 \alpha^{1/4}) n^* . \label{eqn:degW^*}
\end{align}

We split the proof into two cases depending on the value of~$\delta^*$.

\medskip
\noindent \textbf{Case 1: $ \delta^* < \frac{3 ( 1- 15 \alpha^{1/8})}{5(1- 10 \alpha^{1/8})}$.} 
Let $Z_1$ be a subset of $Z^*$ of size $|Z_1|	 =  ( \delta^* - 1/2 ) n^*  - |V_F|$
and let $Z_2 := Z^* \setminus Z_1$. 
Note that by (i$'$),
\begin{align}
|Z_2| \ge (  \delta^* - 1/2   - 3 \alpha^{1/4} ) n^*. \label{eqn:Z2}
\end{align}
Let $G' := G \setminus (Z_1 \cup V_F)$.
We claim that
\begin{align}
	\delta^c ( G' ) \ge  ( \delta^* - 10 \alpha^{1/8} ) n^*  +1\label{eqn:d^c(G')}
\end{align}
If $v \in V \setminus W^*$, then  $d^c_{G}(v,Z_1 \cup V_F ) \le d_{G}(v,Z^*) + |V_F| \le d_F(v) + |V_F| \le  10 \alpha^{1/8} n^*$.
If $w \in W^*$, then by (ii$'$), $d^c_{G}(v,Z_1 \cup V_F ) \le d^c_{G}(w,Z^*) + |V_F| \le 1 + 3 \sqrt{\alpha} n^* + |V_F| \le 10 \alpha^{1/8} n^*$.
Hence \eqref{eqn:d^c(G')} holds.

Let 
\begin{align}
n'  := |G'| = (3 / 2- \delta^*) n^*  \quad and \quad
 \delta'  := \frac{ \delta ^* - 10 \alpha^{1/8} }{3/2 - \delta ^*} \ge  \frac{ \delta ^* - \phi }{3/2 - \delta ^*} .
 \nonumber
\end{align}
Note that  $s_{\phi} (\delta') < s^*$, $\alpha n^* \ll \alpha_{s_{\phi} (\delta')} n'$ and  $\delta^c ( G' ) \ge \delta' n'+1$.
Also,
\begin{align*}
 \frac{(3\delta'+ \alpha_{s_{\phi} (\delta')} /2) n'}2 =  \frac{ 3 (\delta ^* - 10 \alpha^{1/8} )n^* + \alpha_{s_{\phi} (\delta')}  n'/2 }  2    
>  \frac{3 ( \delta^* + \alpha / 2 )  n^*}{2}.
\end{align*}
By our assumption on~$\delta^*$, we have $(3\delta'+ \alpha'/2)n'/ 2 < n'$.  
Clearly, $|G'| \ge n^* /2 \ge 2^{s_{\phi} (\delta')} n$.
Let $\eps' : = 4^{s_{\phi} (\delta')} \eps $.
By induction hypothesis, we may assume that $G'$ is $(\delta', \eps' )$-extremal (or else we are done).
Thus there exist disjoint $A',B' \subseteq V(G')$ such that
\begin{enumerate}[label={\rm(A\arabic*$'$)}]
	\item $|A'| \ge (\delta'- \eps')  n' $ and $|B'| \ge (1-\delta' - \eps')n'$;
	\item for each $ a \in A'$, there exists a distinct colour $c_a'$ such that there are at least $|B'| -  \eps' n'$ vertices $b \in B'$ such that $c(ab) = c_a'$;
	\item for each $b \in B'$, $d_G(b) \le (\delta' +  \eps' ) n' $ and $b$ has at least $|A'|-  \eps' n'$ neighbours $a \in A'$ such that $c(ab) = c'_a$. 
\end{enumerate}
Let $U' := V(G') \setminus (A' \cup B')$, so $|U'| \le 2 \eps' n'$.
Recall that $W^* \subseteq V(G')$ and that $\eps' , \alpha \ll \delta^* -1/2$.
For any $w \in W^*$,  
\begin{align}
d_{G'}(w) & \ge d_G(w) - |Z_1 \cup  V_F| 
\overset{\eqref{eqn:degW^*}}{\ge }(3 \delta^* -1 - 4 \alpha^{1/4}) n^* - ( \delta^* - 1/2)n^* \nonumber \\
& = (2 \delta^* - 1/2 - 4 \alpha^{1/4})n^*  \ge (\delta^* +\eps' )n^* \ge (\delta' + \eps')n'. \nonumber
\end{align}
Therefore $W^* \cap B' = \emptyset$ by~(A3$'$).
Let $A := W^* \cap A'$.
So 
\begin{align}
	|A| \ge |W^*| - |U'| 
	\overset{\text{(i$'$)}}{\ge} ( \delta^* - 7 \sqrt{\alpha} ) n^* - 2 \eps' n' 
	\ge (\delta^* - 4^{s^*} \eps )n^*  \label{eqn:|A|}
\end{align}
and $|A' \setminus A| \le (\delta' +\eps') n'- |A| \le 2 \cdot 4^{s^*} \eps n^*$.
Since $Z_2 \cap W^*= \emptyset$, we have $Z_2 \cap A' \subseteq A \cap A' $.
Hence
\begin{align*}
	| Z_2 \cap B'| \ge |Z_2| - | Z_2 \cap A'| - |Z_2 \setminus ( A' \cup B') | 
	\ge |Z_2| - |A \cap A'| - |U'| 
	\overset{\eqref{eqn:Z2}}{>} 3\sqrt{\alpha} n^* + \eps' n'.
\end{align*}
Consider any $a \in A$.
By~(ii$'$) and~(A2$'$),  there exists vertex $z \in Z_2 \cap B'$ such that $c_a^* = c(az) = c'_a $.
Therefore we have $c_a^* = c'_a$ for all $a \in A$. 

Let $B : = B' \cup Z_1$.
Note that 
\begin{align}
	|B| &  = |V(G) \setminus (A' \cup U' \cup V(F)|
	\ge n^* - |A'| - |U'| - |V_F| 
		\ge  ( 1- \delta - 4^{s^*} \eps ) n. \label{eqn:|B|}
\end{align}
We now claim that $G$ is $(\delta, 4^{s^*} \eps)$-extremal with partition~$(A,B)$.
Note that \ref{itm:A1} holds by \eqref{eqn:|A|} and~\eqref{eqn:|B|}.
Statements (ii$'$) and (A2$'$) imply~\ref{itm:A2}.
Similarly, statements (iii$'$) and (A3$'$) imply~\ref{itm:A3}.

\medskip
\noindent \textbf{Case 2: $ \delta^* \ge \frac{3 ( 1- 15 \alpha^{1/8})}{5(1- 10 \alpha^{1/8})}$.} 
Note that $s^*=1$.
Case~2 is proved via a similar argument used in Case~1, where we let $Z_1$ be a subset of $Z^*$ of size $|Z_1|	 =  (1 - (3 \delta^* + \alpha/2) /2 )n^*  - |V_F|$.
\end{proof}

We now prove Lemma~\ref{lma:1-path-cycle} by choosing $\phi, \alpha_0, \alpha_1, \dots, \alpha_{ s_{ \phi } (\delta) }$ appropriately. 

\begin{proof}[Proof of Lemma~\ref{lma:1-path-cycle}]
Let $s_0 :=  s_0(\delta)$ and let $\eps' := 4^{-2s_0}\eps$.
Choose $\beta \ll \phi \ll \eps', \delta - 1/2$ such that $s_{\phi}(\delta) \le 2s_0$.
So $ 4^{s_{\phi}(\delta)} \eps'\le \eps$.
Next choose  $\beta < \alpha_{ s_{ \phi } (\delta) } \ll \alpha_{ s_{ \phi } (\delta) -1} \ll \dots \ll   \alpha_0 \ll \phi$.
Therefore, Lemma~\ref{lma:1-path-cycle2} with $\eps'$ playing the role of $\eps$ implies Lemma~\ref{lma:1-path-cycle}.
\end{proof}

\section*{Acknowledgments}

The author would like to thank the referee for their valuable comments.

\end{document}